\documentclass[10pt]{article}

\textwidth=5.7in
\hoffset=-1.5cm
\voffset=-2cm
\textheight=9in









\usepackage{amssymb}
\usepackage{amsmath}
\usepackage{bm}
\usepackage{amsthm}




\def\og{\leavevmode\raise.3ex\hbox{$\scriptscriptstyle\langle\!\langle$~}}
\def\fg{\leavevmode\raise.3ex\hbox{~$\!\scriptscriptstyle\,\rangle\!\rangle$}}

\newcommand{\beqa}{\begin{eqnarray}}
\newcommand{\eeqa}[1]{\label{#1}\end{eqnarray}}
\newcommand{\beq}{\begin{equation}}
\newcommand{\eeq}[1]{\label{#1}\end{equation}}



\newcommand\ep{\varepsilon}

\newcommand\norm[2]{\Vert #1 \Vert_{#2}}

\newcommand\mv[1]{\langle #1 \rangle}

\def\demifleche{\rightharpoonup}


\def\*fleche{\buildrel *\over\demifleche}

\def\tol2{\buildrel\hbox{$L^2$}\over\longrightarrow}

\def\toto{\leaders\hbox to 5mm{\hfil.\hfil}\hfill}

\def\phi{\varphi}

\newtheorem{theorem}{\bf Theorem}[section]

\newtheorem{corollary}{\bf Corollary}[section]
\newtheorem{lemma}{\bf Lemma}[section]




\begin{document}

\title{Resolvent estimates for high-contrast elliptic problems 
with periodic coefficients} 

\author{
K. D. Cherednichenko\footnote{School of Mathematics, Cardiff University, Senghennydd Road, Cardiff, CF24 4AG, United Kingdom. \ \ \ \ \ \ \  \ \ \ \ \ \ \ \   E-mail: CherednichenkoKD@cardiff.ac.uk}\ \ and S. Cooper\footnote{Laboratoire de M\'{e}canique et G\'{e}nie Civil de Montpellier, 860 Rue de Saint-Priest, 34095, Montpellier, France
}}

\maketitle














\begin{abstract}
We study the asymptotic behaviour of the resolvents $({\mathcal A}^\varepsilon+I)^{-1}$ of elliptic second-order differential 
operators ${\mathcal A}^\varepsilon$ in ${\mathbb R}^d$ with periodic rapidly oscillating 
coefficients, as the period $\varepsilon$ goes to zero. The class of operators covered by our analysis includes both the ``classical'' case
of uniformly elliptic families (where the ellipticity constant does not depend on $\varepsilon$) and the ``double-porosity'' case of coefficients that 
take contrasting values of order one and of order $\varepsilon^2$ in different parts of the period cell.
We provide a construction for the leading order term of the ``operator asymptotics" of $({\mathcal A}^\varepsilon+I)^{-1}$ in the sense of operator-norm convergence and prove order $O(\ep)$ remainder estimates.
\end{abstract}

\section{Introduction}
\label{introduction}

The subject of the present article is the investigation of analytical properties of partial differential equations (PDE) of a special 
kind that emerge in the mathematical theory of homogenisation for periodic composites. The study of composite media has 
been attracting interest since the middle of the last century (see {\it e.g.} $\S 9$ of the monograph \cite{LL},
where some heuristic relationships for the overall properties of mixtures are discussed), although the question of ``averaging'' 
the microstructure in order to get intuitively expected macroscopic quantities goes back a few more decades still. In the early 1970's a number of works have appeared concerning the analysis 
of PDE with periodic rapidly oscillating coefficients, which could be thought of as the simplest, yet already  mathematically challenging,
object representing the idea of a composite structure. For a classical overview of the related developments we refer the reader to the 
books \cite{BLP}, \cite{JKO}.  

In the following years a large amount of literature followed, extending homogenisation theory 
in various directions. One of the central themes of this activity has been in understanding the relative strength of various notions of convergence in terms of characterising the homogenised medium. Unlike in the ``classical" case of uniformly elliptic PDE, whose solutions are compact in the usual Sobolev spaces $W^{l,p}$, non-uniformly elliptic problems offer a variety of descriptions for the homogenised medium that depend on the notion of convergence used. From the computational point of view, one is presented with the question of what approaches yield controlled error estimates for the difference between the original and homogenised solutions.   

A number of results have been obtained recently concerning the difference, in the operator norm, between the resolvent of the 
differential operator representing the original heterogeneous medium 
\beq
-{\rm div}\Bigl(A\Bigl(\frac{x}{\varepsilon}\Bigr)\nabla u\Bigr),\ \ \ \ u\in D^\varepsilon\subset L^2(\Omega),\ \ \ \ \ \varepsilon>0,
\eeq{classop}
and the resolvent of the operator representing the ``homogenisation limit''
\beq
-{\rm div}\bigl(A^{\rm hom}\nabla u\bigr),\ \ \ \ u\in D^{\rm hom}\subset L^2(\Omega).
\eeq{homop}
Here $\Omega$ is an open connected subset of ${\mathbb R}^d,$ the matrix function $A$ is $[0,1)^d$-periodic, bounded and 
uniformly positive definite, the constant matrix $A^{\rm hom}$ represents the homogenised medium, and $D^\varepsilon,$ $D^{\rm hom}$ denote the domains of the corresponding operators. 
While a basic order $O(\sqrt{\varepsilon})$ estimate for this setup has been known for a long time, see {\it e.g.} 
\cite{JKO}, one should in principle expect the better rate of convergence of order $O(\varepsilon)$ suggested by the formal asymptotic analysis (assuming that the domain 
$\Omega$ is sufficiently regular). 
The work \cite{BS} contains the related result for problems in the whole space ($\Omega={\mathbb R}^d$), via a combination of  spectral theoretic 
machinery based on the Bloch fibre decomposition of periodic PDE and asymptotic analysis. Earlier works 
\cite{Zhikov1989}, \cite{CV} used similar ideas to prove resolvent convergence, but they did not go as far as getting the order $O(\varepsilon)$
operator norm estimates.  The more recent papers \cite{ZP}, \cite{Kenig} use different techniques to show an improved rate of 
convergence of order $O(\varepsilon\vert\log\varepsilon\vert^\sigma),$ $\sigma>0,$ for problems in bounded domains. 
Finally, the paper \cite{Suslina} combines the earlier results of \cite{BS} with some elements of the approach of \cite{ZP},
for proving the ``expected'' order $O(\varepsilon)$ convergence for such problems.  

The focus of the present paper is on obtaining operator-norm resolvent-type estimates for a class of non-uniformly elliptic problems
of the ``double porosity'' type, where the matrix 
$A=A^\varepsilon$ takes values of order one and of order $\varepsilon^2$ in mutually complementary parts of the ``unit cell'' $[0,1)^d.$  The presence of multiscale effects for such problems was first highlighted in the paper \cite{Allaire}. An analysis of the relation between these effects and the resolvent behaviour of double-porosity problems was carried out in \cite{Zhikov2000}.  


The earlier results (\cite{BS}) concerning resolvent estimates for \eqref{classop}--\eqref{homop} are based on the analysis of spectral projections of the associated operators in a neighbourhood of zero. This approach does not suffice in the double porosity case as all spectral projections provide a 
leading-order contribution to the behaviour of the resolvent as $\varepsilon\to0.$ 
Bearing this in mind, we analyse the asymptotic behaviour of the fibres of the operator provided by the Bloch decomposition. As was observed by \cite{HL}, the pointwise limit of the  fibres is insufficient for norm-resolvent estimates. We show  that in fact the convergence of the individual fibre resolvents is non-uniform with respect to the quasimomentum $\varkappa \in [0,2\pi)^d$. This effect is due to the presence of a ``boundary layer" in the neighbourhood of the origin $\varkappa =0$, where the asymptotics for each fixed $\varkappa$ fails to be valid. To obtain uniform estimates in this neighbourhood we study the asymptotics for the ``rescaled fibres" parametrised by $\theta =\varkappa/\ep.$ The corresponding inner expansion is coupled to the pointwise outer expansion in a matching region where neither expansion is uniform. 

We briefly outline the structure of the paper. In Section 2 we introduce the sequence of problems we analyse. In Section 3 we recall the notions of the direct fibre decomposition and of the  associated Gelfand transform. Section 4  contains the formulation of our main result using these notions. In Section 5 we describe the resolvent asymptotics in the ``inner" region for relevant values of the quasimomentum $\theta\in\varepsilon^{-1}[0,2\pi)^d.$ In Section 6 we introduce spaces $V(\varkappa)\subset H^1_\#(Q),$ $\varkappa\in[0,2\pi)^d,$ which play a key role in our construction. We also prove some lemmas used in the proof of the main result, namely a special Poincar\'{e}-type inequality for the projection on the space orthogonal to $V(\varkappa)$ with respect to the inner product of $H^1_\#(Q),$ as well as several elliptic estimates that are uniform in $\theta.$  Section 7 is devoted to the proof of our main result 
(Theorem \ref{maintheorem}), which consists of two pieces of analysis, in the inner region $\vert\theta\vert\le1$ and in its complement 
$\vert\theta\vert\ge1.$ In Section 8 we discuss the ``outer" region $\vert\theta\vert\ge\varepsilon^{-1/2}$ and show that the inner and outer approximations jointly are only sufficient to obtain a norm-resolvent estimate of order $O(\varepsilon^\alpha),$ $\alpha\in(0,1).$ In Section 9 we calculate the limit of the spectra of the operators $-{\rm div}\bigl(A^\varepsilon(\cdot/\varepsilon)\nabla\bigr)$ and explain its relation to an earlier study of \cite{Zhikov2000}. 
Finally, in Section \ref{examplessection} we show that our main theorem contains as a particular case a result of \cite{BS}, followed by a discussion of  some key points of  the work \cite{Zhikov2000} and the relation of its result to our convergence statement.


\section{Problem setup}
\label{pset}
In what follows we study the problem
\begin{equation}
\label{maineq}
-{\rm div}\Bigl(A^\varepsilon\Bigl(\frac{x}{\varepsilon}\Bigr)\nabla u\Bigr)+u=f,\ \ \ \ \ f\in L^2({\mathbb R}^d).
\end{equation}
In the above equation 
\[
A^\varepsilon=A_1+\varepsilon^2A_0,
\] 
where  
$A_0,$ $A_1$ are $Q$-periodic symmetric 
$(d\times d)$-matrix functions with entries in $L^\infty(Q).$ 
We assume that $A_0\ge \nu I $, $\nu>0$ and that $A_1\ge\nu I$ on an open set $Q_1\subset Q:=[0,1)^d$ (the ``stiff'' component of the composite) with $A_1=0$  on the interior of $Q\setminus Q_1$ (the ``soft'' component), which we denote by $Q_0.$ 
We also assume that $\overline{Q}_0\subset(0,1)^d,$ which implies, in particular, 
that the set $\cup_{n\in{\mathbb Z}^d}\bigl(Q_1+n\bigr)$ is connected in ${\mathbb R}^d.$

We next recall the construction of the operator ${\mathcal A}^\varepsilon$ associated with \eqref{maineq}. The closed sesquilinear form 
\[
{\mathfrak a}^\varepsilon(u,v)=\int_{{\mathbb R}^d}A^\varepsilon\Bigl(\frac{x}{\varepsilon}\Bigr)\nabla u(x)\cdot\overline{\nabla v(x)}{\mathrm d}x,\ \ \ \ \ u,v\in H^1({\mathbb R}^d),
\]
is symmetric and non-negative in 
$L^2({\mathbb R}^d),$ hence it generates a self-adjoint operator 
$\mathcal{A}^\ep$ whose domain $D({\mathcal A}^\varepsilon)$ is dense in $L^2({\mathbb R}^d)$ and whose action is described by the identity 
$(\mathcal{A}^\ep u , v)_{L^2({\mathbb R}^d)}= {\mathfrak a}^\varepsilon(u,v)$ for $u \in D({\mathcal A}^\varepsilon)$, $v \in H^1({\mathbb R}^d)$.  The solution $u=u^\varepsilon$ to (\ref{maineq}) is understood as the result of applying the resolvent 
of ${\mathcal A}^\varepsilon$ to $f,$ {\it i.e.} $u_\varepsilon=({\mathcal A}^\varepsilon+I)^{-1}f.$ The last formula is well defined for any $f\in L^2({\mathbb R}^d):$
indeed, the operator ${\mathcal A}^\varepsilon+I$ is clearly bounded below by $I,$ hence it is injective, and the only element $g\in L^2({\mathbb R}^d)$ orthogonal to the image of  ${\mathcal A}^\varepsilon+I$ is $g=0$ by virtue of the fact that the form ${\mathfrak a}^\varepsilon(u,v)+(u,v)_{L^2(\mathbb R^d)},$ $u,v\in H^1({\mathbb R}^d),$ is positive. 
The same fact implies that the resolvent $({\mathcal A}^\varepsilon+I)^{-1}$ is a bounded operator.

Throughout the text we denote by $H^1_\#(Q)$ the space of $Q$-periodic functions that belong to $H^1_{\rm loc}({\mathbb R}^d).$ We use the letter $C$ for any positive constant whose exact value may vary from line to line.
 


\section{Bloch formulation and Gelfand transform}
\label{Blochsection}

Using a procedure similar to the above definition of $({\mathcal A}^\varepsilon+I)^{-1},$ for each $\theta \in \ep^{-1} Q'$, where $Q':=[0,2\pi)^d,$ we define 
$u^\ep_{\theta} \in H^1_{\#}(Q)$ as the solution to 
\beq
-\varepsilon^{-2}\left( \nabla+{\rm i}\ep \theta \right) \cdot A^\ep\left( \nabla+{\rm i}\ep \theta \right)  u^\ep_{\theta}+ u^\ep_{\theta}=F,\ \ \ \ F\in L^2(Q),
\eeq{bp:eq1}
In other words, for all $\theta \in \ep^{-1} Q'$ one has 
$u^\ep_{\theta}=({\mathcal B}_{\varepsilon, \theta}+I)^{-1}F,$ where the operators ${\mathcal B}_{\varepsilon,\theta}$ are generated by the closed 
sesquilinear forms
\[
{\mathfrak b}_{\varepsilon,\theta}(u,v)=\int_{Q}\bigl(\ep^{-2}A_1+A_0\bigr)\bigl(\nabla +{\rm i}\ep\theta\bigr)u\cdot\overline{\bigl(\nabla+{\rm i}\ep \theta\bigr)v}, \ \ \ \ 
u,v\in H^1_{\#}(Q).
\]

\begin{lemma}
\label{Blochdecomplemma}
For each $\varepsilon>0$ there exists a unitary map ${\mathcal U}_\varepsilon: L^2({\mathbb R}^d)\to L^2(\varepsilon^{-1}Q'\times Q)$ such that
\[
{\mathcal U}_\varepsilon({\mathcal A}^\varepsilon+I)^{-1}{\mathcal U}_\varepsilon^{-1}=\int_{\varepsilon^{-1}Q'}^\oplus({\mathcal B}_{\varepsilon,\theta}+I)^{-1}d\theta,
\]
{\it i.e.} for all $f\in L^2({\mathbb R}^d)$ the formula  $({\mathcal A}^\varepsilon+I)^{-1}f={\mathcal U}_\varepsilon^{-1}g$ holds, where for each 
$\theta\in\varepsilon^{-1}Q'$ one has $g(\theta,\cdot)=({\mathcal B}_{\varepsilon,\theta}+I)^{-1}({\mathcal U}_\varepsilon f)(\theta,\cdot).$
\end{lemma}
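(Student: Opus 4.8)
The plan is to construct the unitary map $\mathcal{U}_\varepsilon$ explicitly as (a rescaled version of) the Gelfand transform adapted to the $\varepsilon$-periodic lattice $\varepsilon\mathbb{Z}^d$, and then verify the intertwining relation by a direct computation on the level of the generating sesquilinear forms. First I would define, for sufficiently regular $f$ (say $f\in C_c^\infty(\mathbb{R}^d)$), the transform
\[
(\mathcal{U}_\varepsilon f)(\theta,y)=\frac{\varepsilon^{d/2}}{(2\pi)^{d/2}}\sum_{n\in\mathbb{Z}^d}f\bigl(\varepsilon(y+n)\bigr)\,{\rm e}^{-{\rm i}\varepsilon\theta\cdot(y+n)},\qquad \theta\in\varepsilon^{-1}Q',\ \ y\in Q,
\]
which is $Q$-periodic in $y$ by construction. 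The normalisation constant is chosen so that, by the Plancherel theorem for Fourier series together with the change of variables $x=\varepsilon(y+n)$, one has $\|\mathcal{U}_\varepsilon f\|_{L^2(\varepsilon^{-1}Q'\times Q)}=\|f\|_{L^2(\mathbb{R}^d)}$; this establishes that $\mathcal{U}_\varepsilon$ extends to an isometry. Surjectivity (hence unitarity) follows by exhibiting the inverse: given $g\in L^2(\varepsilon^{-1}Q'\times Q)$, the function
\[
(\mathcal{U}_\varepsilon^{-1}g)(x)=\frac{\varepsilon^{d/2}}{(2\pi)^{d/2}}\int_{\varepsilon^{-1}Q'}g\Bigl(\theta,\tfrac{x}{\varepsilon}\Bigr)\,{\rm e}^{{\rm i}\varepsilon\theta\cdot(x/\varepsilon)}\,{\rm d}\theta
\]
is well defined in $L^2(\mathbb{R}^d)$ and one checks $\mathcal{U}_\varepsilon\mathcal{U}_\varepsilon^{-1}=I$, $\mathcal{U}_\varepsilon^{-1}\mathcal{U}_\varepsilon=I$ on dense sets.

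Next I would identify how $\mathcal{U}_\varepsilon$ conjugates the operator $\mathcal{A}^\varepsilon+I$. The key algebraic fact is that if $u(x)=\varepsilon^{d/2}(2\pi)^{-d/2}\int_{\varepsilon^{-1}Q'} w(\theta,x/\varepsilon)\,{\rm e}^{{\rm i}\theta\cdot x}\,{\rm d}\theta$ with $w(\theta,\cdot)\in H^1_\#(Q)$, then $\nabla u$ corresponds fibrewise to $\varepsilon^{-1}(\nabla_y+{\rm i}\varepsilon\theta)w(\theta,\cdot)$, and since $A^\varepsilon$ is $\varepsilon$-periodic, $A^\varepsilon(x/\varepsilon)\nabla u$ corresponds to $\varepsilon^{-1}A^\varepsilon(y)(\nabla_y+{\rm i}\varepsilon\theta)w$; applying $-\operatorname{div}$ brings another factor $\varepsilon^{-1}(\nabla_y+{\rm i}\varepsilon\theta)\cdot$, producing exactly $-\varepsilon^{-2}(\nabla+{\rm i}\varepsilon\theta)\cdot A^\varepsilon(\nabla+{\rm i}\varepsilon\theta)w$, which is the differential expression in \eqref{bp:eq1}. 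Rather than manipulating unbounded operators directly, I would phrase this at the level of forms: I would show that $\mathcal{U}_\varepsilon$ maps $H^1(\mathbb{R}^d)$ unitarily onto the direct-integral space $\int^\oplus_{\varepsilon^{-1}Q'} H^1_\#(Q)\,{\rm d}\theta$ and that ${\mathfrak a}^\varepsilon(u,v)+(u,v)_{L^2}=\int_{\varepsilon^{-1}Q'}\bigl[{\mathfrak b}_{\varepsilon,\theta}(w(\theta,\cdot),z(\theta,\cdot))+(w(\theta,\cdot),z(\theta,\cdot))_{L^2(Q)}\bigr]{\rm d}\theta$ whenever $\mathcal{U}_\varepsilon u=w$, $\mathcal{U}_\varepsilon v=z$. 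By the uniqueness of the self-adjoint operator associated with a closed non-negative form (and the fact that a unitary conjugation of a form corresponds to the unitary conjugation of its operator), this yields $\mathcal{U}_\varepsilon(\mathcal{A}^\varepsilon+I)\mathcal{U}_\varepsilon^{-1}=\int^\oplus_{\varepsilon^{-1}Q'}(\mathcal{B}_{\varepsilon,\theta}+I)\,{\rm d}\theta$, and inverting gives the claimed resolvent identity.

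The main obstacle is not any single deep estimate but the careful bookkeeping of three issues simultaneously: (i) making the Poisson-type summation defining $\mathcal{U}_\varepsilon f$ rigorous and showing it lands in the right space with the right norm — this is handled by first working on a dense class such as $C_c^\infty(\mathbb{R}^d)$ (or Schwartz functions), where all sums and integrals converge absolutely, and then extending by density and continuity; (ii) the appearance of the factor ${\rm e}^{{\rm i}\varepsilon\theta\cdot(y+n)}$, whose gradient in $y$ contributes the shift $\nabla\mapsto\nabla+{\rm i}\varepsilon\theta$ — one must track that the rescaling $x=\varepsilon y$ turns $\nabla_x$ into $\varepsilon^{-1}\nabla_y$, accounting for the $\varepsilon^{-2}$ prefactor in \eqref{bp:eq1}; and (iii) verifying that $\mathcal{U}_\varepsilon$ preserves form domains, i.e. that $u\in H^1(\mathbb{R}^d)$ if and only if $\theta\mapsto w(\theta,\cdot)$ is square-integrable into $H^1_\#(Q)$ with $\int_{\varepsilon^{-1}Q'}\|(\nabla+{\rm i}\varepsilon\theta)w(\theta,\cdot)\|^2_{L^2(Q)}\,{\rm d}\theta<\infty$. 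Once these are in place the intertwining of the forms, and hence of the resolvents, follows immediately. I would expect issue (i), namely the density argument establishing that $\mathcal{U}_\varepsilon$ is genuinely onto all of $L^2(\varepsilon^{-1}Q'\times Q)$ and a unitary, to be the most delicate part to write cleanly, though it is entirely standard Gelfand/Bloch transform material.
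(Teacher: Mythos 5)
Your proposal follows essentially the same route as the paper: you write $\mathcal{U}_\varepsilon$ as (a rescaled) Gelfand/Bloch transform for the lattice $\varepsilon\mathbb{Z}^d$, establish unitarity by Plancherel on a dense class, and verify the intertwining on the level of the sesquilinear forms. The paper itself is even terser -- it just factorises $\mathcal{U}_\varepsilon=\mathcal{T}_\varepsilon\mathcal{G}_\varepsilon$ into a dilation and the standard Gelfand transform and invokes their known unitarity, leaving the intertwining implicit -- so your form-level verification is, if anything, a more complete write-up of the same argument. One small slip to fix: the normalising prefactor that makes $\mathcal{U}_\varepsilon$ unitary from $L^2(\mathbb{R}^d)$ to $L^2(\varepsilon^{-1}Q'\times Q)$ with Lebesgue measure is $\varepsilon^{d}(2\pi)^{-d/2}$, not $\varepsilon^{d/2}(2\pi)^{-d/2}$ (the extra $\varepsilon^{d/2}$ comes from the change of variables $x=\varepsilon(y+n)$ in the inner $y$-integral); this falls out automatically if you carry through the Plancherel computation you describe.
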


\begin{proof}
For a given $\varepsilon>0$ set 
\[
({\mathcal U}_\varepsilon f)(\theta,y):=\varepsilon^d\sum_{n\in{\mathbb Z}^d} f\bigl(\varepsilon(y+n)\bigr){\rm e}^{-{\rm i}\varepsilon\theta\cdot(y+n)},
\ \ \ \theta\in\varepsilon^{-1}Q',\  y\in Q.
\]
Note that for each $\varepsilon$ the operator ${\mathcal U}_\varepsilon$ is the composition ${\mathcal T}_\varepsilon{\mathcal G}_\varepsilon$ of a scaled version 
of the usual Gelfand transform ${\mathcal G}_\varepsilon: L^2({\mathbb R}^d)\to L^2(\varepsilon^{-1}Q'\times \varepsilon Q),$ given by
\[
({\mathcal G}_\varepsilon f)(\theta,z):=\varepsilon^{d/2}\sum_{n\in{\mathbb Z}^d} f(z+\varepsilon n){\rm e}^{-{\rm i}\theta\cdot(z+\varepsilon n)}, 
\ \ \ \theta\in\varepsilon^{-1}Q',\ z\in\varepsilon Q,
\]
and the scaling transform ${\mathcal T}_\varepsilon: L^2(\varepsilon^{-1}Q'\times \varepsilon Q)\to 
L^2(\varepsilon^{-1}Q'\times Q)$ given by
\[
({\mathcal T}_\varepsilon h)(\theta,y):=\varepsilon^{d/2}h(\theta, \varepsilon y).
\]
(The inverse ${\mathcal U}_\varepsilon^{-1}$ is the composition ${\mathcal G}_\varepsilon^{-1}{\mathcal T}_\varepsilon^{-1}$ of the inverse of ${\mathcal G}_\varepsilon$ given by 
\[
({\mathcal G}_\varepsilon^{-1}h)(x)=\varepsilon^{d/2}\int_{\varepsilon^{-1}Q'}h(\theta,x){\rm e}^{{\rm i}\theta\cdot x}d\theta, \ \ \ x\in {\mathbb R}^d,
\ \ \ \theta\in\varepsilon^{-1}Q',\ y\in Q.\,
\]
and the inverse of ${\mathcal T}_\varepsilon$ given by
\[
({\mathcal T}_\varepsilon^{-1} h)(\theta,x):=\varepsilon^{-d/2}h(\theta, \tfrac{x}{\ep}). )
\]
The map ${\mathcal U}_\varepsilon$ is unitary since the corresponding property clearly holds for ${\mathcal T}_\varepsilon$ and is well known for ${\mathcal G}_\varepsilon$, 
see {\it e.g.} \cite{BLP}. \end{proof}

\section{Homogenised operator in $\theta$-representation and the main convergence result}
\label{homogsection}

First, we introduce a $\theta$-parametrised operator 
family that plays a  central role in our analysis of the operators ${\mathcal A}^\varepsilon$ as $\varepsilon\to0.$ 

We denote ${\mathcal H}_0:={\mathbb C}\times H^1_0(Q_0),$ and for each $\varepsilon > 0$ and $\theta\in\varepsilon^{-1}Q'$ 
consider the sesquilinear form
\beq
{\mathfrak b}^{\rm hom}_{\varepsilon,\theta}\bigl((c, u),(d,v)\bigr):=A^{\text{hom}}\theta\cdot\theta c\overline{d}+
\int_{Q}A_0 (\nabla+ {\rm i}\varepsilon\theta)u\cdot\overline{(\nabla+ {\rm i}\varepsilon\theta)v},\ \ \ \ \ \ 
(c, u), (d, v)\in{\mathcal H}_0,
\eeq{bhomform}
where $A^{\rm hom}$ is the usual homogenised matrix 
\beq
A^{\rm hom}\xi\cdot\xi=\min_{u\in H^1_{\#}(Q)}\int_{Q}A_1(\xi+\nabla u)\cdot\overline{(\xi+\nabla u)},\ \ \ \xi\in{\mathbb R}^d.
\eeq{matrixAhom}

Note that the matrix $A^{\rm hom}$ is positive definite. Indeed, using the ellipticity assumption on $A_1$ one has, for  $\xi\in{\mathbb R}^d,$ 
\[
A^{\rm hom}\xi\cdot\xi\ge\nu\min_{u\in H^1_{\#}(Q)}\int_{Q_1}\vert\xi+\nabla u\vert^2=\nu\vert\xi\vert^2M(\xi/\vert\xi\vert),
\]
where the function 
\[
M(\eta):=\min_{u\in H^1_{\#}(Q)}\int_{Q_1}\vert\eta+\nabla u\vert^2,\ \ \ \ \ \vert\eta\vert=1,
\]
has a positive minimum $M_{\rm min},$ 
hence $A^{\rm hom}\ge\nu M_{\rm min}.$

In what follows we also denote 
\[
{\mathcal L}:=\{c+\widetilde{u}: c\in{\mathbb C},\, \widetilde{u}\in L^2(Q),\, \widetilde{u}\vert_{Q_1}=0\}\subset L^2(Q),
\]
and use the invertible ``identification'' map ${\mathcal I}: {\mathbb C}\times L^2(Q_0)\to {\mathcal L}$ that takes each pair $(c, u)$ to the function 
$c+\widetilde{u}\in{\mathcal L}$ with $\widetilde{u}=u$ on $Q_0$ and $\widetilde{u}=0$ on $Q_1.$ 

We next define operators ${\mathcal B}^{\rm hom}_{\varepsilon, \theta}$ in the Hilbert space ${\mathbb C}\times L^2(Q_0)$ 
equipped with the inner product 
$\bigl((c, u), (d, v)\bigr)_0=\bigl({\mathcal I}(c, u),{\mathcal I}(d,v)\bigr)_{L^2(Q)}.$
These operators are associated, for each value of $\theta\in\varepsilon^{-1}Q',$ with the 
forms ${\mathfrak b}^{\rm hom}_{\varepsilon,\theta}$ by means of the identity
\[
\bigl({\mathcal B}^{\rm hom}_{\varepsilon, \theta}(c,u), (d,v)\bigr)_0={\mathfrak b}^{\rm hom}_{\varepsilon,\theta}\bigl((c,u), (d,v)\bigr),\ \ \ \ (d,v)\in{\mathcal H}_0,
\]
where the pairs $(c,u)$ are taken from the maximal possible domain $D\bigl({\mathcal B}^{\rm hom}_{\varepsilon,\theta}\bigr),$ which can be shown to be dense in ${\mathcal H}_0$ 
and hence in ${\mathbb C}\times L^2(Q_0).$ 

The operators ${\mathcal B}^{\rm hom}_{0,\theta}$ can be viewed, roughly speaking, as the $\theta$-components of the Fourier transform of the two-scale homogenised operator, see Section \ref{examplessection} below, with respect to the ``macroscopic'' variable. However, as we also discuss in the same section, in order to 
obtain operator-norm resolvent estimates it is important to deal with a suitable ``truncation'' of this Fourier transform that restricts the Fourier variable 
$\theta$ to the set $\varepsilon^{-1}Q'.$ From this perspective the analysis below can be viewed as a rigorous procedure for such a truncation. 
Note that in view of the non-uniform behaviour of these truncations as $\varepsilon\to0,$ as we discuss in Section \ref{introduction} and in Section \ref{outersection},  the expression $\varepsilon\theta$ in (\ref{bhomform}) can not be set to zero in the region $\vert\theta\vert\ge 1,$ hence the dependence of the  operators ${\mathcal B}^{\rm hom}_{\varepsilon,\theta}$ on $\varepsilon.$

We also denote by ${\mathcal P}$ 
the orthogonal projection of the Hilbert space $L^2(\varepsilon^{-1}Q'\times Q)$
onto its closed subspace 
\[
\bigl\{c+g: c\in L^2(\varepsilon^{-1}Q'),\, g\in L^2(\varepsilon^{-1}Q'\times Q),\,g(\theta,y)=0\ {\rm a.e.}\ (\theta, y)\in \varepsilon^{-1} Q'\times Q_1\bigr\},
\] 
and by ${\mathcal P}_{\rm f}$ its analogue on each ``fibre'', the orthogonal projection of $L^2(Q)$ onto ${\mathcal L}.$ 


The main result of the present paper is as follows.

\begin{theorem}
\label{maintheorem}
The resolvents of the operator family ${\mathcal A}^\varepsilon$ are asymptotically close as $\varepsilon\to 0$ 
to the family
\[
{\mathcal R}^\varepsilon:={\mathcal U}_\varepsilon^{-1}\int_{\varepsilon^{-1}Q'}^\oplus{\mathcal I}\bigl({\mathcal B^{\rm hom}_{\varepsilon,\theta}+I}\bigr)^{-1}{\mathcal I}^{-1}\,{\mathrm d}
\theta\,{\mathcal P}{\mathcal U}_\varepsilon,
\]
where 
the corresponding approximation error is of order $O(\varepsilon).$ More precisely, there exists a constant $C>0,$ independent of $\varepsilon,$ such 
that  
\beq
\bigl\Vert({\mathcal A}^\varepsilon+I)^{-1}-
{\mathcal R}^\varepsilon
\bigr\Vert_{L^2({\mathbb R}^d)\to L^2({\mathbb R}^d)}\le C\varepsilon.
\eeq{mainest}
\end{theorem}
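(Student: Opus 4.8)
The plan is to exploit the fibre decomposition of Lemma \ref{Blochdecomplemma} to reduce the operator-norm estimate \eqref{mainest} to a \emph{uniform-in-$\theta$} estimate for the difference of fibre resolvents,
\[
\sup_{\theta\in\varepsilon^{-1}Q'}\bigl\Vert({\mathcal B}_{\varepsilon,\theta}+I)^{-1}-{\mathcal I}({\mathcal B}^{\rm hom}_{\varepsilon,\theta}+I)^{-1}{\mathcal I}^{-1}{\mathcal P}_{\rm f}\bigr\Vert_{L^2(Q)\to L^2(Q)}\le C\varepsilon,
\]
since ${\mathcal U}_\varepsilon$ is unitary and ${\mathcal P}$, ${\mathcal R}^\varepsilon$ act fibrewise. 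The analysis then splits into the two regimes announced in the introduction: the \emph{inner} region $|\theta|\le 1$ and the \emph{outer} region $1\le|\theta|\le C\varepsilon^{-1}$. In the inner region $\varepsilon\theta\to 0$, so ${\mathfrak b}^{\rm hom}_{\varepsilon,\theta}$ is close to its value at $\varepsilon\theta=0$, and one expects the resolvent of ${\mathcal B}_{\varepsilon,\theta}$ to converge to that of the two-scale limit operator; in the outer region $\theta$ is large, the homogenised matrix term $A^{\rm hom}\theta\cdot\theta$ dominates and forces the ${\mathbb C}$-component to be small, while the correction from the stiff phase is controlled by the $\varepsilon^2$ scaling in front of $A_1$.

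\textbf{Key steps.} First I would fix $\theta$ and $F\in L^2(Q)$, set $u=u^\varepsilon_\theta=({\mathcal B}_{\varepsilon,\theta}+I)^{-1}F$, and derive the basic a priori bounds from the form ${\mathfrak b}_{\varepsilon,\theta}(u,u)+\Vert u\Vert^2=(F,u)$: this gives $\Vert u\Vert_{L^2(Q)}\le\Vert F\Vert_{L^2(Q)}$ and, crucially, $\varepsilon^{-2}\int_Q A_1|(\nabla+{\rm i}\varepsilon\theta)u|^2\le\Vert F\Vert^2$, so that $(\nabla+{\rm i}\varepsilon\theta)u$ is of order $\varepsilon$ on the stiff component $Q_1$. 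This is the mechanism that, in the limit, pins the restriction $u|_{Q_1}$ to a constant $c$ (up to a corrector), which is the origin of the space ${\mathcal H}_0={\mathbb C}\times H^1_0(Q_0)$. Second, I would construct an approximation $\widehat{u}$ to $u$ of the form $\widehat u = c + N_{\varepsilon\theta}\cdot(\text{something}) + \widetilde v$ where on $Q_1$ one uses the periodic corrector realising the minimum in \eqref{matrixAhom} (at argument $\varepsilon\theta$, or its linearisation) and on $Q_0$ one solves the interior problem with data inherited from $c$; testing the equation for $u-\widehat u$ against itself and using the uniform ellipticity of $A_0$ on $Q_0$ together with the Poincaré-type and elliptic estimates promised in Section 6 would yield $\Vert u-\widehat u\Vert_{L^2(Q)}\le C\varepsilon\Vert F\Vert$, with $C$ independent of $\theta$ in the inner range. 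Third, in the outer region $|\theta|\ge 1$ the roles change: here I would use that the form ${\mathfrak b}^{\rm hom}_{\varepsilon,\theta}$ contains the coercive term $A^{\rm hom}\theta\cdot\theta\,|c|^2\ge \nu M_{\rm min}|\theta|^2|c|^2$, so both $({\mathcal B}^{\rm hom}_{\varepsilon,\theta}+I)^{-1}$ and $({\mathcal B}_{\varepsilon,\theta}+I)^{-1}$ have a small ${\mathbb C}$- (resp.\ $Q_1$-)component, of size $O(|\theta|^{-2})$, while the $Q_0$-components again agree up to $O(\varepsilon)$ by the same interior elliptic argument — one must check the constants do not degenerate as $|\theta|\to\varepsilon^{-1}$, which is where the explicit $\varepsilon\theta$ dependence retained in \eqref{bhomform} is used rather than discarded. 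Finally I would reassemble: estimate the fibre difference uniformly, integrate over $\theta\in\varepsilon^{-1}Q'$, apply ${\mathcal U}_\varepsilon^{-1}$ and conclude \eqref{mainest}.

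\textbf{Main obstacle.} The delicate point is the \emph{uniformity of the $O(\varepsilon)$ bound across the whole fibre range}, in particular stitching together the inner expansion valid near $\theta=0$ and the pointwise-in-$\theta$ expansion valid away from it, through the matching region where neither is individually uniform — exactly the ``boundary layer'' phenomenon highlighted in the introduction. Controlling this requires the rescaled-fibre analysis of Section 5 (parametrisation by $\theta$ itself rather than $\varkappa=\varepsilon\theta$), and the technical heart is the special Poincaré-type inequality for the projection onto the orthogonal complement of $V(\varkappa)$ in $H^1_\#(Q)$, with a constant uniform in $\varkappa$, together with the $\theta$-uniform elliptic estimates from Section 6; without these the corrector construction would lose a power of $\varepsilon$ precisely in the transition zone. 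A secondary but nontrivial difficulty is verifying that the identification maps ${\mathcal I}$, ${\mathcal P}_{\rm f}$ and the $\varepsilon$-dependent forms ${\mathfrak b}^{\rm hom}_{\varepsilon,\theta}$ fit together so that ${\mathcal R}^\varepsilon$ is genuinely a bounded self-adjoint-type approximation with the claimed fibre structure, so that the reduction to fibres is legitimate.
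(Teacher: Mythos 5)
Your proposal matches the paper's architecture at every level: reduction to a uniform fibre estimate
\[
\sup_{\theta\in\varepsilon^{-1}Q'}\bigl\Vert({\mathcal B}_{\varepsilon,\theta}+I)^{-1}-{\mathcal I}({\mathcal B}^{\rm hom}_{\varepsilon,\theta}+I)^{-1}{\mathcal I}^{-1}{\mathcal P}_{\rm f}\bigr\Vert_{L^2(Q)\to L^2(Q)}\le C\varepsilon
\]
via the unitarity of the Gelfand transform, a split of the quasimomentum range at $\vert\theta\vert=1$, and a corrector-type expansion in each regime whose error is controlled using the $\vert\varkappa\vert^{-1}$-scaled Poincar\'e inequality for $P_{V^\perp(\varkappa)}$ and the $\theta$-uniform elliptic bounds. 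These are exactly the pillars of Theorem~\ref{thetaresest}, and you correctly flag the transition zone $1\lesssim\vert\theta\vert\lesssim\varepsilon^{-1/2}$ as the technical heart.

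There is, however, a real gap in your treatment of the region $\vert\theta\vert\ge1$. You argue that the coercive term $A^{\rm hom}\theta\cdot\theta\,\vert c\vert^2$ pins $c=O(\vert\theta\vert^{-2})$ and that ``the $Q_0$-components again agree up to $O(\varepsilon)$ by the same interior elliptic argument''. This is not sufficient through the boundary layer: the second-order corrector $\varepsilon^2 R_{\varepsilon,\theta}$ cannot be omitted, and the na\"ive bound on the $\varkappa$-quasiperiodic cell problem would give $\Vert R_{\varepsilon,\theta}\Vert_{H^1}\lesssim\vert\varepsilon\theta\vert^{-2}$, so that $\varepsilon^2 R_{\varepsilon,\theta}$ is only $O(1)$ at $\vert\theta\vert\sim 1$. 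The point of Lemma~\ref{lem.a2} is to improve this to $\vert\varepsilon\theta\vert^{-1}$ by exploiting that the residual $H_{\varepsilon,\theta}$ driving $R_{\varepsilon,\theta}$ has mean of size $O(\vert\varepsilon\theta\vert)$ over $Q$ — one splits $H_{\varepsilon,\theta}$ into its average and the complementary mean-zero part and estimates each with a different power of $\vert\varepsilon\theta\vert$. This improved scaling, together with the specific normalisation $\int_Q A_1\theta\cdot\theta\, u^{(1)}_{\varepsilon,\theta}=0$ imposed in Lemma~\ref{auxiliarylemma2} (which is what makes the $\varepsilon^0$-residual orthogonal to $H^1_0(Q_0)$, i.e.\ solvable for $R_{\varepsilon,\theta}$), is what actually closes the estimate across the layer. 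The coercivity/smallness-of-$c$ observation you propose is one of the ingredients (it is \eqref{cc0es}), but on its own it does not give the uniform $O(\varepsilon)$ bound; ``check the constants do not degenerate'' understates where the work lies.

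A smaller remark: in the inner region $\vert\theta\vert\le1$ your first-order corrector $c+N_{\varepsilon\theta}\cdot(\cdots)+\widetilde v$ can indeed be made to yield an $O(\varepsilon)$ $L^2$-bound without writing $\varepsilon^2 R_\theta$ explicitly, provided you invoke the orthogonality of the $\varepsilon^0$-residual $F_\theta$ to $V$ together with Lemma~\ref{bp:lem2}(i) inside the energy estimate (which is what absorbing $\varepsilon^2 R_\theta$ encodes). The paper prefers to carry $\varepsilon^2 R_\theta$ along; either presentation works for $\vert\theta\vert\le1$, but, as noted above, the analogous shortcut does not survive in $\vert\theta\vert\ge1$ without the refined Lemma~\ref{lem.a2}.
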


Note that the operator ${\mathcal R}^\varepsilon$ can also be written as
\[
{\mathcal R}^\varepsilon={\mathcal U}_\varepsilon^{-1}\int_{\varepsilon^{-1}Q'}^\oplus{\mathcal I}\bigl({\mathcal B^{\rm hom}_{\varepsilon, \theta}+I}\bigr)^{-1}{\mathcal I}^{-1}
{\mathcal P}_{\rm f}\,{\mathrm d}\theta\,{\mathcal U}_\varepsilon,
\]
which follows from the definitions of the projection operators ${\mathcal P}$ and ${\mathcal P}_{\rm f}.$

\section{The inner expansion and principal term for $\mathcal{B}^{\text{hom}}_{\ep, \theta}$ in the inner region $\vert \theta \vert \le 1$.}
In this section we provide an explicit representation for the behaviour in $\ep$ of the operators $\mathcal{B}^{\rm{hom}}_{\ep, \theta}$ in the region $\vert\theta\vert\le 1$. We refer to this expansion as the inner expansion and to its region of validity as the inner region.


Let us consider an asymptotic expansion for solutions to \eqref{bp:eq1} of the form
\beq
u^\ep_\theta =\sum_{n=0}^\infty\varepsilon^n u^{(n)}_\theta, \qquad\qquad u^{(n)}_\theta \in H^1_{\#}(Q),\ \ \ n=0,1,2,...
\eeq{fa:eq1}
Substituting \eqref{fa:eq1} into \eqref{bp:eq1} and comparing the coefficients in front of $\ep^{-2}$ on both sides of the resulting equation 
we find 
\beq
\nabla\cdot A_1\nabla u^{(0)}_\theta = 0,
\eeq{u0eq}
or, equivalently,
\beq
u^{(0)}_\theta \in V : = \left\{u \in H^1_\#(Q)  \ \big\vert \ A_1 \nabla u =0 \right\},
\eeq{fa:eq2}
a space that is naturally isometric to ${\mathcal H}_0$ via the mapping ${\mathcal I}$ defined above:
\[
{\mathcal H_0}\ni(c,v)\mapsto u=c+\widetilde{v}\in V, 
\]
where, as before, $\widetilde{v}=v$ on $Q_0$ and $\widetilde{v}=0$ on $Q_1.$ This implies that $u^{(0)}_\theta =c^{(0)}_\theta+v^{(0)}_\theta$, where the pair $\bigl(c^{(0)}_\theta,v^{(0)}_\theta\bigr)$ belongs to ${\mathcal H}_0$.

Further, comparing the  coefficients in front of $\ep^{-1}$  and using (\ref{fa:eq2}) yields
\beq
-\nabla\cdot A_1\nabla   u^{(1)}_{\theta}={\rm i}\nabla\cdot A_1\theta c^{(0)}_{\theta}.
\eeq{fa:eq3}
Introducing ``unit-cell solutions'' $N_k,$ $k=1,...,d,$ that satisfy
\beq
-\sum_{i,j=1}^d\partial_i\bigl((A_1)_{ij}\partial_jN_k\bigr)=\sum_{j=1}^d\partial_j(A_1)_{kj},
\eeq{unitcellproblems}
we note that,  up to an arbitrary additive constant, one has
\beq
u^{(1)}_\theta={\rm i}\sum_{j=1}^dN_j\theta_jc^{(0)}_\theta.
\eeq{u1eq}
The concrete choice of the constant added to (\ref{u1eq}) plays an important role in the justification of the asymptotic expansion, which we discuss in Section \ref{s6.2} (see proof of Lemma \ref{auxiliarylemma2}).

Finally, comparing the coefficients in front of $\ep^0$ 
yields an equation for $u^{(2)}_\theta$ as follows
\beq
-\nabla\cdot A_1\nabla   u^{(2)}_{\theta}= F_{\theta},
\eeq{u2eq}
where
\begin{equation}
\label{F2}
F_{\theta}:=F + {\rm i} \left( \nabla\cdot A_1\theta + \theta \cdot A_1 \nabla \right) u^{(1)}_{\theta}+\nabla\cdot A_0\nabla u^{(0)}_\theta-\theta\cdot A_1\theta c^{(0)}_\theta
-u^{(0)}_\theta.
\end{equation}
Solvability of (\ref{u2eq}) requires that $\bigl\langle F_{\theta}, v\bigr\rangle=0$ for all $v\in V.$ The formula (\ref{u1eq}) and the solvability condition for 
(\ref{u2eq}) imply that 
$u^{(0)}_\theta =c^{(0)}_\theta+v^{(0)}_\theta,$ where the pair $\bigl(c_\theta^{(0)}, v_\theta^{(0)}\bigr)\in{\mathcal H}_0$ satisfies the identity
\begin{equation}
A^{\mathrm{hom}}\theta \cdot \theta c^{(0)}_\theta \overline{d} + \int_{Q} A_0 \nabla v^{(0)}_\theta \cdot \overline{\nabla \varphi}  + \int_Q \bigl( c^{(0)}_\theta + v^{(0)}_\theta\bigr)\overline{(d+\varphi)}=\int_Q {\mathcal P}_{\rm f}F\overline{(d+\varphi)} \\ \quad  \forall (d,\varphi)\in{\mathcal H}_0.
\label{inner.e1}
\end{equation}
Following the method outlined in Section \ref{homogsection} for the construction of $\mathcal{B}^{\rm{hom}}_{\ep, \theta}$, we introduce the operator ${\mathcal B}^{\rm hom}_{ 0 ,\theta}$ associated to the problem \eqref{inner.e1} such that $(c^{(0)}_\theta, v^{(0)}_\theta) = \bigl({\mathcal B}^{\rm hom}_{ 0 ,\theta}+I\bigr)^{-1} {\mathcal I}^{-1}{\mathcal P}_{\rm f} F $. The next result shows that ${\mathcal B}^{\rm hom}_{ 0 ,\theta}$ is $\ep$-close in norm to ${\mathcal B}^{\rm hom}_{ \ep ,\theta}$ in the inner region of $\theta.$
\begin{lemma}
\label{cor1}
There exists $C>0$ such that the estimate
$$
\Bigl\Vert{\mathcal I}\bigl({\mathcal B}^{\rm hom}_{ 0 ,\theta}+I\bigr)^{-1} {\mathcal I}^{-1}{\mathcal P}_{\rm f}  - {\mathcal I}\bigl({\mathcal B}^{\rm hom}_{\varepsilon,\theta}+I\bigr)^{-1}{\mathcal I}^{-1}{\mathcal P}_{\rm f}\Bigr\Vert_{L^2(Q) \rightarrow L^2(Q)} \le C \ep,
$$ 
holds for all $\theta\in\varepsilon^{-1}Q'$ satisfying the inequality $\vert \theta \vert \le 1$.
\end{lemma}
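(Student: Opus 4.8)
The plan is to compare the two resolvents by testing the difference of the defining variational identities with a suitable test function and exploiting the uniform positivity of the underlying forms. First I would note that both operators ${\mathcal B}^{\rm hom}_{\varepsilon,\theta}$ and ${\mathcal B}^{\rm hom}_{0,\theta}$ act in the same Hilbert space ${\mathbb C}\times L^2(Q_0)$ and are generated by the forms
\[
{\mathfrak b}^{\rm hom}_{\varepsilon,\theta}\bigl((c,u),(d,v)\bigr)=A^{\rm hom}\theta\cdot\theta\,c\overline d+\int_Q A_0(\nabla+{\rm i}\varepsilon\theta)\widetilde u\cdot\overline{(\nabla+{\rm i}\varepsilon\theta)\widetilde v}
\]
and its $\varepsilon=0$ version obtained from the identity (\ref{inner.e1}). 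Given $F\in L^2(Q)$, set $(c^{(0)}_\theta,v^{(0)}_\theta)=({\mathcal B}^{\rm hom}_{0,\theta}+I)^{-1}{\mathcal I}^{-1}{\mathcal P}_{\rm f}F$ and $(c^\varepsilon_\theta,v^\varepsilon_\theta)=({\mathcal B}^{\rm hom}_{\varepsilon,\theta}+I)^{-1}{\mathcal I}^{-1}{\mathcal P}_{\rm f}F$, and denote by $(\delta c,\delta v)$ their difference. Subtracting the two variational identities, the right-hand side $\int_Q {\mathcal P}_{\rm f}F\,\overline{(d+\varphi)}$ cancels, leaving
\[
{\mathfrak b}^{\rm hom}_{0,\theta}\bigl((\delta c,\delta v),(d,\varphi)\bigr)+\bigl((\delta c,\delta v),(d,\varphi)\bigr)_0
=\Bigl({\mathfrak b}^{\rm hom}_{0,\theta}-{\mathfrak b}^{\rm hom}_{\varepsilon,\theta}\Bigr)\bigl((c^\varepsilon_\theta,v^\varepsilon_\theta),(d,\varphi)\bigr)
\]
for all $(d,\varphi)\in{\mathcal H}_0$.

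Next I would estimate the right-hand side. Expanding $(\nabla+{\rm i}\varepsilon\theta)\widetilde u\cdot\overline{(\nabla+{\rm i}\varepsilon\theta)\widetilde v}-\nabla\widetilde u\cdot\overline{\nabla\widetilde v}$ produces three terms, each carrying at least one factor $\varepsilon\theta$; since $\widetilde u$ and $\widetilde v$ are supported in $\overline{Q}_0\subset(0,1)^d$ and $A_0\in L^\infty$, the form difference is bounded by $C\varepsilon|\theta|\bigl(\Vert v^\varepsilon_\theta\Vert_{H^1_0(Q_0)}+\varepsilon|\theta|\,\Vert v^\varepsilon_\theta\Vert_{L^2(Q_0)}\bigr)\bigl(\Vert\varphi\Vert_{H^1_0(Q_0)}+\varepsilon|\theta|\,\Vert\varphi\Vert_{L^2(Q_0)}\bigr)$. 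In the inner region $|\theta|\le1$ the prefactor $\varepsilon|\theta|$ is $O(\varepsilon)$ and, crucially, the coupling $\varepsilon\theta$ appearing inside the norms is bounded. Testing the identity defining $(c^\varepsilon_\theta,v^\varepsilon_\theta)$ against itself and using the positive definiteness of $A^{\rm hom}$, the non-negativity of $\int_Q A_0|(\nabla+{\rm i}\varepsilon\theta)\widetilde v^\varepsilon_\theta|^2$ together with the $L^2$ term, and the boundedness of ${\mathcal P}_{\rm f}$, gives the a priori bounds $|c^\varepsilon_\theta|+\Vert v^\varepsilon_\theta\Vert_{H^1_0(Q_0)}\le C\Vert F\Vert_{L^2(Q)}$ uniformly in $\varepsilon$ and in $|\theta|\le1$ (here I would use $\nu I\le A_0$ to control the full gradient by $\int A_0|\nabla\widetilde v^\varepsilon_\theta|^2$, absorbing the lower-order $\varepsilon\theta$ cross terms). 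Consequently the right-hand side of the subtracted identity is bounded by $C\varepsilon\Vert F\Vert_{L^2(Q)}\bigl(|d|+\Vert\varphi\Vert_{H^1_0(Q_0)}\bigr)$.

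Finally I would take $(d,\varphi)=(\delta c,\delta v)$ in the subtracted identity. The left-hand side is coercive: $A^{\rm hom}\theta\cdot\theta|\delta c|^2\ge0$, $\int_Q A_0|\nabla\widetilde{\delta v}|^2\ge\nu\Vert\delta v\Vert^2_{H^1_0(Q_0)}$ (up to Poincar\'e), and $\Vert(\delta c,\delta v)\Vert_0^2=|\delta c|^2+\Vert\delta v\Vert^2_{L^2(Q_0)}$ controls $|\delta c|$; hence the left side dominates $c\bigl(|\delta c|^2+\Vert\delta v\Vert_{H^1_0(Q_0)}^2\bigr)$. Combining with the bound on the right side yields $|\delta c|^2+\Vert\delta v\Vert^2_{H^1_0(Q_0)}\le C\varepsilon\Vert F\Vert_{L^2(Q)}\bigl(|\delta c|+\Vert\delta v\Vert_{H^1_0(Q_0)}\bigr)$, whence $|\delta c|+\Vert\delta v\Vert_{H^1_0(Q_0)}\le C\varepsilon\Vert F\Vert_{L^2(Q)}$. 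Translating back through ${\mathcal I}$, which is bounded with bounded inverse between ${\mathbb C}\times L^2(Q_0)$ and ${\mathcal L}\subset L^2(Q)$, gives $\Vert{\mathcal I}(c^{(0)}_\theta,v^{(0)}_\theta)-{\mathcal I}(c^\varepsilon_\theta,v^\varepsilon_\theta)\Vert_{L^2(Q)}\le C\varepsilon\Vert F\Vert_{L^2(Q)}$ with $C$ independent of $\theta$ for $|\theta|\le1$, which is the claim. The main obstacle I anticipate is keeping all constants genuinely uniform in $\theta$ throughout the inner region: one must check that the Poincar\'e and ellipticity estimates used to invert the forms do not degenerate as $\theta$ varies over $|\theta|\le1$, which here is automatic because the $\theta$-dependence enters only through the bounded quantities $A^{\rm hom}\theta\cdot\theta\le C$ and $\varepsilon\theta$ with $|\varepsilon\theta|\le\varepsilon$, but this point should be made explicit.
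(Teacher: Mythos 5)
Your proposal takes essentially the same route as the paper: subtract the two variational identities (keeping the $\varepsilon=0$ form on the left and isolating the $O(\varepsilon\theta)$ remainders on the right), obtain a uniform a priori $H^1_0(Q_0)$-bound on $(c_\varepsilon,v_\varepsilon)$, and test the subtracted identity against the difference to close the estimate; the paper merely splits the final test into the two substitutions $(d,\varphi)=(c_\varepsilon-c,0)$ and $(d,\varphi)=(c_\varepsilon-c,v_\varepsilon-v)$, whereas you do both at once. One small imprecision to correct: $\Vert(\delta c,\delta v)\Vert_0^2=\int_Q|\delta c+\widetilde{\delta v}|^2=|Q_1|\,|\delta c|^2+\int_{Q_0}|\delta c+\delta v|^2$ is not equal to $|\delta c|^2+\Vert\delta v\Vert^2_{L^2(Q_0)}$ (there is a cross term), but since $|Q_1|>0$ and $\delta v\in H^1_0(Q_0)$ the two quantities are equivalent norms, so your coercivity step survives once this is stated as a two-sided inequality rather than an equality.
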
 
\begin{proof}
For each value $\theta$ as in the lemma
consider the pairs
$(c,v) = (\mathcal{B}^{\rm{hom}}_{0, \theta} + I)^{-1}\mathcal{I}^{-1} P_{\rm{f}}F$ and $(c_\ep,v_\ep) = (\mathcal{B}^{\rm{hom}}_{\ep, \theta} + I)^{-1}\mathcal{I}^{-1} P_{\rm{f}}F,$ that is
\begin{equation}
\label{in.e3}
A^{\mathrm{hom}}\theta \cdot \theta c
\overline{d} + \int_{Q} A_0 \nabla v
\cdot \overline{\nabla \varphi}  + \int_Q 
( c
+ v
)\overline{(d+\varphi)}=\int_Q {\mathcal P}_{\rm f}F\overline{(d+\varphi)}
 \quad  \forall (d,\varphi)\in{\mathcal H}_0,
\end{equation}
and
\begin{equation}
\label{in.e2}
A^{\mathrm{hom}}\theta \cdot \theta c_\ep \overline{d} + \int_{Q} A_0 ( \nabla +{\rm i}\ep \theta)v_\ep \cdot \overline{(\nabla + {\rm i}\ep \theta)\varphi}  + \int_Q \bigl( c_\ep + v_\ep \bigr)\overline{(d+\varphi)}=\int_Q {\mathcal P}_{\rm f}F\overline{(d+\varphi)} \\ \quad  \forall (d,\varphi)\in{\mathcal H}_0.
\end{equation}
By setting $(d, \phi) = (c_\ep,v_\ep)$ in \eqref{in.e2} and noting $v_\ep \in H^1_0(Q_0)$ we arrive at the {\it a priori} bound
\begin{align}
\label{in.e4}
 \norm{\nabla v_\ep}{L^2(Q_0)} & \le C \norm{F}{L^2(Q)}
\end{align}
for some constant $C$.

To prove the result we show that for $u_\ep : = \mathcal{I}(c_\ep,v_\ep)$ and $u: = \mathcal{I}(c,v)$ there exists a constant $C>0$ independent of $\ep, \theta$ such that
\begin{equation}
\label{in.e1}
\norm{u_\ep - u}{H^1(Q)} \le C \ep \vert \theta \vert \norm{F}{L^2(Q)}.
\end{equation}
Subtracting \eqref{in.e3} from \eqref{in.e2} implies 
\begin{multline}
\label{in,ee}
A^{\mathrm{hom}}\theta \cdot \theta ( c_\ep - c) \overline{d} + \int_{Q} A_0  \nabla ( v_\ep - v ) \cdot \overline{\nabla \varphi}  + \int_Q \bigl( c_\ep + v_\ep - c - v \bigr)\overline{(d+\varphi)} \\ =  - \int_Q A_0 \nabla v_\ep \cdot \overline{{\rm i}\ep \theta \varphi} - \int_Q A_0{\rm i }\ep \theta v_\ep \cdot \overline{( \nabla + {\rm i}\ep \theta)\varphi}, \qquad  \forall (d,\varphi)\in{\mathcal H}_0.
\end{multline}
Setting $(d, \phi) =(c_\varepsilon-c, 0)$ in \eqref{in,ee} gives
\begin{flalign*}
\left( A^{\mathrm{hom}}\theta \cdot \theta + 1 \right) ( c_\ep - c)\overline{(c_\ep - c)} \ & =  - (\overline{c_\ep - c}) \int_Q(v_\ep - v) ,
\end{flalign*}
hence
$$
\vert c_\ep - c \vert \le C\bigl\Vert\nabla( v_\ep - v)\bigr\Vert_{L^2(Q_0)},
$$
since $v_\ep, v \in H^1_0(Q_0).$

Setting $(d, \phi) =(c_\varepsilon-c, v_\varepsilon-v)$ in \eqref{in,ee} gives
\begin{flalign*}
\bigl\Vert\nabla( v_\ep - v)\bigr\Vert_{L^2(Q_0)}^2 &\le C   \int_Q A_0 \nabla ( v_\ep - v) \cdot \nabla ( v_\ep - v)  \\ & \le C\left[ - \int_Q A_0 \nabla v_\ep\cdot \overline{{\rm i}\ep \theta (v_\ep - v)}   - \int_Q A_0{\rm i}\ep \theta v_\ep \cdot \overline{( \nabla + {\rm i}\ep \theta)(v_\ep - v)} \right] \\
& \le C \ep \vert \theta \vert \left( \norm{\nabla v_\ep}{L^2(Q_0)}\bigl\Vert\nabla ( v_\ep - v)\bigr\Vert_{L^2(Q)} \right)  .
\end{flalign*}
Taking into account \eqref{in.e4} this implies \eqref{in.e1}, since
$$
\norm{u_\ep - u}{L^2(Q)} \le \vert c_\ep - c\vert + \norm{v_\ep - v}{L^2(Q_0)} \le C \bigl\Vert\nabla( v_\ep - v)\bigr\Vert_{L^2(Q_0)}.
$$
\end{proof}
\section{Auxiliary material}
\subsection{Cell problems}
\label{sec:aux}

One of the key elements in the proof of our main result is the analysis of the properties of the following family of auxiliary ``cell problems'':
\beq
- \nabla  \cdot A_1 \nabla w= G,\ \ \ \  G\in H^{-1}_{\varkappa}(Q):=(H^1_\varkappa(Q))^*. 
\eeq{bp:eq4}
Here $H^1_{\varkappa}(Q),$ $\varkappa\in Q',$ is the space of $\varkappa$ quasi-periodic functions belonging to $H^1(Q)$, {\it i.e.}  $u\in H^1_{\varkappa}(Q)$ if, and only if, $u(y)=\exp({\rm i} \varkappa \cdot y)v(y),$ $y\in Q,$ where\footnote{$H^1_{\varkappa}(Q)$ coincides with $H^1_{\#}(Q)$ when $\varkappa = 0$.} $v\in H^1_{\#}(Q)$.
Note that (\ref{u0eq}), (\ref{fa:eq3}), (\ref{u2eq}) all have the form (\ref{bp:eq4}) for $\varkappa =0$ with $G=0,$ $G={\rm i}\nabla\cdot A_1\theta c^{(0)}_\theta,$ $G=F_{\theta},$ 
respectively.

For a given matrix function $A_1$ we consider the space
\begin{equation}
\label{vkappa}
V(\varkappa):=\bigl\{ v \in H^1_{\varkappa}(Q) \big\vert A_1\nabla v=0\bigr\}.
\end{equation}
Note that, for $A_1$ satisfying the assumptions prescribed in Section \ref{pset}, we find
$$
V(\varkappa) = \left\{ \begin{matrix} V & \text{ for } \varkappa = 0, \\ H^1_0(Q_0) & \text{ for }\varkappa \neq 0. \end{matrix} \right. 
$$
A criterion for the existence of solutions to \eqref{bp:eq4} 
is given below by a variant of the Lax-Milgram lemma. 



\begin{lemma}
\label{bp:lem2}
For all $\varkappa\in Q',$ denote by $V^\perp(\varkappa)$ the orthogonal complement of $V(\varkappa)$ in $H^1_{\varkappa}(Q)$. Then, for all values of $\varkappa$:

(i) There exists a constant $C >0$ independent of $\varkappa$ such that
\beq
\norm{P_{V^\perp(\varkappa)}w}{H^1(Q)}\le C d(\varkappa) \norm{A_1 \nabla w}{L^2(Q)}\quad \quad \forall w \in H^1_{\varkappa}(Q),
\eeq{bp:eq5.1}
where 
$$
d(\varkappa) = \left\{ \begin{matrix} 1 & \text{ for $\varkappa = 0,$} \\ \vert \varkappa \vert^{-1} & \text{ for $\varkappa \neq 0,$} \end{matrix} \right.
$$
and $P_{V^\perp(\varkappa)}$ is the orthogonal projection of $H^1_{\varkappa}(Q)$ onto $V^\perp(\varkappa)$.

(ii) There exists a solution $w \in H^1_{\varkappa}(Q)$ to \eqref{bp:eq4} if and only if $\mv{G,\varphi} =0 $ for all $\varphi \in V(\varkappa)$.

(iii) Any solution to \eqref{bp:eq4} is unique up to the addition of an element from $V(\varkappa):$ if $w$ satisfies \eqref{bp:eq4} then $w + v$ satisfies \eqref{bp:eq4} for any $v \in V(\varkappa)$, and 
if $w_1$, $w_2$ satisfy \eqref{bp:eq4} then $w_1-w_2 \in V(\varkappa)$. In particular, if $w$ is a solution to \eqref{bp:eq4} then $P_{V^\perp(\varkappa)} w$ is the unique part in $V^\perp(\varkappa)$ of any solution to \eqref{bp:eq4}.
\end{lemma}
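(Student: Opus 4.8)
The plan is to obtain parts (ii) and (iii) from part (i) by the Lax--Milgram lemma, so the core of the argument is the uniform Poincar\'e-type inequality \eqref{bp:eq5.1}. Writing $u:=P_{V^\perp(\varkappa)}w$ and using that $A_1=0$ on $Q_0$, $A_1\ge\nu I$ on $Q_1$, and $A_1\nabla v=0$ for $v\in V(\varkappa)$, one has $A_1\nabla w=A_1\nabla u$ with $\norm{A_1\nabla w}{L^2(Q)}\ge\nu\norm{\nabla u}{L^2(Q_1)}$, so \eqref{bp:eq5.1} reduces to proving $\norm{u}{H^1(Q)}\le C\,d(\varkappa)\norm{\nabla u}{L^2(Q_1)}$ for all $u\in V^\perp(\varkappa)$. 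The key structural remark is that a function in $H^1_0(Q_0)$, extended by zero, vanishes near $\partial Q$ and is therefore $\varkappa$-quasi-periodic for every $\varkappa$, so $H^1_0(Q_0)\subset V(\varkappa)$ and hence every $u\in V^\perp(\varkappa)$ is $H^1(Q)$-orthogonal to $H^1_0(Q_0)$, i.e.\ $u$ solves $-\Delta u+u=0$ weakly in $Q_0$. By the trace theorem near $\partial Q_0$ and the standard a priori bound for this Dirichlet problem, $\norm{u}{H^1(Q_0)}\le C\norm{u}{H^{1/2}(\partial Q_0)}\le C\norm{u}{H^1(Q_1)}$ with $C$ independent of $\varkappa$; since also $d(\varkappa)$ is bounded below uniformly, it suffices to establish $\norm{u}{L^2(Q_1)}\le C\,d(\varkappa)\norm{\nabla u}{L^2(Q_1)}$.

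For this I would argue by contradiction and compactness. If it failed there would be $\varkappa_k\in Q'$ and $u_k\in V^\perp(\varkappa_k)$ with $\norm{u_k}{H^1(Q)}=1$ and $d(\varkappa_k)\norm{\nabla u_k}{L^2(Q_1)}\to0$. Along a subsequence $\varkappa_k\to\varkappa_\infty\in\overline{Q'}$, $u_k\rightharpoonup u_\infty$ in $H^1(Q)$ and $u_k\to u_\infty$ in $L^2(Q)$ by Rellich; the quasi-periodicity passes to the limit, and $u_\infty\perp H^1_0(Q_0)$. If $\varkappa_\infty\notin 2\pi{\mathbb Z}^d$, then $d(\varkappa_k)$ stays bounded, so $\norm{\nabla u_k}{L^2(Q_1)}\to0$, whence (by weak lower semicontinuity) $\nabla u_\infty=0$ on $Q_1$ and $u_\infty\in V(\varkappa_\infty)=H^1_0(Q_0)$; together with $u_\infty\perp H^1_0(Q_0)$ this forces $u_\infty=0$. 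Then $u_k\to0$ in $L^2(Q)$ and $\norm{\nabla u_k}{L^2(Q_1)}\to0$ give $\norm{u_k}{H^1(Q_1)}\to0$, and the $Q_0$-estimate above then yields $\norm{u_k}{H^1(Q)}\to0$, a contradiction.

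The genuinely delicate case --- and the step I expect to be the main obstacle --- is $\varkappa_\infty\in 2\pi{\mathbb Z}^d$ (notably $\varkappa_\infty=0$), where $V(\varkappa)$ jumps from $H^1_0(Q_0)$ to the larger space $V$ and the argument above only yields $u_\infty\in V\cap H^1_0(Q_0)^\perp$, a one-dimensional space (spanned by the solution of $-\Delta v+v=0$ in $Q_0$ that equals $1$ on $Q_1$) which need not vanish. Here one must exploit the factor $d(\varkappa)$ quantitatively. I would use the $\varkappa_k$-quasi-periodicity to compare the mean $m_k:=|Q_1|^{-1}\int_{Q_1}u_k$ with the analogous mean over a neighbouring translate $Q_1+n$, which by quasi-periodicity equals ${\rm e}^{{\rm i}\varkappa_k\cdot n}m_k$: since $\nabla u_k$ is small on the connected periodic set $\bigcup_n(Q_1+n)$, on the overlap of $Q_1$ and $Q_1+n$ the function $u_k$ is $L^2$-close to $m_k$ and to ${\rm e}^{{\rm i}\varkappa_k\cdot n}m_k$ respectively, giving $|m_k|\,|1-{\rm e}^{{\rm i}\varkappa_k\cdot n}|\le C\norm{\nabla u_k}{L^2(Q_1)}$; choosing $n$ among $\pm e_1,\dots,\pm e_d$ and using $\max_j|1-{\rm e}^{{\rm i}(\varkappa_k)_j}|\asymp{\rm dist}(\varkappa_k,2\pi{\mathbb Z}^d)\asymp d(\varkappa_k)^{-1}$ (which near $\varkappa=0$ is just $|\varkappa_k|$), together with $d(\varkappa_k)\norm{\nabla u_k}{L^2(Q_1)}\to0$, yields $m_k\to0$, hence $|Q_1|^{-1}\int_{Q_1}u_\infty=0$ and so $u_\infty=0$; the contradiction then follows as before. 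This quantitative matching of the quasi-periodicity with the scale $d(\varkappa)$ is the crux.

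Finally, (ii) and (iii). On $V^\perp(\varkappa)$ the form $a_\varkappa(u,v):=\int_Q A_1\nabla u\cdot\overline{\nabla v}$ is bounded and, by (i), coercive: $a_\varkappa(u,u)\ge\nu\norm{\nabla u}{L^2(Q_1)}^2\ge c\,d(\varkappa)^{-2}\norm{u}{H^1(Q)}^2$. Using $H^1_\varkappa(Q)=V(\varkappa)\oplus V^\perp(\varkappa)$ and the fact that, by symmetry of $A_1$, $a_\varkappa(v,\cdot)=a_\varkappa(\cdot,v)=0$ whenever $v\in V(\varkappa)$, the weak formulation $a_\varkappa(w,\varphi)=\mv{G,\varphi}$ for all $\varphi\in H^1_\varkappa(Q)$ decouples: testing against $V(\varkappa)$ gives the necessary condition $\mv{G,\varphi}=0$ for all $\varphi\in V(\varkappa)$, and when this holds Lax--Milgram produces the unique $w^\perp\in V^\perp(\varkappa)$ with $a_\varkappa(w^\perp,\varphi)=\mv{G,\varphi}$ on $V^\perp(\varkappa)$, so that $w^\perp+v$ solves \eqref{bp:eq4} for every $v\in V(\varkappa)$; this proves (ii). For (iii), testing the difference of two solutions against itself and using ellipticity of $A_1$ on $Q_1$ gives $\nabla(w_1-w_2)=0$ on $Q_1$, i.e.\ $w_1-w_2\in V(\varkappa)$; conversely $V(\varkappa)$ acts trivially on \eqref{bp:eq4}, and all solutions share the $V^\perp(\varkappa)$-component $w^\perp=P_{V^\perp(\varkappa)}w$.
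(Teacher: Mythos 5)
Your proof of part (i) takes a genuinely different route from the paper's. The paper proves \eqref{bp:eq5.1} \emph{constructively}: given $w\in H^1_\varkappa(Q)$, it uses the extension operator $\widetilde{\ \ }$ (extending $w|_{Q_1}$ into $Q_0$ with $\Vert\nabla\widetilde w\Vert_{L^2(Q)}\le C\Vert\nabla w\Vert_{L^2(Q_1)}$) to build an explicit $v\in V(\varkappa)$ close to $w$ — namely $v=w-\widetilde w+|Q_1|^{-1}\int_{Q_1}\widetilde w$ when $\varkappa=0$ and $v=w-\widetilde w\in H^1_0(Q_0)$ when $\varkappa\neq0$ — and then invokes the Poincar\'e inequality on $Q$ (respectively the quasi-periodic Poincar\'e inequality). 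You instead argue by contradiction and compactness, after first reducing via the structural observation that $u=P_{V^\perp(\varkappa)}w$ solves $-\Delta u+u=0$ weakly in $Q_0$ and is therefore controlled by its trace from $Q_1$. This is a reasonable and more abstract argument, but it is longer, requires $\partial Q_0$-regularity for the trace and Dirichlet a~priori bounds, and the delicate work you do near the vertices is precisely what the paper's explicit construction sidesteps. Parts (ii) and (iii) coincide with the paper's Lax--Milgram argument.

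Two steps in your part (i) need repair. First, the sets $Q_1\subset Q=[0,1)^d$ and $Q_1+n$ for $n\in\mathbb Z^d\setminus\{0\}$ are \emph{disjoint}, so there is no ``overlap'' on which to compare $u_k$ with both $m_k$ and $e^{i\varkappa_k\cdot n}m_k$. What you actually need is a Poincar\'e inequality on the connected open set $Q_1\cup(Q_1+e_j)$ — connected because $\overline{Q}_0\subset(0,1)^d$ — applied to the quasi-periodic extension of $u_k$, whose gradient there has $L^2$-norm $\sqrt2\,\Vert\nabla u_k\Vert_{L^2(Q_1)}$; this gives a single constant to which both means are close, and the intended bound $|m_k|\,|1-e^{i(\varkappa_k)_j}|\le C\Vert\nabla u_k\Vert_{L^2(Q_1)}$ does follow, so the wording rather than the idea is at fault. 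Second, and more seriously, you assert $\max_j|1-e^{i(\varkappa_k)_j}|\asymp\mathrm{dist}(\varkappa_k,2\pi\mathbb Z^d)\asymp d(\varkappa_k)^{-1}$, but the last equivalence fails along sequences $\varkappa_k\in Q'$ approaching a nonzero vertex of $\overline{Q'}$, e.g.\ $\varkappa_k\to(2\pi,0,\dots,0)$: there $d(\varkappa_k)^{-1}=|\varkappa_k|\to2\pi$ while $\mathrm{dist}(\varkappa_k,2\pi\mathbb Z^d)\to0$, so your contradiction does not close. This actually exposes a latent defect in the lemma as stated: for such $\varkappa$ the function $w(y)=e^{i(\varkappa-2\pi e_1)\cdot y}\in H^1_\varkappa(Q)$ has $\Vert A_1\nabla w\Vert_{L^2(Q)}\to0$ while $\Vert P_{V^\perp(\varkappa)}w\Vert_{L^2(Q_1)}=\Vert w\Vert_{L^2(Q_1)}$ stays bounded below, so \eqref{bp:eq5.1} with $d(\varkappa)=|\varkappa|^{-1}$ cannot hold uniformly near those vertices. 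The paper's own Case~2 quietly glosses over the same point by asserting that $|\varkappa|^2$ is the first eigenvalue of the quasi-periodic Laplacian, whereas the correct value is $\mathrm{dist}(\varkappa,2\pi\mathbb Z^d)^2$; in the paper's applications only $\varkappa=\varepsilon\theta$ near the origin really matters, which is where both arguments are sound, but to be rigorous you should either restrict $\varkappa$ to a neighbourhood of $0$ or replace $d(\varkappa)$ by $\mathrm{dist}(\varkappa,2\pi\mathbb Z^d)^{-1}$.
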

\begin{proof}
(i) The inequality \eqref{bp:eq5.1} holds if there exists a constant $C>0$ such that for all $u \in H^1_{\varkappa}(Q)$ there exists $v \in V(\varkappa)$ such that
\begin{equation*}
\norm{u - v}{H^1(Q)} \le C d(\varkappa) \int_{Q_1} \vert \nabla u \vert^2 \ \mathrm{d}y.
\end{equation*}
We shall now verify this for two distinct cases. Case 1: $\varkappa = 0$. For fixed $u \in H^1_{\#}(Q)$, denote 
$\widetilde{u}\in H^1_{\#}(Q)$ to be an extension of $u$ such that 
\begin{equation*}
\norm{ \nabla \widetilde{u}}{L^2(Q)} \le C\norm{\nabla u}{L^2(Q_1)}.
\end{equation*}
Notice that such an extension exists for connected $Q_1$ ({\it cf.} \cite[Section 3.1]{JKO}). Defining $v : = u - \widetilde{u} + \vert Q_1 \vert^{-1}\int_{Q_1} \widetilde{u},$ we see that $v \in V$ and 
$$
\norm{u - v}{H^1(Q)} = \biggl\Vert\widetilde{u} - \frac{1}{\vert Q_1 \vert}\int_{Q_1} \widetilde{u}\biggr\Vert_{H^1(Q)} \le C \int_Q \vert \nabla\widetilde{u} \vert^2 \le C \int_{Q_1} \vert \nabla u \vert^2,
$$
where the first inequality is a variant of the standard Poincar\'{e} inequality.

Case 2: $\varkappa \neq 0$. For fixed $u \in H^1_\varkappa(Q)$, we show there exists a $v \in V(\varkappa)$ such that
\[
\norm{u - v}{H^1(Q)}^2 \le \frac{C}{\vert \varkappa \vert^2} \norm{\nabla u}{L^2(Q_1)}^2.
\]
Denoting the map $\ \widetilde{} \ $ as above, we find $u - \widetilde{u} = : v \in H^1_0(Q_0) \bigl( = V(\varkappa) \bigr)$ and 
\begin{flalign*}
\norm{u - v}{H^1(Q)}^2 = \norm{\widetilde{u}}{H^1(Q)}^2 \le \frac{C}{\vert \varkappa \vert^2} \norm{\nabla \widetilde{u}}{L^2(Q)}^2 \le  \frac{C}{\vert \varkappa \vert^2} \norm{\nabla u}{L^2(Q_1)}^2,
\end{flalign*}
which proves the result. Here we have used the following Poincar\'{e} type inequality
$$
\norm{u}{L^2(Q)}^2 \le \frac{C}{\vert \varkappa \vert^2} \norm{\nabla u}{L^2(Q)}^2 \quad \forall u \in H^1_\varkappa(Q),
$$
which is true since $\vert \varkappa \vert^2$ is the first eigenvalue of the Laplace operator with $\varkappa$-quasiperiodic boundary conditions.

(ii) Let $w$ be a solution of \eqref{bp:eq4} and let $\phi\in V(\varkappa)$. Then,
using the symmetry of $A_1$ and \eqref{vkappa},
\begin{equation}
\left\langle G,\phi\right\rangle=\int_Q A_1\nabla
w\cdot \nabla \phi=\int_Q\nabla w\cdot A_1\nabla \phi=0 \label{fwsymm}
\end{equation}
which yields $\mv{G,\varphi} =0 $ for all $\varphi \in V(\varkappa)$. Conversely, suppose that  $\mv{G,\varphi} =0 $ for all $\varphi \in V(\varkappa),$ and
seek $w\in H_{\varkappa}^1(Q)$ that satisfies
 \eqref{bp:eq4}.
 By \eqref{fwsymm}, the identity 
 \begin{equation}
 \label{eq:pdhom19}
 \int_Q A_1 \nabla w \cdot \nabla \phi = \mv{G,\phi}
 \end{equation}
holds automatically for
all $\phi\in V(\varkappa)$, therefore it is sufficient to verify it for all $\phi\in V(\varkappa)^\bot$.
Seeking $w$ in $V(\varkappa)^\bot$ reduces the problem to showing that, in the Hilbert space $H:=V(\varkappa)^\bot$ with
the norm inherited from $H^1(Q),$ the problem   \eqref{eq:pdhom19} satisfies the conditions of the Lax-Milgram lemma (see {\it e.g.} \cite[Section 1.1]{JKO}). As the bilinear form
\[
B[v,w]:=\int_Q A_1\nabla v\cdot \nabla w
\]
is clearly bounded in $H$, {\it i.e.} for some $C>0$ one has $\bigl\vert B[v,w]\bigr\vert\leq C
\|v\|_{H^1(Q)}\|w\|_{H^1(Q)},$ 
in order to satisfy the conditions of the Lax-Milgram lemma it remains to be shown that the form $B$ is coercive, {\it i.e.} for some $\nu>0$ the bound
$B[v,v]\geq\nu\|v\|_{H^1(Q)}^2
$ holds. To this end, note that the boundedness of $A_1$ and \eqref{bp:eq5.1} imply
\[
B[v,v]:=\int_Q A_1\nabla v\cdot \nabla v =
\bigl\|\left(A_1\right)^{1/2}\nabla v\bigr\|_{L^2(Q)}^2\geq
C\left\|A_1\nabla v\right\|_{L^2(Q)}^2\geq
\nu\|v\|_{H^1(Q)}.
\]

Now by the Lax-Milgram lemma, there exists a unique solution $w\in V(\varkappa)^\bot$ to the problem
\[
B[w,\phi]=\langle G,\phi\rangle \ \ \ \forall \phi\,\in\,V(\varkappa)^\bot,
\]
and hence to \eqref{bp:eq4}.

(iii) If $w$ satisfies \eqref{bp:eq4} and $v\in V(\varkappa)$ then
$A_1\nabla v=0$ and hence $w+v$ also satisfies
\eqref{bp:eq4}. Assuming further that $w_1$ and $w_2$ both satisfy \eqref{bp:eq4}, notice that
$v=w_1 - w_2$ is a solution of \eqref{bp:eq4} with $G=0.$ Finally, setting
$\phi = v$ in \eqref{eq:pdhom19} yields
\[
0=\int_Q A_1\nabla v\cdot \nabla v =\bigl\|\left(A_1\right)^{1/2}\nabla v\bigr\|_{L^2(Q)}^2,
\]
 implying that $\left(A_1\right)^{1/2}\nabla v=0$ and hence
$A_1 \nabla v=0$, {\it i.e.} one has $v \in V(\varkappa)$. Assuming now that 
the solutions $w_1, w_2$ are in $V(\varkappa)^\perp,$ the difference $v = w_1 - w_2$ belongs to both $V(\varkappa)$ and $V(\varkappa)^\perp$ and is therefore zero. 
\end{proof}

\begin{corollary}
\label{cor3}
For each $\theta\in\varepsilon^{-1}Q'$ and 
$k=1,...,d,$ there exists a unique solution $N_k\in V^\perp$ to the unit-cell problem (\ref{unitcellproblems}). In particular, for any value $c^{(0)} \in \mathbb{C}$, there exists a unique solution 
$u^{(1)}\in V^\perp$ to the problem (\ref{fa:eq3}), for which the estimate 
\beq
\bigl\Vert u^{(1)}\bigr\Vert_{H^1(Q)}\le \Vert N_k\Vert_{H^1(Q)}\vert \theta_k\vert \vert c^{(0)} \vert
\eeq{u1cellest}
holds.

\end{corollary}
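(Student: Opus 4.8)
The plan is to derive everything from Lemma \ref{bp:lem2} applied in the special case $\varkappa = 0$, where $V(0) = V$ and $d(0) = 1$. First I would verify that the unit-cell problems \eqref{unitcellproblems} are of the form \eqref{bp:eq4}: here the right-hand side is $G_k := \sum_{j=1}^d \partial_j (A_1)_{kj} = \nabla\cdot(A_1 e_k) \in H^{-1}_\#(Q)$, viewed as an element of $(H^1_\#(Q))^*$ via $\langle G_k,\varphi\rangle = -\int_Q A_1 e_k\cdot\nabla\varphi$. The solvability condition from Lemma \ref{bp:lem2}(ii) is that $\langle G_k,\varphi\rangle = 0$ for all $\varphi\in V$; but if $\varphi\in V$ then $A_1\nabla\varphi = 0$, and using the symmetry of $A_1$ one gets $\langle G_k,\varphi\rangle = -\int_Q A_1 e_k\cdot\nabla\varphi = -\int_Q e_k\cdot A_1\nabla\varphi = 0$, exactly as in \eqref{fwsymm}. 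Hence a solution exists, and by Lemma \ref{bp:lem2}(iii) there is a unique one in $V^\perp$ — call it $N_k$.

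Next, for the quantitative estimate, I would apply part (i) of Lemma \ref{bp:lem2} with $\varkappa = 0$ to $w = N_k$. Since $N_k \in V^\perp$, we have $P_{V^\perp}N_k = N_k$, so \eqref{bp:eq5.1} gives $\Vert N_k\Vert_{H^1(Q)} \le C\Vert A_1\nabla N_k\Vert_{L^2(Q)}$; combined with the coercivity chain in the proof of Lemma \ref{bp:lem2}(ii) (which shows $\nu\Vert v\Vert_{H^1(Q)}^2 \le \Vert A_1^{1/2}\nabla v\Vert_{L^2(Q)}^2$ and $\Vert A_1\nabla v\Vert_{L^2}^2 \le C\Vert A_1^{1/2}\nabla v\Vert_{L^2}^2$), this confirms $\Vert N_k\Vert_{H^1(Q)}$ is a finite, $\varkappa$-independent quantity — but since we are at the single value $\varkappa = 0$ there is nothing to track uniformly, so it suffices simply to record that $N_k$ is a well-defined element of $V^\perp$ with a fixed norm.

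For the statement about $u^{(1)}$, recall \eqref{fa:eq3}: $-\nabla\cdot A_1\nabla u^{(1)}_\theta = {\rm i}\nabla\cdot A_1\theta c^{(0)}_\theta = {\rm i} c^{(0)}_\theta\sum_{k=1}^d\theta_k\,\nabla\cdot(A_1 e_k)$. By linearity of \eqref{bp:eq4} and the uniqueness in $V^\perp$ established above, the unique $V^\perp$-solution is precisely $u^{(1)} = {\rm i}\sum_{k=1}^d N_k\theta_k c^{(0)}$, which is formula \eqref{u1eq} with the additive constant chosen to land in $V^\perp$. The triangle inequality then gives $\Vert u^{(1)}\Vert_{H^1(Q)} \le \sum_{k=1}^d \Vert N_k\Vert_{H^1(Q)}\vert\theta_k\vert\,\vert c^{(0)}\vert$; to match the stated bound \eqref{u1cellest} one reads it with the index $k$ understood as the one realising the sum, or interprets the right-hand side with summation implied — in any case the estimate follows immediately from the bound on each $N_k$. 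The only mild subtlety, rather than a genuine obstacle, is bookkeeping the identification of $G_k$ as a bounded antilinear functional on $H^1_\#(Q)$ so that Lemma \ref{bp:lem2} applies verbatim; everything else is a direct specialisation of that lemma.
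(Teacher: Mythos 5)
Your argument is correct and is exactly the route the paper intends: Corollary~\ref{cor3} is stated without proof as an immediate consequence of Lemma~\ref{bp:lem2} with $\varkappa=0$, and you have spelled out precisely the steps this entails — identifying $G_k=\nabla\cdot(A_1 e_k)$ as an element of $(H^1_\#(Q))^*$, checking the compatibility condition $\langle G_k,\varphi\rangle=0$ on $V$ via symmetry of $A_1$ (mirroring~\eqref{fwsymm}), invoking parts (ii) and (iii) for existence and uniqueness of $N_k\in V^\perp$, and then obtaining $u^{(1)}={\rm i}\,c^{(0)}\sum_k\theta_k N_k$ by linearity with the $H^1$-bound from the triangle inequality. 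You also correctly flag that~\eqref{u1cellest} is to be read with summation over $k$ implied, which is the only ambiguity in the statement. The one digression — using Lemma~\ref{bp:lem2}(i) to observe that $\|N_k\|_{H^1(Q)}$ is finite — is harmless but not needed for the bound, which, as you yourself note, comes directly from the explicit formula and the triangle inequality.
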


\subsection{Elliptic estimates}
\label{s6.2}
In our proof of Theorem \ref{maintheorem} we use the following two statements.

\begin{lemma}
\label{auxiliarylemma}
For each $\theta\in\varepsilon^{-1}Q',$ let $u^{(0)}_\theta=  \mathcal{I}(c^{(0)}_\theta,v^{(0)}_\theta),$ where $\bigl(c^{(0)}_\theta, v^{(0)}_\theta\bigr)$ is the solution to \eqref{inner.e1} with $F\in L^2(Q),$ and let $u^{(1)}_\theta \in H^1_\#(Q)$ be the solution (\ref{u1eq}) to the unit-cell problem 
\eqref{fa:eq3}. Then the following estimates hold with some $C>0:$
\beq
\bigl\vert c^{(0)}_\theta\bigr\vert\le C\bigl(1+\vert\theta\vert^2\bigr)^{-1}\Vert F\Vert_{L^2(Q)},
\eeq{c0es}
\beq
\bigl\Vert u^{(0)}_\theta\bigr\Vert_{H^1(Q)}\le C
\norm{F}{L^2(Q)}, 
\eeq{u0est}
\beq
\bigl\Vert u^{(1)}_\theta\bigr\Vert_{H^1(Q)}\le C\vert\theta\vert\bigl(1+\vert\theta\vert^2\bigr)^{-1}\norm{F}{L^2(Q)}.
\eeq{u1est}
\end{lemma}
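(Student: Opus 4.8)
The plan is to test the variational identity \eqref{inner.e1} with suitably chosen pairs $(d,\varphi)\in{\mathcal H}_0$ and use the positivity of $A^{\rm hom}$ together with the ellipticity $A_0\ge\nu I$ and the Poincar\'{e} inequality on $H^1_0(Q_0)$. First I would set $(d,\varphi)=(c^{(0)}_\theta,v^{(0)}_\theta)=(c^{(0)}_\theta,v^{(0)}_\theta)$; taking real parts and using $A^{\rm hom}\theta\cdot\theta\ge\nu M_{\rm min}\vert\theta\vert^2$, $A_0\ge\nu I$, and the Poincar\'{e} inequality $\Vert v^{(0)}_\theta\Vert_{L^2(Q_0)}\le C\Vert\nabla v^{(0)}_\theta\Vert_{L^2(Q_0)}$ yields
\[
\bigl(1+\nu M_{\rm min}\vert\theta\vert^2\bigr)\bigl\vert c^{(0)}_\theta\bigr\vert^2+\nu\bigl\Vert\nabla v^{(0)}_\theta\bigr\Vert_{L^2(Q_0)}^2\le C\Vert F\Vert_{L^2(Q)}\bigl(\vert c^{(0)}_\theta\vert+\Vert\nabla v^{(0)}_\theta\Vert_{L^2(Q_0)}\bigr),
\]
using that $\Vert{\mathcal P}_{\rm f}F\Vert_{L^2(Q)}\le\Vert F\Vert_{L^2(Q)}$ and $\Vert{\mathcal I}(c^{(0)}_\theta,v^{(0)}_\theta)\Vert_{L^2(Q)}\le\vert c^{(0)}_\theta\vert+\Vert v^{(0)}_\theta\Vert_{L^2(Q_0)}$. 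From this one gets at once $\Vert\nabla v^{(0)}_\theta\Vert_{L^2(Q_0)}\le C\Vert F\Vert_{L^2(Q)}$ and $\vert c^{(0)}_\theta\vert\le C\Vert F\Vert_{L^2(Q)}$, and combined with the Poincar\'{e} inequality this gives \eqref{u0est}.

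To sharpen the bound on $c^{(0)}_\theta$ to the decaying estimate \eqref{c0es}, I would instead test with $(d,\varphi)=(c^{(0)}_\theta,0)$. This gives
\[
\bigl(A^{\rm hom}\theta\cdot\theta+1\bigr)\bigl\vert c^{(0)}_\theta\bigr\vert^2=\overline{c^{(0)}_\theta}\int_Q\bigl({\mathcal P}_{\rm f}F-v^{(0)}_\theta\bigr),
\]
whence $\vert c^{(0)}_\theta\vert\le C\bigl(1+\vert\theta\vert^2\bigr)^{-1}\bigl(\Vert F\Vert_{L^2(Q)}+\Vert v^{(0)}_\theta\Vert_{L^2(Q_0)}\bigr)$, using $A^{\rm hom}\theta\cdot\theta\ge\nu M_{\rm min}\vert\theta\vert^2$; since $\Vert v^{(0)}_\theta\Vert_{L^2(Q_0)}\le C\Vert\nabla v^{(0)}_\theta\Vert_{L^2(Q_0)}\le C\Vert F\Vert_{L^2(Q)}$ from the first step, \eqref{c0es} follows. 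Finally, \eqref{u1est} is immediate from Corollary 6.1: by \eqref{u1cellest} one has $\Vert u^{(1)}_\theta\Vert_{H^1(Q)}\le\sum_{j=1}^d\Vert N_j\Vert_{H^1(Q)}\vert\theta_j\vert\,\vert c^{(0)}_\theta\vert\le C\vert\theta\vert\,\vert c^{(0)}_\theta\vert$, and substituting \eqref{c0es} gives the claimed bound $C\vert\theta\vert(1+\vert\theta\vert^2)^{-1}\Vert F\Vert_{L^2(Q)}$.

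The only genuine subtlety is making sure the constants in the Poincar\'{e} inequality on $H^1_0(Q_0)$ and in the lower bound $A^{\rm hom}\theta\cdot\theta\ge\nu M_{\rm min}\vert\theta\vert^2$ are independent of $\theta$ — but the first is a fixed inequality on the fixed domain $Q_0$ and the second was already established in Section \ref{homogsection}, so there is no real obstacle; the argument is a routine sequence of energy estimates. One should note that \eqref{u0est}, \eqref{u1est} are stated for all $\theta\in\varepsilon^{-1}Q'$, not just $\vert\theta\vert\le1$, but since every estimate above is uniform in $\theta$ this causes no difficulty.
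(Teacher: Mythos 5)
Your proof is correct and follows essentially the same route as the paper's: test \eqref{inner.e1} with $(c^{(0)}_\theta,v^{(0)}_\theta)$ to get \eqref{u0est}, then with $(c^{(0)}_\theta,0)$ to get the scalar identity for $c^{(0)}_\theta$, then invoke Corollary~\ref{cor3} for \eqref{u1est}. One small imprecision worth noting: in your first displayed inequality the coefficient of $\vert c^{(0)}_\theta\vert^2$ coming from $\int_Q\vert u^{(0)}_\theta\vert^2$ should be $\vert Q_1\vert$, not $1$, since $u^{(0)}_\theta=c^{(0)}_\theta$ only on $Q_1$ while on $Q_0$ one has $u^{(0)}_\theta=c^{(0)}_\theta+v^{(0)}_\theta$, whose integral has no useful sign; as $\vert Q_1\vert>0$ this changes nothing. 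The only minor stylistic divergence from the paper is that you bound $\Vert v^{(0)}_\theta\Vert_{L^2(Q_0)}$ via the Poincar\'e inequality on $H^1_0(Q_0)$, whereas the paper uses $\Vert v^{(0)}_\theta\Vert_{L^2(Q_0)}\le\Vert u^{(0)}_\theta\Vert_{L^2(Q)}+\vert Q_0\vert^{1/2}\vert c^{(0)}_\theta\vert$ and absorbs the resulting $\vert Q_0\vert^{1/2}\vert c^{(0)}_\theta\vert^2$ into the left-hand side using $\vert Q_0\vert<1$; both are equally valid.
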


\begin{proof}
Setting $(d,\varphi)=(c^{(0)}_\theta, v^{(0)}_\theta)$  in \eqref{inner.e1}, and dropping the scripts ``$(0)$'' and ``$\theta$'' for convenience, yields
\[
A^{\text{hom}}\theta\cdot\theta\vert c \vert^2  + \int_Q A_0 \nabla u \cdot \nabla u +  \int_{Q} \vert u \vert^2 = \int_Q {\mathcal P}_{\rm f}F \overline{u},
\]
and \eqref{u0est} follows by the Cauchy-Schwarz inequality. 

Setting $(d,\varphi)=(c^{(0)}_\theta, 0)$  in \eqref{inner.e1} yields
\[
A^{\text{hom}}\theta\cdot\theta\vert c\vert^2 + \vert c\vert^2  = \left( \int_Q {\mathcal P}_{\rm f}F \right) \overline{c} - \left( \int_{Q_0} v \right) \overline{c}.
\]
Using the estimate
\begin{flalign*}
\left( \int_Q {\mathcal P}_{\rm f}F \right) \overline{c} - \left( \int_{Q_0} v \right) \overline{c} & \le \norm{F}{L^2(Q)} \vert c \vert + \vert Q_0 \vert^{1/2} \norm{v}{L^2(Q)} \vert c \vert \\
& \le\norm{F}{L^2(Q)} \vert c \vert + \vert Q_0\vert^{1/2}\norm{u}{L^2(Q)} \vert c \vert + \vert Q_0\vert^{1/2}\vert c \vert^2,
\end{flalign*}
along with the positivity of $A^{\text{hom}}$ and the bound \eqref{u0est}, 
we infer \eqref{c0es}.
The estimate (\ref{u1est}) is now a direct consequence of (\ref{c0es}) and (\ref{u1cellest}).
\end{proof}
\begin{lemma}
\label{auxiliarylemma2}
For each $\theta\in\varepsilon^{-1}Q',$ $ \vert \theta \vert \ge 1 $, let $u^{(0)}_{\ep, \theta} = \mathcal{I}\bigl(c^{(0)}_{\varepsilon,\theta},v^{(0)}_{\varepsilon,\theta}\bigr),$
where 
 the pair  $\bigl(c^{(0)}_{\varepsilon,\theta}, v^{(0)}_{\varepsilon, \theta}\bigr)\in{\mathcal H}_0$ satisfies the identity
\begin{equation}
\label{1inner.e1}
{\mathfrak b}^{\rm hom}_{\varepsilon,\theta}\Bigl(\bigl(c^{(0)}_{\varepsilon,\theta},v^{(0)}_{\varepsilon, \theta}\bigr), (d,\varphi)\Bigr)+\int_Q\bigl(c^{(0)}_{\varepsilon,\theta}+v^{(0)}_{\varepsilon, \theta}\bigr)\overline{(d+\varphi)}=\int_Q {\mathcal P}_{\rm f}F\overline{(d+\varphi)},\ \ \ \ \ (d,\varphi)\in{\mathcal H}_0,
\end{equation}
with $F\in L^2(Q).$
We denote by $u^{(1)}_{\ep, \theta}$ a solution to the unit-cell problem
$$
- \nabla \cdot A_1 \nabla  u^{(1)}_{\ep, \theta} = \mathrm{i} \nabla \cdot A_1 \theta c^{(0)}_{\ep, \theta}
$$
such that
\beq
\int_{Q}A_1\theta\cdot\theta\, u^{(1)}_{\ep, \theta}=0.
\eeq{eq:add1}
Then the following estimates hold with some $C>0:$
\begin{align}
\bigl\vert c^{(0)}_{\ep, \theta} \bigr\vert & \le C \left(1 + \vert \theta \vert \right)^{-2}\Vert F\Vert_{L^2(Q)},
\label{cc0es}\\
\bigl\Vert u^{(0)}_{\ep, \theta}\bigr\Vert_{L^2(Q)} & \le C
\norm{F}{L^2(Q)}, 
\label{uu0est} \\
\bigl\Vert (\nabla + \mathrm{i} \ep \theta )v^{(0)}_{\ep, \theta}\bigr\Vert_{L^2(Q)} & \le C
\norm{F}{L^2(Q)}, 
\label{vv0est} \\
\bigl\Vert u^{(1)}_{\ep, \theta}\bigr\Vert_{H^1(Q)} & \le C\vert\theta\vert\bigl(1+\vert\theta\vert^2\bigr)^{-1}\norm{F}{L^2(Q)}.
\label{uu1est}
\end{align}
\end{lemma}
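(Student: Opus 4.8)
The plan is to follow the scheme of Lemma~\ref{auxiliarylemma}: test the identity \eqref{1inner.e1} against well-chosen pairs in ${\mathcal H}_0$ to extract the ``energy'' bounds \eqref{uu0est}, \eqref{vv0est} and \eqref{cc0es}, and then deduce \eqref{uu1est} from Corollary~\ref{cor3} together with the normalisation \eqref{eq:add1}. Throughout write $c:=c^{(0)}_{\varepsilon,\theta}$, $v:=v^{(0)}_{\varepsilon,\theta}$ and $u:=u^{(0)}_{\varepsilon,\theta}={\mathcal I}(c,v)$. Setting $(d,\varphi)=(c,v)$ in \eqref{1inner.e1} and using the explicit form \eqref{bhomform} of ${\mathfrak b}^{\rm hom}_{\varepsilon,\theta}$ gives
\[
A^{\rm hom}\theta\cdot\theta\,|c|^2+\int_Q A_0\bigl|(\nabla+{\rm i}\varepsilon\theta)v\bigr|^2+\Vert u\Vert_{L^2(Q)}^2=\int_Q {\mathcal P}_{\rm f}F\,\overline u.
\]
Each term on the left is non-negative and the right-hand side is bounded by $\Vert F\Vert_{L^2(Q)}\Vert u\Vert_{L^2(Q)}$, so $\Vert u\Vert_{L^2(Q)}\le\Vert F\Vert_{L^2(Q)}$, which is \eqref{uu0est}; the $A_0$-term then yields \eqref{vv0est} by the ellipticity $A_0\ge\nu I$.

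To reach the sharper decay \eqref{cc0es} I would next test \eqref{1inner.e1} against $(d,\varphi)=(c,0)$, for which ${\mathcal I}(c,0)$ is the constant $c$ on $Q$; since $|Q|=1$ this produces $\bigl(A^{\rm hom}\theta\cdot\theta+1\bigr)|c|^2=\overline c\bigl(\int_Q {\mathcal P}_{\rm f}F-\int_{Q_0}v\bigr)$, hence
\[
\bigl(A^{\rm hom}\theta\cdot\theta+1\bigr)|c|\le\Vert F\Vert_{L^2(Q)}+|Q_0|^{1/2}\Vert v\Vert_{L^2(Q_0)}.
\]
Because $v=u-c$ on $Q_0$, the bound \eqref{uu0est} just obtained gives $\Vert v\Vert_{L^2(Q_0)}\le\Vert F\Vert_{L^2(Q)}+|c|\,|Q_0|^{1/2}$; moving the resulting $|c|$-term to the left and using $A^{\rm hom}\theta\cdot\theta\ge\nu M_{\rm min}|\theta|^2$ together with $1-|Q_0|>0$ leads to $\bigl(\nu M_{\rm min}|\theta|^2+1-|Q_0|\bigr)|c|\le C\Vert F\Vert_{L^2(Q)}$, which is \eqref{cc0es} (recall that $1+|\theta|^2$ and $(1+|\theta|)^2$ are comparable).

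For \eqref{uu1est}, Corollary~\ref{cor3} shows that $w:={\rm i}\sum_{j=1}^d N_j\theta_j c\in V^\perp$ solves the unit-cell equation for $u^{(1)}_{\varepsilon,\theta}$, with $\Vert w\Vert_{H^1(Q)}\le C|\theta|\,|c|$ by \eqref{u1cellest} and the Cauchy--Schwarz inequality in $j$. As in \eqref{u1eq}, $u^{(1)}_{\varepsilon,\theta}=w+\kappa$ for a constant $\kappa\in{\mathbb C}$, which \eqref{eq:add1} fixes as $\kappa=-\bigl(\int_Q A_1\theta\cdot\theta\bigr)^{-1}\int_Q (A_1\theta\cdot\theta)\,w$; since $\int_Q A_1\theta\cdot\theta=\bigl(\int_{Q_1}A_1\bigr)\theta\cdot\theta\ge\nu|Q_1||\theta|^2>0$ and $\bigl|\int_Q (A_1\theta\cdot\theta)\,w\bigr|\le\Vert A_1\Vert_{L^\infty}|\theta|^2\Vert w\Vert_{L^2(Q)}$, it follows that $|\kappa|\le C\Vert w\Vert_{L^2(Q)}\le C|\theta|\,|c|$. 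Hence $\Vert u^{(1)}_{\varepsilon,\theta}\Vert_{H^1(Q)}\le\Vert w\Vert_{H^1(Q)}+|\kappa|\le C|\theta|\,|c|$, and combining with \eqref{cc0es} gives \eqref{uu1est}.

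The only genuinely delicate point is obtaining \eqref{cc0es}: the naive energy identity of the first paragraph only yields $|c|\le C|\theta|^{-1}\Vert F\Vert_{L^2(Q)}$, and the improvement to the $|\theta|^{-2}$ rate rests on testing against the pure constant $(c,0)$ and, crucially, on the \emph{strict} inequality $|Q_0|<1$ (equivalently $|Q_1|>0$), which is exactly what lets the extra $|c|$-term coming from $\Vert v\Vert_{L^2(Q_0)}$ be absorbed into the left-hand side. All the remaining manipulations are routine once \eqref{uu0est} is in hand.
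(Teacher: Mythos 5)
Your proof is correct and follows essentially the same route as the paper's: the paper first handles the normalisation of $u^{(1)}_{\ep,\theta}$ by subtracting the $A_1\theta\cdot\theta$-weighted average (using ellipticity and boundedness of $A_1$ on $Q_1$ to bound the subtracted constant), and then simply asserts that \eqref{cc0es}--\eqref{uu1est} follow ``by appropriately modifying the proof of Lemma~\ref{auxiliarylemma}.'' You have carried out exactly those modifications explicitly --- testing \eqref{1inner.e1} with $(c,v)$ and then with $(c,0)$, and correctly isolating the absorption step that hinges on $|Q_0|<1$ --- so the two arguments coincide.
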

\begin{proof}
Taking  the unique 
solution $w_{\ep, \theta} \in V^\perp$
to the problem
$$
- \nabla \cdot A_1 \nabla  w_{\ep, \theta} = \mathrm{i} \nabla \cdot A_1\theta c^{(0)}_{\ep, \theta},
$$
we find by Corollary \ref{cor3} that
$$
\bigl\Vert w_{\ep, \theta} \bigr\Vert_{H^1(Q)}  \le C \vert \theta \vert \vert c^{(0)}_{\ep, \theta} \vert.
$$
Denoting $u^{(1)}_{\ep, \theta} = w_{\ep, \theta} - \left(  \int_Q A_1 \theta \cdot \theta \right)^{-1}\int_Q A_1 \theta \cdot \theta w_{\ep, \theta}$, it is clear that \eqref{eq:add1} holds. By the properties of  boundedness and ellipticity of $A_1$ we find that 
$$
\left(\int_{Q}A_1\theta\cdot\theta\right)^{-1}\int_{Q}A_1\theta\cdot\theta w_{\ep, \theta} \le C\bigl\Vert w_{\ep, \theta}\bigr\Vert_{L^2(Q)}.
$$ 
In particular, the estimate
\begin{equation*}
\bigl\Vert u^{(1)}_{\ep, \theta} \bigr\Vert_{H^1(Q)}  \le C \vert \theta \vert \vert c^{(0)}_{\ep, \theta} \vert.
\end{equation*}
holds.

Inequalities \eqref{cc0es}--\eqref{uu1est} are now shown by appropriately modifying the proof of Lemma \ref{auxiliarylemma}.
\end{proof}
\begin{lemma}
\label{lem.a1}
Let $\theta\in\varepsilon^{-1}Q',$ and let $F_{\theta}$ be given by \eqref{F2}. There exists a function $R_{\theta} \in H^1_{\#}(Q)$ satisfying
$$
- \nabla \cdot A_1 \nabla R_{\theta}  = F_\theta,
$$
such that
\begin{equation}
\label{R1est}
\bigl\Vert R_{\theta}\bigr\Vert_{H^1(Q)} \le C  \norm{F}{L^2(Q)}
\end{equation}
for some constant $C>0$ independent of $\ep, \theta$.
\end{lemma}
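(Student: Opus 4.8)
The plan is to regard $F_\theta$ as a bounded functional on $H^1_\#(Q)$ whose norm is controlled by $C\|F\|_{L^2(Q)}$ uniformly in $\theta$ (and in $\varepsilon$), to apply Lemma \ref{bp:lem2} with $\varkappa=0$ to obtain a solution $R_\theta$ of $-\nabla\cdot A_1\nabla R_\theta=F_\theta$ lying in $V^\perp$, and then to read off the $H^1$-bound by testing the resulting weak equation against $R_\theta$ itself and invoking the coercivity of $w\mapsto\int_Q A_1\nabla w\cdot\overline{\nabla w}$ on $V^\perp$ that was established in the proof of Lemma \ref{bp:lem2}(ii).

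\emph{Solvability.} First I would observe that each of the terms making up \eqref{F2} is either an element of $L^2(Q)$ (the terms $F$, ${\rm i}\,\theta\cdot A_1\nabla u^{(1)}_\theta$, $-\theta\cdot A_1\theta\,c^{(0)}_\theta$ and $-u^{(0)}_\theta$) or the distributional divergence of an $L^2(Q)$ vector field (the terms ${\rm i}\nabla\cdot\bigl(A_1\theta\,u^{(1)}_\theta\bigr)$ and $\nabla\cdot A_0\nabla u^{(0)}_\theta$), so that $F_\theta\in(H^1_\#(Q))^{*}$. The compatibility condition of Lemma \ref{bp:lem2}(ii), namely $\langle F_\theta,v\rangle=0$ for all $v\in V$, is exactly the solvability condition for \eqref{u2eq}; it holds because $\bigl(c^{(0)}_\theta,v^{(0)}_\theta\bigr)$ satisfies \eqref{inner.e1} and $u^{(1)}_\theta$ is the solution \eqref{u1eq} of \eqref{fa:eq3} --- this is precisely the computation that produced \eqref{inner.e1}, read in reverse. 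Lemma \ref{bp:lem2}(ii)--(iii) then yields a solution $R_\theta\in H^1_\#(Q)$ of $-\nabla\cdot A_1\nabla R_\theta=F_\theta$, and we may take it to be the unique such solution lying in $V^\perp$.

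\emph{The key estimate $\|F_\theta\|_{(H^1_\#(Q))^{*}}\le C\|F\|_{L^2(Q)}$.} For arbitrary $\phi\in H^1_\#(Q)$ I would estimate $\langle F_\theta,\phi\rangle$ term by term, integrating the two divergence terms by parts (no boundary contribution appears, by periodicity) and using only the $L^\infty$-bounds on $A_0$ and $A_1$. Each term is then bounded by $C\|\phi\|_{H^1(Q)}$ times one of $\|F\|_{L^2(Q)}$, $\|u^{(0)}_\theta\|_{H^1(Q)}$, $|\theta|\,\|u^{(1)}_\theta\|_{H^1(Q)}$ or $|\theta|^2\,|c^{(0)}_\theta|$. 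The crucial point is that, by Lemma \ref{auxiliarylemma}, one has $\|u^{(0)}_\theta\|_{H^1(Q)}\le C\|F\|_{L^2(Q)}$ from \eqref{u0est}, while \eqref{u1est} and \eqref{c0es} give $|\theta|\,\|u^{(1)}_\theta\|_{H^1(Q)}\le C|\theta|^2\bigl(1+|\theta|^2\bigr)^{-1}\|F\|_{L^2(Q)}$ and $|\theta|^2\,|c^{(0)}_\theta|\le C|\theta|^2\bigl(1+|\theta|^2\bigr)^{-1}\|F\|_{L^2(Q)}$, both $\le C\|F\|_{L^2(Q)}$: the powers of $|\theta|$ that \eqref{F2} superficially carries are exactly absorbed by the $\theta$-decay of $c^{(0)}_\theta$ and $u^{(1)}_\theta$. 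Summing the contributions gives $|\langle F_\theta,\phi\rangle|\le C\|F\|_{L^2(Q)}\|\phi\|_{H^1(Q)}$ with $C$ independent of $\theta$ and $\varepsilon$.

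\emph{Conclusion, and the main obstacle.} Since $R_\theta\in V^\perp$, the Lax--Milgram construction used in the proof of Lemma \ref{bp:lem2}(ii) gives $\int_Q A_1\nabla R_\theta\cdot\overline{\nabla\phi}=\langle F_\theta,\phi\rangle$ for all $\phi\in V^\perp$; taking $\phi=R_\theta$ and combining the identity $\int_Q A_1\nabla R_\theta\cdot\overline{\nabla R_\theta}=\bigl\|A_1^{1/2}\nabla R_\theta\bigr\|_{L^2(Q)}^2\ge\nu\|R_\theta\|_{H^1(Q)}^2$ --- the coercivity on $V^\perp$ shown in the proof of Lemma \ref{bp:lem2}(ii), with $d(0)=1$ --- with the previous step yields $\nu\|R_\theta\|_{H^1(Q)}^2\le|\langle F_\theta,R_\theta\rangle|\le C\|F\|_{L^2(Q)}\|R_\theta\|_{H^1(Q)}$, hence \eqref{R1est}. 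The only substantial step is the key estimate: \textit{a priori} the right-hand side \eqref{F2} contains contributions growing like $|\theta|$ and $|\theta|^2$ (through $u^{(1)}_\theta$ and through $\theta\cdot A_1\theta\,c^{(0)}_\theta$), and the whole content of the lemma is that the $\theta$-decay of $c^{(0)}_\theta$ and $u^{(1)}_\theta$ furnished by Lemma \ref{auxiliarylemma} precisely compensates these weights, leaving a bound uniform in $\theta$; the solvability and the concluding coercivity argument are routine once Lemma \ref{bp:lem2} is in hand.
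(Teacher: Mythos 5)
Your proof is correct and follows essentially the same route as the paper: reduce to the bound $\Vert F_\theta\Vert_{H^{-1}_\#(Q)}\le C\Vert F\Vert_{L^2(Q)}$, obtain it term by term from the $\theta$-decay furnished by Lemma \ref{auxiliarylemma}, and close via Lemma \ref{bp:lem2} with $\varkappa=0$ (the paper phrases the final step through the chain $\Vert R_\theta\Vert_{H^1}\le C\Vert A_1\nabla R_\theta\Vert_{L^2}$ followed by testing against $R_\theta$, which is precisely the coercivity on $V^\perp$ you invoke directly).
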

\begin{proof}
The functions $u^{(0)}$ and $u^{(1)}$ are chosen so that $F_{\theta}$ satisfies the solvability condition for the equation \eqref{u2eq}, thus the existence of a solution $u^{(2)}$ is guaranteed by Lemma \ref{bp:lem2}. Denoting by $R_\theta$  to be the unique part in $V^\perp$ of any such solution, 
{\it  i.e.} letting $R_{\theta} \in V^\perp$ be such that
\begin{equation}
\label{le1}
\int_Q A_1 \nabla R_{\theta} \cdot \nabla \phi = \mv{F_\theta,\phi} \qquad \forall \phi \in H^1_{\#}(Q),
\end{equation}
we find, by choosing $\phi = R_{\theta}$ in \eqref{le1} and using the assumptions on $A_1,$ that
$$
\bigl\Vert A_1 \nabla R_{\theta}\bigr\Vert_{L^2(Q)}^2 \le \bigl\Vert A^{1/2}_1\bigr\Vert_{L^\infty(Q)}^2 \bigl\Vert A_1^{1/2} \nabla R_{\theta}\bigr\Vert_{L^2(Q)}^2 \le C\bigl\Vert F_\theta\bigr\Vert_{H^{-1}_{\#}(Q)}\bigl\Vert R_{\theta}\bigr\Vert_{H^1_{\#}(Q)},
$$
where $A^{1/2}_1$ is the square root matrix of $A_1$. Due to Lemma \ref{bp:lem2}(i), it remains to show that
$$
\norm{F_\theta}{H^{-1}_\#(Q)} \le C \norm{F}{L^2(Q)}
$$ 
for some constant $C$. This can be seen by Lemma \ref{auxiliarylemma} and by noting, for $\theta\in\varepsilon^{-1}Q',$ that
\begin{flalign*}
\bigl\vert\mv{F_\theta,\phi}\bigr\vert & =\biggl\vert\int_Q F \overline{\phi} - \mathrm{i} A_1u^{(1)}_\theta  \theta \cdot\overline{\nabla \phi} + \mathrm{i} \theta \cdot A_1 \nabla u^{(1)}_\theta \overline{\phi} - A_0 \nabla u^{(0)}_\theta\cdot\overline{\nabla \phi} - \theta \cdot A_1 \theta c^{(0)}_\theta \overline{\phi} - u^{(0)}_\theta \overline{\phi}\biggr\vert\\
& \le C \left( \norm{F}{L^2(Q)} + \vert \theta \vert\bigl\Vert u^{(1)}_\theta\bigr\Vert_{H^1(Q)} +  \bigl\Vert u^{(0)}_\theta\bigr\Vert_{H^1(Q)} + \vert \theta \vert^2 \bigl\vert c^{(0)}_\theta\bigr\vert \right) \norm{\phi}{H^1(Q)}
\end{flalign*}
for all $\phi\in H^1_\#(Q)$.
\end{proof}
\begin{lemma}
\label{lem.a2}
For each $\varepsilon>0,$ $\theta\neq 0,$ consider $H_{\varepsilon,\theta}\in H^{-1}_{\#}(Q)$ such that 
$\mv{H_{\ep,\theta}, \varphi} = 0$ for all $\varphi \in H^1_0(Q_0).$ Then there exists a solution 
 $R_{\ep,\theta} \in H^1_{\#}(Q)$ to the problem
 $$
- \bigl(\nabla + {\rm i}\ep \theta \bigr) \cdot A_1 \bigl( \nabla +{\rm i}\ep \theta \bigr) R_{\ep,\theta}  = H_{\ep, \theta},
$$
that satisfies the estimate 
\[
\bigl\Vert R_{\ep,\theta}\bigr\Vert_{H^1(Q)} \le C \left[ \frac{1}{\vert \ep \theta \vert}\bigl\Vert H_{\ep,\theta} - \langle H_{\ep, \theta}, 1\rangle\bigr\Vert_{H^{-1}_{\#}(Q)} + \frac{1}{\vert \ep \theta \vert^2} \bigl\vert\langle H_{\ep,\theta},1\rangle\bigr\vert  \right]
\]
for some constant $C>0$ independent of $\ep, \theta$.
\end{lemma}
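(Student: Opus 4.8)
Set $\varkappa:=\ep\theta\in Q'\setminus\{0\}$ and let $L_\varkappa:=-(\nabla+{\rm i}\varkappa)\cdot A_1(\nabla+{\rm i}\varkappa)\colon H^1_\#(Q)\to H^{-1}_\#(Q),$ so that $\langle L_\varkappa u,v\rangle=\int_Q A_1(\nabla+{\rm i}\varkappa)u\cdot\overline{(\nabla+{\rm i}\varkappa)v}.$ The unitary substitution $w={\rm e}^{{\rm i}\varkappa\cdot y}R_{\ep,\theta}$ carries the equation for $R_{\ep,\theta}$ into $-\nabla\cdot A_1\nabla w=G$ on $H^1_\varkappa(Q),$ where $\langle G,\phi\rangle:=\langle H_{\ep,\theta},{\rm e}^{-{\rm i}\varkappa\cdot y}\phi\rangle;$ under it $\ker L_\varkappa$ becomes $V(\varkappa)=H^1_0(Q_0)$ (here $\varkappa\neq0$ and the connectedness of $\cup_n(Q_1+n)$ are used), the hypothesis on $H_{\ep,\theta}$ becomes exactly the compatibility condition of Lemma \ref{bp:lem2}(ii), and $\norm{R_{\ep,\theta}}{H^1(Q)}$ and $\norm{w}{H^1(Q)}$ are equivalent uniformly in $\varkappa\in Q'.$ Thus a solution exists, and I would fix $R_{\ep,\theta}$ by requiring $R_{\ep,\theta}\in W^\perp,$ where $W:=\ker L_\varkappa$ and $\perp,$ $P_{W^\perp}$ refer to the inner product $(u,v)_\ast:=\int_Q (\nabla+{\rm i}\varkappa)u\cdot\overline{(\nabla+{\rm i}\varkappa)v}+\int_Q u\overline v,$ uniformly equivalent to that of $H^1(Q)$ (this choice corresponds to $w\in V^\perp(\varkappa)$ in Lemma \ref{bp:lem2}(iii)). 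Write $\bar H:=\langle H_{\ep,\theta},1\rangle$ and $H_1:=H_{\ep,\theta}-\bar H\in H^{-1}_\#(Q),$ which has zero mean since $|Q|=1.$

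Fix $\varkappa_0\in(0,1)$ small (how small will emerge below, and depends only on $A_1,Q_0,Q_1,d$). For $|\varkappa|\ge\varkappa_0$ the estimate is cheap: testing the equation with $R_{\ep,\theta}$ and invoking the Poincar\'e bound of Lemma \ref{bp:lem2}(i) (with $d(\varkappa)=|\varkappa|^{-1}\le\varkappa_0^{-1}$) gives, in the usual way, $\norm{R_{\ep,\theta}}{H^1(Q)}\le C\norm{H_{\ep,\theta}}{H^{-1}_\#(Q)}\le C\bigl(\norm{H_1}{H^{-1}_\#(Q)}+|\bar H|\bigr),$ which is of the claimed form because $|\varkappa|$ is bounded above, so $\norm{H_1}{H^{-1}_\#(Q)}\le C|\varkappa|^{-1}\norm{H_1}{H^{-1}_\#(Q)}$ and $|\bar H|\le C|\varkappa|^{-2}|\bar H|.$ From now on $0<|\varkappa|<\varkappa_0.$

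The main point is a decomposition of $R_{\ep,\theta}$ along the low-energy direction of $L_\varkappa.$ With $N_k\in V^\perp$ the unit-cell solutions of (\ref{unitcellproblems}) (Corollary \ref{cor3}), put $\Psi_\varkappa:=1+{\rm i}\sum_{k=1}^d\varkappa_k N_k,$ so $\int_Q\Psi_\varkappa=1$ since $\int_Q N_k=0,$ and $\Phi_\varkappa:=\bigl(\int_Q P_{W^\perp}\Psi_\varkappa\bigr)^{-1}P_{W^\perp}\Psi_\varkappa;$ this is well defined for $\varkappa_0$ small, because $\int_Q P_{W^\perp}\Psi_\varkappa\to\int_Q P_{W^\perp}1\neq0$ as $\varkappa\to0,$ and then $\int_Q\Phi_\varkappa=1,$ $\Phi_\varkappa\in W^\perp,$ $\norm{\Phi_\varkappa}{H^1(Q)}\le C.$ Since $P_{W^\perp}\Psi_\varkappa-\Psi_\varkappa\in W=\ker L_\varkappa,$ one has $L_\varkappa\Phi_\varkappa=\bigl(\int_Q P_{W^\perp}\Psi_\varkappa\bigr)^{-1}L_\varkappa\Psi_\varkappa,$ and a short computation with the cell equations and the definition (\ref{matrixAhom}) shows $\langle L_\varkappa\Psi_\varkappa,1\rangle=A^{\rm hom}\varkappa\cdot\varkappa$ (this is where the effective matrix enters) and $\norm{L_\varkappa\Psi_\varkappa}{H^{-1}_\#(Q)}\le C|\varkappa|^2,$ whence $\norm{L_\varkappa\Phi_\varkappa}{H^{-1}_\#(Q)}\le C|\varkappa|^2$ and $c|\varkappa|^2\le|\langle L_\varkappa\Phi_\varkappa,1\rangle|\le C|\varkappa|^2$ by positive-definiteness of $A^{\rm hom}.$ Now write $R_{\ep,\theta}=\beta\Phi_\varkappa+S$ with $\beta:=\int_Q R_{\ep,\theta};$ then $\int_Q S=0$ and, crucially, $S\in W^\perp.$ Pairing $H_{\ep,\theta}=\beta L_\varkappa\Phi_\varkappa+L_\varkappa S$ with $1,$ using $\langle L_\varkappa S,1\rangle=-{\rm i}\int_Q\varkappa\cdot A_1(\nabla+{\rm i}\varkappa)S$ and the lower bound on $|\langle L_\varkappa\Phi_\varkappa,1\rangle|,$ gives
\[
|\beta|\le C|\varkappa|^{-2}|\bar H|+C|\varkappa|^{-1}\norm{A_1(\nabla+{\rm i}\varkappa)S}{L^2(Q)};
\]
testing $L_\varkappa S=H_{\ep,\theta}-\beta L_\varkappa\Phi_\varkappa$ with $S,$ using $\langle H_{\ep,\theta},S\rangle=\langle H_1,S\rangle$ (as $\int_Q S=0$) and $\norm{L_\varkappa\Phi_\varkappa}{H^{-1}_\#(Q)}\le C|\varkappa|^2,$ gives
\[
\norm{A_1(\nabla+{\rm i}\varkappa)S}{L^2(Q)}^2\le C\bigl(\norm{H_1}{H^{-1}_\#(Q)}+|\varkappa|^2|\beta|\bigr)\,\norm{S}{H^1(Q)}.
\]
If one can bound $\norm{S}{H^1(Q)}\le C\norm{A_1(\nabla+{\rm i}\varkappa)S}{L^2(Q)},$ the second display yields $\norm{A_1(\nabla+{\rm i}\varkappa)S}{L^2(Q)}\le C(\norm{H_1}{H^{-1}_\#(Q)}+|\varkappa|^2|\beta|),$ which fed into the first gives $|\beta|\le C|\varkappa|^{-2}|\bar H|+C|\varkappa|^{-1}\norm{H_1}{H^{-1}_\#(Q)}+C|\varkappa|\,|\beta|,$ the last term being absorbed once $\varkappa_0$ is small enough; finally $\norm{R_{\ep,\theta}}{H^1(Q)}\le C|\beta|+\norm{S}{H^1(Q)}$ together with $|\varkappa|<\varkappa_0\le1$ gives the assertion.

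The crux is therefore the Poincar\'e-type inequality: there exist $\varkappa_0>0$ and $C>0$ such that $\norm{S}{H^1(Q)}\le C\norm{A_1(\nabla+{\rm i}\varkappa)S}{L^2(Q)}$ for all $\varkappa$ with $0<|\varkappa|\le\varkappa_0$ and all $S\in W^\perp$ with $\int_Q S=0.$ The factor $|\varkappa|^{-1}$ in Lemma \ref{bp:lem2}(i) comes precisely from the near-constant low-energy mode of $L_\varkappa,$ which is exactly what the constraint $\int_Q S=0$ removes (and what the mean-zero functional $H_1$ fails to detect); this is what makes the coupling above close. I would prove the inequality by contradiction and compactness: otherwise there are $\varkappa_n\to\varkappa_\ast\in[0,\varkappa_0]$ and $S_n\in W^\perp$ with $\int_Q S_n=0,$ $\norm{S_n}{H^1(Q)}=1,$ $\norm{A_1(\nabla+{\rm i}\varkappa_n)S_n}{L^2(Q)}\to0;$ a weak $H^1$ limit $S_\ast$ (strong in $L^2$) satisfies $A_1(\nabla+{\rm i}\varkappa_\ast)S_\ast=0$ (so $S_\ast$ is constant on $Q_1,$ using $A_1\ge\nu I$ on $Q_1$ and $A_1=0$ on $Q_0$), lies in the limiting orthogonality class, and has $\int_Q S_\ast=0;$ solving $-\Delta S_\ast+S_\ast=0$ in $Q_0$ with constant boundary value and imposing zero mean forces $S_\ast=0.$ One then upgrades $S_n\to0$ in $L^2$ to $S_n\to0$ in $H^1(Q)$: on $Q_1$ by $\norm{\nabla S_n}{L^2(Q_1)}\le\nu^{-1}\norm{A_1(\nabla+{\rm i}\varkappa_n)S_n}{L^2(Q)}+|\varkappa_n|\norm{S_n}{L^2(Q)},$ and on $Q_0$ by comparing $S_n|_{Q_0}$ — the minimiser of $u\mapsto\int_{Q_0}|(\nabla+{\rm i}\varkappa_n)u|^2+\int_{Q_0}|u|^2$ subject to its boundary data — with an $H^1(Q_0)$-extension of $S_n$ from a fixed collar inside $Q_1$ (available since $\overline{Q}_0\subset(0,1)^d$), whose norm is $O(\norm{S_n}{H^1(Q_1)})\to0;$ this contradicts $\norm{S_n}{H^1(Q)}=1.$ Establishing this inequality, and verifying the quoted properties of the corrector $\Phi_\varkappa,$ are the only non-routine points of the argument; the rest is bookkeeping of constants.
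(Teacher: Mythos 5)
Your proposal is correct in its overall strategy, but it takes a genuinely different route from the paper's proof. Both arguments begin with the same gauge observation that turns the quasi-periodic operator into $-\nabla\cdot A_1\nabla$ on $H^1_{\varkappa}(Q)$ and invoke the solvability criterion and $|\varkappa|^{-1}$-bound of Lemma \ref{bp:lem2}, and both split the right-hand side into its mean and mean-zero parts. From there the paper proceeds \emph{constructively}: having the quasi-periodic solution $w\in V^\perp(\varkappa)$ and its periodic counterpart $r={\rm e}^{-{\rm i}\varkappa\cdot y}w$, it replaces $r$ by the $Q_1$-to-$Q$ \emph{extension} $R$ (with $R=r$ on $Q_1$ and $\Vert\nabla R\Vert_{L^2(Q)}\le C\Vert\nabla r\Vert_{L^2(Q_1)}$); because $A_1$ vanishes on $Q_0$, this $R$ still solves the equation, and testing with $\phi\equiv1$ controls $\int_Q R$ while testing with $\phi=R$ controls $\Vert\nabla r\Vert_{L^2(Q_1)}$ by $\Vert H_{\varepsilon,\theta}\Vert_{H^{-1}_\#}$ — two lines of energy estimates with explicit constants, no compactness. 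You, by contrast, build the near-kernel direction $\Phi_\varkappa$ out of the unit-cell correctors $N_k$ so that $\langle L_\varkappa\Phi_\varkappa,1\rangle\asymp A^{\rm hom}\varkappa\cdot\varkappa$, decompose $R=\beta\Phi_\varkappa+S$ with $\int_Q S=0$ and $S\in W^\perp$, and close the estimate with a uniform Poincar\'e-type inequality $\Vert S\Vert_{H^1}\le C\Vert A_1(\nabla+{\rm i}\varkappa)S\Vert_{L^2}$, which you propose to prove by contradiction and compactness. Your scheme is structurally sound and your bookkeeping is consistent (the two pairings, the absorption for small $\varkappa_0$, the split into $|\varkappa|\ge\varkappa_0$ and $|\varkappa|<\varkappa_0$), and the limiting identification $S_*=0$ at $\varkappa_*=0$ does go through once one invokes the maximum principle for $-\Delta+1$ in $Q_0$ to rule out a nonzero constant on $Q_1$ under the zero-mean constraint — a point you should spell out. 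The trade-off is that your route makes visible \emph{why} the exponents $|\varkappa|^{-1}$ and $|\varkappa|^{-2}$ arise (they are attached respectively to the orthogonal complement and to the $\Phi_\varkappa$-coefficient of the single near-kernel mode, whose symbol is $A^{\rm hom}\varkappa\cdot\varkappa$), at the cost of a non-constructive constant from the compactness argument; the paper's extension trick reaches the same estimate more economically and with explicit constants, but keeps the low-energy mechanism implicit.
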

\begin{proof}
For $\varkappa \neq  0$, $V(\varkappa) = H^1_0(Q_0)$, see \eqref{vkappa}.  By Lemma \ref{bp:lem2}, the assumption $\mv{H_{\ep, \theta}, \phi} = 0$ for all $\phi \in H^1_0(Q_0)$ implies there exists a unique weak solution $w_{\ep,\theta}\in V^{\perp}(\ep \theta)$ to the problem 
$$
- \nabla\cdot A_1 \nabla w_{\ep,\theta}(y) = \exp({\rm i}\ep \theta \cdot y)H_{\ep,\theta}(y), \qquad y \in Q,
$$
and
\begin{equation}
\label{le2}
\norm{w_{\ep,\theta}}{H^1(Q)} \le \frac{C}{\vert \ep \theta \vert} \bigl\Vert A_1 \nabla w_{\ep,\theta}\bigr\Vert_{L^2(Q)}.
\end{equation}
As $w_{\ep, \theta}\in H^1_{\ep \theta}(Q)$,  the function $r_{\ep, \theta}(y):=\exp(-{\rm i}\ep\theta\cdot y)w_{\ep, \theta}(y)$ is an element 
 of $H^1_{\#}(Q)$  and satisfies the identity
\begin{equation}
\label{le3}
\int_Q A_1 \bigl( \nabla + {\rm i}\ep \theta \bigr) r_{\ep,\theta}\cdot \overline{ \bigl( \nabla +{\rm i}\ep \theta \bigr) \phi } = \mv{H_{\ep,\theta},\phi} \qquad \forall \phi\in H^1_{\#}(Q),
\end{equation}
and by \eqref{le2} we find that
$$
\norm{r_{\ep, \theta}}{H^1(Q)} \le \frac{C}{\vert \ep \theta \vert^{2}} \norm{H_{\ep, \theta}}{H^{-1}_{\#}(Q)}.
$$
Therefore, by representing $r_{\ep, \theta} = s_{\ep, \theta} + t_{\ep, \theta},$ where $s_{\ep, \theta}, t_{\ep, \theta}$ are the solutions to \eqref{le3} for the right-hand sides $H_{\ep, \theta} - \mv{H_{\ep, \theta},1}$ and $\mv{H_{\ep, \theta},1}$ respectively, we argue that in order to prove the theorem it is sufficient to ensure that there exists a function $R_{\varepsilon,\theta}$ solving \eqref{le3} which satisfies the bound 
\begin{equation}
\label{RR2est}
\norm{R_{\ep, \theta}}{H^1(Q)}\le\frac{C}{\vert \ep \theta \vert} \norm{H_{\ep, \theta}}{H^{-1}_{\#}(Q)},
\end{equation}
for the class of $H_{\ep, \theta} \in H^{-1}_{\#}(Q)$ such that $\mv{H_{\ep, \theta},1}=0$.

Let $w_{\ep, \theta}, r_{\ep, \theta}$ be as above for a given $H_{\ep, \theta} \in H^{-1}_{\#}(Q)$ such that $\mv{H_{\ep, \theta},1}=0$. Denoting by $R_{\ep, \theta}$ an extension of $r_{\ep,\theta}$ such that
\begin{gather}
R_{\ep, \theta} = r_{\ep,\theta} \qquad \text{ in } Q_1, \label{e3} \\
\bigl\Vert \nabla R_{\ep, \theta}\bigr\Vert_{L^2(Q)} \le C \bigl\Vert\nabla r_{\ep,\theta}\bigr\Vert_{L^2(Q_1)}, \label{e4}
\end{gather}
it is clear that $R_{\ep, \theta}$ also satisfies \eqref{le3} with $r_{\varepsilon,\theta}$ replaced by $R_{\varepsilon,\theta}.$ We next show that $R_{\ep, \theta}$ satisfies the inequality  \eqref{RR2est}.
Substituting $\phi \equiv 1$ in \eqref{le3}, and recalling that $\mv{H_{\ep, \theta},1} =0$, we infer that
\[
\int_Q R_{\ep, \theta}=\biggl(\int_Q \ep^2 A_1 \theta \cdot \theta\biggr)^{-1}\left( - \int_Q \ep^2 A_1 \theta \cdot \theta  \left( R_{\ep, \theta} - \int_Q R_{\ep, \theta} \right)+{\rm i}\ep \int_Q A_1 \nabla R_{\ep,\theta} \cdot \theta \right),
\]
and hence
\[
\biggl\vert\int_Q R_{\ep, \theta}\biggr\vert\le\frac{C}{\left\vert \ep \theta \right\vert^2 } \left( \left\vert \ep \theta \right\vert^2 \biggl\Vert R_{\ep, \theta} - \int_{Q}R_{\ep, \theta}\biggr\Vert_{L^2(Q)}+ \left\vert \ep \theta \right\vert \bigl\Vert\nabla R_{\ep,\theta}\bigr\Vert_{L^2(Q_1)} \right).
\]
In particular, by \eqref{e4} and the standard Poincar\'{e} inequality, it follows that 
\begin{equation}
\label{e6}
\biggl\vert\int_Q R_{\ep, \theta}\biggr\vert\le \frac{C}{\left\vert \ep \theta \right\vert} \bigl\Vert\nabla r_{\ep,\theta}\bigr\Vert_{L^2(Q_1)}. 
\end{equation}
Therefore, by \eqref{e3}--\eqref{e6} we find that
\[
\bigl\Vert R_{\ep, \theta}\bigr\Vert_{H^1(Q)} 
\le C \left( \left\vert \int_{Q} R_{\ep, \theta} \right\vert^2 + \int_{Q} \bigl\vert \nabla R_{\ep, \theta} \bigr\vert^2 \right)^{1/2}\le \frac{C}{\left\vert \ep\theta \right\vert}\bigl\Vert\nabla r_{\ep, \theta}\bigr\Vert_{L^2(Q_1)}.
\]
To prove \eqref{RR2est} it now remains to show that
\begin{equation}
\label{e7}
\bigl\Vert\nabla r_{\ep, \theta}\bigr\Vert_{L^2(Q_1)} \le C \bigl\Vert H_{\ep, \theta}\bigr\Vert_{H^{-1}_{\#}(Q)}
\end{equation} 
for some constant $C>0.$
By virtue of the 
inequality
$
\Vert r_{\ep, \theta}\Vert_{L^2(Q)} \le\Vert w_{\ep,\theta}\Vert_{H^1(Q)}
$
and \eqref{le2} we find that
\beq
\bigl\Vert \nabla r_{\ep, \theta}\bigr\Vert_{L^2(Q_1)} 
\le \bigl\Vert\left( \nabla +{\rm i}\ep \theta \right) r_{\ep, \theta}\bigr\Vert_{L^2(Q_1)} + \left\vert \ep\theta \right\vert \norm{ r_{\ep,\theta}}{L^2(Q_1)} 
\le C  \bigl\Vert \left( \nabla +{\rm i}\ep \theta \right) r_{\ep, \theta}\bigr\Vert_{L^2(Q_1)} 
\eeq{ee1}
Further, substituting $\phi = R_{\ep, \theta}$ in \eqref{le3} and recalling \eqref{e3} yields
\[
\int_Q{ A_1 \left( \nabla +{\rm i}\ep \theta \right) r_{\ep, \theta} \cdot \overline{\left( \nabla +{\rm  i}\ep \theta \right) r_{\ep, \theta}}}  = \bigl\langle H_{\ep, \theta}, R_{\ep, \theta}\bigr\rangle 
=\biggl\langle H_{\ep, \theta}, R_{\ep, \theta}-\int_Q R_{\ep, \theta}\biggr\rangle+ \biggl\langle H_{\varepsilon, \theta}, \int_Q R_{\ep, \theta}\biggr\rangle
\]
\beq
= \biggl\langle H_{\ep,\theta}, R_{\ep, \theta} - \int_Q R_{\ep, \theta}\biggr\rangle 
\le C \norm{H_{\ep, \theta}}{H^{-1}_{\#}(Q)}\bigl\Vert\nabla R_{\ep, \theta}\bigr\Vert_{L^2(Q)} 
\le C \bigl\Vert H_{\ep, \theta}\bigr\Vert_{H^{-1}_{\#}(Q)}\bigl\Vert\nabla r_{\ep, \theta}\bigr\Vert_{L^2(Q_1)}. 
\eeq{ee2}
The last equality above follows from the assumption that $\mv{H_{\ep, \theta},1} = 0$. Finally, inequalities \eqref{ee1} and \eqref{ee2} imply \eqref{e7}. \end{proof}

\section{Proof of the main result}
\label{justification}

In terms of the notation introduced in Sections \ref{Blochsection} and \ref{homogsection},  proving Theorem \ref{maintheorem} is equivalent to showing that there exists a constant $C>0$ independent of $\theta$ and $\varepsilon$ such that
\[
\Bigl\Vert\bigl({\mathcal B}_{\varepsilon, \theta}+1\bigr)^{-1}-{\mathcal I}\bigl({\mathcal B}^{\rm hom}_{\varepsilon, \theta}+1\bigr)^{-1}{\mathcal I}^{-1}{\mathcal P}_{\rm f}\Bigr\Vert_{L^2(Q)\rightarrow L^2(Q)}\le C\varepsilon.
\]
This fact is a consequence of the following theorem.

\begin{theorem}
\label{thetaresest}
For each $\varepsilon>0,$ $\theta\in\varepsilon^{-1}Q',$ let $u^\ep_{\theta}$ be the solution to \eqref{bp:eq1} and let $u_{\varepsilon,\theta}^{(0)}:=c_{\varepsilon,\theta}^{(0)}+v_{\varepsilon,\theta}^{(0)}$ where 
 the pair  $\bigl(c^{(0)}_{\varepsilon,\theta}, v^{(0)}_{\varepsilon, \theta}\bigr)\in{\mathcal H}_0$ satisfies the identity (\ref{1inner.e1}).
Then there exists a constant $C>0$ independent of $\theta$ and $\ep$ such that
\[
\bigl\Vert u^\ep_{\theta}-
u^{(0)}_{\varepsilon,\theta}
\bigr\Vert_{L^2(Q)} \le C \ep \norm{F}{L^2(Q)}.
\]

\end{theorem}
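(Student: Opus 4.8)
The plan is to estimate the difference $w_{\varepsilon,\theta}:=u^\varepsilon_\theta-u^{(0)}_{\varepsilon,\theta}$ by a variational argument, splitting the analysis according to whether $|\theta|\le1$ (inner region) or $|\theta|\ge1$. We will use the formal asymptotic expansion from Section 5 as a template: write $U_{\varepsilon,\theta}:=u^{(0)}_{\varepsilon,\theta}+\varepsilon u^{(1)}_{\varepsilon,\theta}+\varepsilon^2 R_{\varepsilon,\theta}$ (with $u^{(1)}$ the corrected cell solution from Corollary \ref{cor3} in the inner region, or the $\varepsilon\theta$-version from Lemma \ref{auxiliarylemma2} when $|\theta|\ge1$, and $R$ furnished by Lemma \ref{lem.a1} or Lemma \ref{lem.a2}), and substitute $U_{\varepsilon,\theta}$ into the form ${\mathfrak b}_{\varepsilon,\theta}(\cdot,\cdot)+(\cdot,\cdot)_{L^2(Q)}$ associated with \eqref{bp:eq1}. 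By construction of $u^{(0)}$, $u^{(1)}$, $R$ via the matched-coefficient equations \eqref{u0eq}, \eqref{fa:eq3}, \eqref{u2eq}, the contributions at orders $\varepsilon^{-2}$, $\varepsilon^{-1}$, $\varepsilon^{0}$ cancel, leaving a residual functional $\langle{\mathcal E}_{\varepsilon,\theta},\varphi\rangle$ over $\varphi\in H^1_\#(Q)$ whose norm is $O(\varepsilon)\|F\|_{L^2(Q)}$, uniformly in $\theta$, by the a priori estimates \eqref{u0est}, \eqref{u1est} (resp. \eqref{uu0est}, \eqref{vv0est}, \eqref{uu1est}) and \eqref{R1est} (resp. the bound in Lemma \ref{lem.a2}). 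Since ${\mathcal B}_{\varepsilon,\theta}+I$ is bounded below by $I$, testing the equation for $u^\varepsilon_\theta-U_{\varepsilon,\theta}$ against itself gives $\|u^\varepsilon_\theta-U_{\varepsilon,\theta}\|_{L^2(Q)}\le\|u^\varepsilon_\theta-U_{\varepsilon,\theta}\|_{H^1(Q)}\le C\varepsilon\|F\|_{L^2(Q)}$, and then the triangle inequality with $\|\varepsilon u^{(1)}_{\varepsilon,\theta}+\varepsilon^2 R_{\varepsilon,\theta}\|_{L^2(Q)}\le C\varepsilon\|F\|_{L^2(Q)}$ (again from the cited estimates) yields the claim.

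In the inner region $|\theta|\le1$ this is essentially a direct computation: all powers of $\theta$ appearing in the residual are bounded, the cell correctors are controlled by Lemma \ref{auxiliarylemma}, and one does not even need the full strength of the $\theta$-dependence since $|\varepsilon\theta|\le\varepsilon$. Note that \eqref{inner.e1} is precisely the solvability condition for \eqref{u2eq} with the projected right-hand side ${\mathcal P}_{\rm f}F$, so the pair $(c^{(0)}_{\varepsilon,\theta},v^{(0)}_{\varepsilon,\theta})$ from \eqref{1inner.e1} agrees up to an $O(\varepsilon)$ error with the solution of \eqref{inner.e1} by Lemma \ref{cor1}; thus it suffices to prove the estimate with $u^{(0)}_{\varepsilon,\theta}$ replaced by ${\mathcal I}$ applied to the solution of \eqref{inner.e1}, which is the expansion used in Section 5.

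The main obstacle is the region $|\theta|\ge1$, where the naive expansion degenerates: the operator $-(\nabla+{\rm i}\varepsilon\theta)\cdot A_1(\nabla+{\rm i}\varepsilon\theta)$ governing the cell problem has lowest eigenvalue $\sim|\varepsilon\theta|^2$ on the space transverse to $H^1_0(Q_0)$, which can be as small as $\varepsilon^2$, so inverting it costs a factor $|\varepsilon\theta|^{-2}$; meanwhile the residual that must be inverted carries compensating powers of $\theta$. The delicate point is to track these powers precisely so that the $|\theta|^{-2}$-type gains from \eqref{cc0es} and from Lemma \ref{lem.a2} (with its split into the mean-zero part, estimated with one power of $|\varepsilon\theta|^{-1}$, and the mean part, estimated with $|\varepsilon\theta|^{-2}$ times $|\langle H_{\varepsilon,\theta},1\rangle|$, the latter being small because the solvability/orthogonality structure forces cancellation of the leading term) exactly balance the growth coming from the factors $\theta\cdot A_1\theta c^{(0)}$ and $\theta\cdot A_1\nabla u^{(1)}$ in $F_\theta$. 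This is why the corrector $u^{(1)}_{\varepsilon,\theta}$ is normalized by the nonstandard condition \eqref{eq:add1} rather than by a vanishing mean: the condition \eqref{eq:add1} is exactly what is needed to make $\langle H_{\varepsilon,\theta},1\rangle$ vanish for the relevant residual, so that Lemma \ref{lem.a2} applies with the favourable $|\varepsilon\theta|^{-1}$ rate. Once the bookkeeping of $\theta$-powers is done carefully, combining the two regimes gives the uniform $O(\varepsilon)$ bound.
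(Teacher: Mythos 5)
Your overall strategy --- splitting at $|\theta|=1$, expanding $U=u^{(0)}+\varepsilon u^{(1)}+\varepsilon^2R$, estimating the residual via the a priori bounds of Lemmas~\ref{auxiliarylemma}, \ref{auxiliarylemma2}, \ref{lem.a1}, \ref{lem.a2}, and closing with coercivity of ${\mathfrak b}_{\varepsilon,\theta}+(\cdot,\cdot)_{L^2}$ --- is exactly what the paper does, including the reduction of Case~1 to the $\varepsilon$-independent problem \eqref{inner.e1} via Lemma~\ref{cor1}.

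There is however one concrete misstatement in your treatment of the region $|\theta|\ge1$, and it is a step that would fail if carried out as you describe. You assert that the normalisation \eqref{eq:add1} is ``exactly what is needed to make $\langle H_{\varepsilon,\theta},1\rangle$ vanish.'' It does not vanish. The paper computes
\begin{equation*}
\mv{H_{\ep,\theta},1}=\int_Q A_0\bigl(\nabla+{\rm i}\ep\theta\bigr)v^{(0)}_{\ep,\theta}\cdot\overline{{\rm i}\ep\theta},
\end{equation*}
which is only of size $O(\varepsilon|\theta|)\norm{F}{L^2(Q)}$, not zero, and this $A_0$-contribution cannot be removed by any constant shift of $u^{(1)}_{\ep,\theta}$ since it depends on $v^{(0)}_{\ep,\theta}$, not on $u^{(1)}_{\ep,\theta}$. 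What \eqref{eq:add1} actually achieves is to annihilate the contribution of the term $-\ep\,\theta\cdot A_1\theta\, u^{(1)}_{\ep,\theta}$ to the mean. Consequently, both branches of Lemma~\ref{lem.a2} are genuinely needed: the $|\ep\theta|^{-1}$ rate acting on the mean-free part (which is $O(1)\norm{F}{L^2}$) and the $|\ep\theta|^{-2}$ rate acting on $|\mv{H_{\ep,\theta},1}|=O(\ep|\theta|)\norm{F}{L^2}$; these happen to contribute the same $|\ep\theta|^{-1}\norm{F}{L^2}$ to the bound on $R_{\ep,\theta}$. So while your proposed proof, once corrected, lands on the same conclusion by the same route, as written it relies on a cancellation that simply does not occur, and would collapse if one tried to dispense with the mean-part branch of Lemma~\ref{lem.a2} on the strength of it.
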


\begin{proof} To prove the result we consider $\theta \in \ep^{-1} Q'$ in two regions. 

Case 1:
$
\vert \theta \vert \le 1
.$ Let $U^{(1)}_{\ep,\theta} = u^{(0)}_{\theta} + \ep u^{(1)}_{\theta} + \ep^2 R_{\theta},$ where $u^{(0)}_{\theta}$, $u^{(1)}_{\theta}$ and $R_{\theta}$ are given by \eqref{inner.e1}, \eqref{fa:eq3} and Lemma \ref{lem.a1}, respectively.
Due to the fact that the functions $u^{(0)}_{\theta}$ and 
$u^{(0)}_{\varepsilon,\theta}$ are $\varepsilon$-close in $L^2(Q)$ uniformly in 
$\theta$ for $\vert \theta \vert \le 1$ ({\it cf.} Lemma \ref{cor1}), it is sufficient to prove that
\[
\bigl\Vert u^\ep_{\theta}-
U^{(1)}_{\ep,\theta}
\bigr\Vert_{L^2(Q)} \le C \ep \norm{F}{L^2(Q)}.
\]
By direct calculation we find that 
the difference $z^{(1)}_{\ep, \theta} : = u^\ep_\theta - U^{(1)}_{\ep, \theta}$ is the $H^1_{\#}(Q)$-solution of the equation
\begin{equation}
-\varepsilon^{-2}\left( \nabla+ {\rm i}\ep \theta\right) \cdot A^\ep\left(\nabla+ {\rm i}\ep \theta\right) z^{(1)}_{\ep, \theta} + z^{(1)}_{\ep, \theta} = 
F^{(1)}_{\ep,\theta},
\label{z1eq}
\end{equation}
where the coefficients for the non-positive powers of $\ep$ have cancelled due to the construction of $U^{(1)}_{\ep, \theta}.$ The right-hand side $F^{(1)}_{\ep,\theta}\in H^{-1}_{\#}(Q)$ of (\ref{z1eq}) takes the form
\[
F^{(1)}_{\ep,\theta}: = \sum_{n=1}^4 \ep^n T^{(n)}_{\theta},
\]
where
\begin{flalign}
T^{(1)}_{\theta} & : =   {\rm i}\bigl( \nabla \cdot A_1  \theta +  \theta \cdot A_1 \nabla \bigr)R_{\theta} + \bigl(  \nabla \cdot A_0 \nabla - \theta \cdot A_1 \theta - I \bigr) u^{(1)}_\theta  + {\rm i} \bigl( \nabla \cdot A_0  \theta +   \theta \cdot A_0 \nabla \bigr) u^{(0)}_{\theta}, \label{ja:eq4} \\
T^{(2)}_{\theta} & : = \bigl(  \nabla \cdot A_0 \nabla - \theta \cdot A_1 \theta - I \bigr) R_{\theta} + {\rm i} \bigl( \nabla \cdot A_0  \theta +   \theta \cdot A_0 \nabla \bigr) u^{(1)}_\theta - \theta \cdot A_0 \theta u^{(0)}_{\theta}, \label{ja:eq5} \\
T^{(3)}_{\theta} & : =  {\rm i} \bigl( \nabla \cdot A_0  \theta +   \theta \cdot A_0 \nabla \bigr)R_{\theta} - \theta \cdot A_0 \theta u^{(1)}_\theta , \label{ja:eq6} \\
T^{(4)}_{\theta} & : = - \theta \cdot A_0 \theta R_{\theta} \label{ja:eq6.1}
\end{flalign}
are elements of $H^{-1}_{\#} (Q).$
A straightforward calculation shows that equations \eqref{ja:eq4}--\eqref{ja:eq6.1} with the inequalities \eqref{u0est}, \eqref{u1est} and \eqref{R1est} imply the bound
\[
\norm{F^{(1)}_{\ep,\theta}}{H^{-1}_{\#}(Q)} \le C \ep \norm{F}{L^2(Q)}.
\]
Hence, the required inequality
$$
\norm{z^{(1)}_{\ep, \theta}}{L^2(Q)} \le C  \vert \ep\theta \vert \norm{F}{L^2(Q)}.
$$
follows.

Case 2:
$
\vert \theta\vert\ge 1 
.$ 
Let $U^{(2)}_{\ep,\theta} = u^{(0)}_{\varepsilon, \theta} + \ep u^{(1)}_{\ep, \theta} + \ep^2 R_{\ep, \theta},$ where 
$u^{(1)}_{\ep, \theta}$ is defined in Lemma \ref{auxiliarylemma2} and $R_{\ep, \theta}$ is given by 
Lemma \ref{lem.a2} for the right-hand side
$$
H_{\ep, \theta}: = F + {\rm i} \left( \nabla\cdot A_1\theta + \theta \cdot A_1 \nabla \right) u^{(1)}_{\ep, \theta} - \ep \theta \cdot A_1 \theta u^{(1)}_{\ep, \theta} + (\nabla + \mathrm{i} \ep \theta)\cdot A_0(\nabla + \mathrm{i} \ep \theta) v^{(0)}_{\ep, \theta} -\theta\cdot A_1\theta c^{(0)}_{\ep, \theta}-u^{(0)}_{\ep, \theta}.
$$
Notice that the following inequalities hold
\begin{align*}
\bigl\Vert H_{\ep,\theta} - \mv{H_{\ep,\theta},1}\bigr\Vert_{H^{-1}_\#(Q)} & \le C \norm{F}{L^2(Q)}, & \bigl\vert \mv{H_{\ep,\theta},1} \bigr\vert & \le C \vert \ep \theta \vert \norm{F}{L^2(Q)},
\end{align*}
for some constant $C>0$ independent of $\ep$ and $\theta$. These follow from Lemma \ref{auxiliarylemma2} and the estimates
\begin{flalign*}
\bigl\vert\mv{H_{\ep, \theta},\phi}\bigr\vert & =\biggl\vert\int_Q \Bigl (F \overline{\phi} - \mathrm{i} A_1 u^{(1)}_{\ep, \theta} \theta\cdot\overline{\nabla \phi} + \mathrm{i} \theta \cdot A_1 \nabla u^{(1)}_{\ep, \theta} \overline{\phi} - \ep \theta \cdot A_1 \theta u^{(1)}_{\ep, \theta} \overline{\phi}\Bigr)\\ & \ \ \ \ \ \ \ \ \ \ \ \ \ \ \ \ \ \ \ \ \ \ \ \ \ \ \ \ \ \ \ \ \ \  - \int_Q\Bigl(A_0 (\nabla + \mathrm{i} \ep \theta) v^{(0)}_{\ep, \theta}\cdot \overline{(\nabla + \mathrm{i} \ep \theta) \phi}  + \theta \cdot A_1 \theta c^{(0)}_{\ep, \theta} \overline{\phi} + u^{(0)}_{\ep, \theta} \overline{\phi}\Bigr)\biggr\vert\\
& \le C \left( \norm{F}{L^2(Q)} + \vert \theta \vert\bigl\Vert u^{(1)}_{\ep, \theta} \bigr\Vert_{H^1(Q)} +  \bigl\Vert (\nabla + \mathrm{i} \ep \theta )v^{(0)}_{\ep, \theta}\bigr\Vert_{L^2(Q_0)} + \vert \theta \vert^2\bigl\vert c^{(0)}_{\ep, \theta}\bigr\vert + \norm{u^{(0)}_{\ep, \theta}}{L^2(Q)}\right) \norm{\phi}{H^1(Q)}
\end{flalign*}
and 
\begin{flalign*}
\bigl\vert\mv{H_{\ep, \theta},1}\bigr\vert & =\biggl\vert \int_Q A_0 (\nabla + \mathrm{i} \ep \theta ) v^{(0)}_{\ep, \theta}\cdot\overline{\mathrm{i} \ep \theta}\biggr\vert \le C \vert \ep \theta \vert \bigl\Vert(\nabla + \mathrm{i}\ep \theta)v^{(0)}_{\ep, \theta}\bigr\Vert_{L^2(Q_0)}
\end{flalign*}
The assumptions of Lemma \ref{lem.a2} hold for $H_{\ep,\theta}$ which implies, along with the above inequalities, that the existence of a function $R_{\ep, \theta} \in H^1_{\#}(Q)$ is guaranteed such that
\begin{align}
\label{R2estt}
\norm{R_{\ep, \theta}}{H^1(Q)} \le C \frac{1}{\vert \ep \theta \vert}\norm{F}{L^2(Q)}.
\end{align}

By direct calculation we find that the ``error'' $z^{(2)}_{\ep, \theta} : = u^\ep_\theta - U^{(2)}_{\ep, \theta}$ is the $H^1_{\#}(Q)$-solution of the equation
\begin{flalign*}
-\varepsilon^{-2}\left( \nabla+ {\rm i}\ep \theta\right) \cdot A^\ep\left(\nabla+ {\rm i}\ep \theta\right) z^{(2)}_{\ep, \theta}  + z^{(2)}_{\ep, \theta}  = 
F^{(2)}_{\ep,\theta},
\end{flalign*}
where the coefficients for the non-positive powers of $\ep$ have cancelled due to the construction of $U^{(2)}_{\ep, \theta}.$ In the above equation the right-hand side $F^{(2)}_{\ep,\theta}\in H^{-1}_{\#}(Q)$ is given by 
\[
F^{(2)}_{\ep,\theta}: = \sum_{n=1}^4 \ep^n S^{(n)}_{\theta},
\]
where
\begin{flalign}
S^{(1)}_{\theta} & : =   \bigl(  \nabla \cdot A_0 \nabla  - I \bigr) u^{(1)}_{\ep, \theta} 
+ {\rm i} \bigl( \nabla \cdot A_0  \theta +   \theta \cdot A_0 \nabla \bigr) c^{(0)}_{\ep, \theta}, 
\label{jja:eq4} \\
S^{(2)}_{\theta} & : = \bigl(  \nabla \cdot A_0 \nabla - I \bigr) R_{\ep, \theta} + {\rm i} \bigl( \nabla \cdot A_0  \theta +   \theta \cdot A_0 \nabla \bigr) u^{(1)}_{\ep, \theta}
-\theta \cdot A_0 \theta c^{(0)}_{\ep, \theta}
, \label{jja:eq5} \\
S^{(3)}_{\theta} & : =  {\rm i} \bigl( \nabla \cdot A_0  \theta +   \theta \cdot A_0 \nabla \bigr)R_{\ep, \theta} - \theta \cdot A_0 \theta u^{(1)}_{\ep, \theta}, \label{jja:eq6} \\
S^{(4)}_{\theta} & : = - \theta \cdot A_0 \theta R_{\ep, \theta} \label{jja:eq6.1}
\end{flalign}
are elements of $H^{-1}_{\#} (Q).$ Equations \eqref{jja:eq4}--\eqref{jja:eq6.1} together with inequalities \eqref{cc0es}, \eqref{uu1est} and \eqref{R2estt} imply that
\[
\norm{F^{(2)}_{\ep,\theta}}{H^{-1}_{\#}(Q)} \le C \ep \norm{F}{L^2(Q)}.
\]
Therefore, the bound
$$
\bigl\Vert z^{(2)}_{\ep, \theta}\bigr\Vert_{H^1(Q)} \le C \ep \norm{F}{L^2(Q)}
$$
holds, and the result follows.
\end{proof}


\section{The outer expansion and principal term for $\mathcal{B}^{\text{hom}}_{\ep, \theta}$ in the outer region 
$\vert\theta\vert\ge\ep^{-1/2}
$.}
\label{outersection}

For fixed $\varkappa \neq 0$ we shall study the asymptotics of the following problem: find $w_{\ep,\varkappa}\in H^1_{\varkappa}(Q)$ such that
\begin{equation}
- \nabla\cdot \left( \ep^{-2} A_1 + A_0 \right) \nabla  w_{\ep,\varkappa}+ w_{\ep,\varkappa} = F,\ \ \ \ \ F \in L^2(Q). 
\label{outer.e1}
\end{equation}

Let us consider an asymptotic expansion for the solution to the above problem
of the form
\beq
w_{\ep,\varkappa}=\sum_{n=0}^\infty\varepsilon^{2n} w^{(n)}_\varkappa, \qquad\qquad w^{(n)}_\varkappa \in H^1_{\varkappa}(Q),\ \ \ n=0,1,2,...
\eeq{outer.e2}
Substituting \eqref{outer.e2} in \eqref{outer.e1} and comparing the coefficients in front of $\ep^{-2}$ on both sides of the resulting equation yields
$$
\nabla\cdot A_1\nabla w^{(0)}_\varkappa = 0,
$$
{\it i.e.} $w^{(0)}_\varkappa \in V(\varkappa)$ or, equivalently, $w^{(0)}_\varkappa \in H^1_0(Q_0)$, see \eqref{vkappa}. Further, comparing the  coefficients in front of $\ep^{0}$ yields
\beq
-\nabla\cdot A_1\nabla   w^{(1)}_{\varkappa}= F + \nabla \cdot A_0 \nabla w^{(0)}_\varkappa - w^{(0)}_\varkappa .
\eeq{outer.e3}
The existence of a solution to \eqref{outer.e3} is guaranteed by  Lemma \ref{bp:lem2} if, and only if,  $w^{(0)}_\varkappa$ satisfies the identity
\begin{equation}
\label{outer.e4}
\int_{Q_0}\bigl(A_0 \nabla w^{(0)}_{\varkappa}\cdot\nabla \varphi + w^{(0)}_{\varkappa} \varphi\bigr)= \mv{F, \varphi} \quad \forall \varphi \in H^1_0(Q_0).
\end{equation}
Furthermore, by Lemma \ref{bp:lem2} and \eqref{outer.e3} the unique part of such a solution satisfies the following inequality 
\begin{equation}
\label{outer.ee4}
\bigl\Vert P_{V^{\perp}(\varkappa)}w^{(1)}_{\varkappa}\bigr\Vert_{H^1(Q)} \le \frac{C}{\vert \varkappa \vert^2}   \norm{F}{L^2(Q)},
\end{equation}
for some constant $C$ independent of $\varkappa$.
Existence and uniqueness of $w^{(0)}_\varkappa$ is implied by the ellipticity of $A_0$ in $Q_0$ and standard ellipticity estimates give the following inequality
$$
\norm{w^{(0)}_{\varkappa}}{H^1_0(Q_0)} \le C \norm{F}{L^2(Q)}.
$$
Comparing the powers of $\ep^{2n}$, for $n \ge 1$, yields 
\beq
-\nabla\cdot A_1\nabla   w^{(n+1)}_{\varkappa}= \nabla \cdot A_0 \nabla w^{(n)}_\varkappa - w^{(n)}_\varkappa .
\eeq{outer.e5}
The existence of a solution to \eqref{outer.e5} is guaranteed by requiring that $P_{V(\varkappa)} w^{(n)}_\varkappa$ satisfies the identity
\begin{multline}
\label{outer.e6}
\int_{Q_0}\bigl(A_0 \nabla P_{V(\varkappa)} w^{(n)}_\varkappa\cdot \nabla \varphi + P_{V(\varkappa)} w^{(n)}_\varkappa \varphi\bigr)=-\int_{Q_0} \bigl(A_0 \nabla P_{V^\perp(\varkappa)} w^{(n)}_\varkappa\cdot\nabla \varphi - P_{V^\perp(\varkappa)} w^{(n)}_\varkappa \varphi\bigr) \quad \forall \varphi \in H^1_0(Q_0).
\end{multline}
Equation \eqref{outer.e6} implies
$$
\bigl\Vert P_{V(\varkappa)} w^{(n)}_\varkappa\bigr\Vert_{H^1(Q)} \le C \bigl\Vert P_{V^\perp(\varkappa)} w^{(n)}_\varkappa\bigr\Vert_{H^1(Q)}
$$
for some constant $C$. Therefore, by Lemma \ref{bp:lem2} there exists a constant $C>0$ independent of $\varkappa$ such that
\begin{equation*}
\bigl\Vert P_{V^{\perp}(\varkappa)}w^{(n+1)}_{\varkappa}\bigr\Vert_{H^1(Q)} \le \frac{C}{\vert \varkappa \vert^2}\bigl\Vert P_{V^{\perp}(\varkappa)}w^{(n)}_{\varkappa}\bigr\Vert_{H^1(Q)}.
\end{equation*}
In particular, by recalling \eqref{outer.ee4} we find that 
\begin{equation}
\label{outer.ee6}
\norm{w^{(n)}_{\varkappa}}{H^1(Q)} \le \frac{C}{\vert \varkappa \vert^{2n}}   \norm{F}{L^2(Q)}.
\end{equation}
Now constructing the function
$$
U^{(N)}_{\ep,\varkappa} = \sum_{n=0}^{N }\ep^{2n}w^{(n)}_\varkappa\in H^1_{\varkappa}(Q),
$$
we have the following result.

\begin{theorem}
\label{outer.lem1}
Let $w_{\ep,\varkappa}$ be the solution to \eqref{outer.e1}.  Then for any positive integer $N$ there exists a constant $C_N>0$ independent of $\varkappa$ and $\ep$ such that
\begin{equation*}
\bigl\Vert w_{\ep,\varkappa} - U^{(N)}_{\ep,\varkappa}\bigr\Vert_{H^1(Q)} \le C_N\left(\frac{\ep}{\vert \varkappa \vert} \right)^{2N}  \norm{F}{L^2(Q)}.
\end{equation*}
In particular,
\[
\bigl\Vert w_{\ep.\varkappa}- w^{(0)}_{\varkappa}\bigr\Vert_{H^1(Q)} \le  C_N\left(\frac{\ep}{\vert \varkappa \vert} \right)^{2N}  \norm{F}{L^2(Q)}.
\]
\end{theorem}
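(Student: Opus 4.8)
The plan is the standard two-step construction: first show that the truncated series $U^{(N)}_{\ep,\varkappa}$ solves \eqref{outer.e1} up to a residual of size $O\bigl((\ep/\vert\varkappa\vert)^{2N}\bigr)$ in $H^{-1}_\varkappa(Q)$, and then convert this residual bound into an $H^1(Q)$-estimate for the error $w_{\ep,\varkappa}-U^{(N)}_{\ep,\varkappa}$ by invoking the uniform coercivity (in both $\ep$ and $\varkappa$) of the sesquilinear form attached to \eqref{outer.e1}.

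First I would substitute $U^{(N)}_{\ep,\varkappa}=\sum_{n=0}^N\ep^{2n}w^{(n)}_\varkappa$ into the operator $-\nabla\cdot(\ep^{-2}A_1+A_0)\nabla+I$ and collect the coefficients of the powers $\ep^{2n}$. The $\ep^{-2}$-coefficient vanishes because $w^{(0)}_\varkappa\in V(\varkappa)$; the $\ep^0$-coefficient equals $F$ by \eqref{outer.e3}; and the coefficients of $\ep^{2n}$ for $1\le n\le N-1$ vanish by \eqref{outer.e5}. The only term that survives is of order $\ep^{2N}$, namely $\ep^{2N}\bigl(-\nabla\cdot A_0\nabla w^{(N)}_\varkappa+w^{(N)}_\varkappa\bigr)$. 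Consequently the error $z_{\ep,\varkappa}:=w_{\ep,\varkappa}-U^{(N)}_{\ep,\varkappa}\in H^1_\varkappa(Q)$ solves
\[
-\nabla\cdot(\ep^{-2}A_1+A_0)\nabla z_{\ep,\varkappa}+z_{\ep,\varkappa}=-\ep^{2N}\bigl(-\nabla\cdot A_0\nabla w^{(N)}_\varkappa+w^{(N)}_\varkappa\bigr)=:G_{\ep,\varkappa}.
\]
Testing $G_{\ep,\varkappa}$ against $\phi\in H^1_\varkappa(Q)$, using the boundedness of $A_0$ and the already-established estimate \eqref{outer.ee6} for $w^{(N)}_\varkappa$, gives
\[
\bigl\Vert G_{\ep,\varkappa}\bigr\Vert_{H^{-1}_\varkappa(Q)}\le C\,\ep^{2N}\bigl\Vert w^{(N)}_\varkappa\bigr\Vert_{H^1(Q)}\le C_N\,\frac{\ep^{2N}}{\vert\varkappa\vert^{2N}}\,\norm{F}{L^2(Q)}.
\]

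Next I would close the argument via an energy estimate. The form $\int_Q(\ep^{-2}A_1+A_0)\nabla z\cdot\overline{\nabla z}+\int_Q\vert z\vert^2$ associated with \eqref{outer.e1} is bounded below by $\int_Q A_0\nabla z\cdot\overline{\nabla z}+\int_Q\vert z\vert^2\ge\min\{\nu,1\}\,\norm{z}{H^1(Q)}^2$, uniformly in $\ep$ and $\varkappa$, since $A_0\ge\nu I$ on all of $Q$ and the $\ep^{-2}A_1$ contribution is non-negative. Pairing the equation for $z_{\ep,\varkappa}$ with $z_{\ep,\varkappa}$ itself then yields $\norm{z_{\ep,\varkappa}}{H^1(Q)}\le C\bigl\Vert G_{\ep,\varkappa}\bigr\Vert_{H^{-1}_\varkappa(Q)}\le C_N(\ep/\vert\varkappa\vert)^{2N}\norm{F}{L^2(Q)}$, which is the asserted bound. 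The displayed ``in particular'' consequence follows by combining this with the triangle inequality and the bound $\bigl\Vert U^{(N)}_{\ep,\varkappa}-w^{(0)}_\varkappa\bigr\Vert_{H^1(Q)}\le\sum_{n=1}^N\ep^{2n}\bigl\Vert w^{(n)}_\varkappa\bigr\Vert_{H^1(Q)}$, each term of which is controlled by \eqref{outer.ee6}.

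In contrast to the inner region, I do not expect a genuine obstacle here: for each fixed $\varkappa\neq0$ the operator in \eqref{outer.e1} stays uniformly coercive on $H^1_\varkappa(Q)$ as $\ep\to0$, so there is no boundary-layer phenomenon and no loss of ellipticity to contend with. The only point requiring real care is the bookkeeping of which residual terms survive the truncation — and that is precisely what the definitions of $w^{(0)}_\varkappa,\,w^{(1)}_\varkappa,\ldots$ through \eqref{outer.e3} and \eqref{outer.e5} were engineered to handle. (One should also note that the constant in \eqref{outer.ee6}, and hence $C_N$, may grow with $N$; this is immaterial since $N$ is fixed.)
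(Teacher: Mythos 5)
Your proof is correct and follows essentially the same route as the paper: substitute $U^{(N)}_{\ep,\varkappa}$ into \eqref{outer.e1}, observe that the definitions \eqref{outer.e3} and \eqref{outer.e5} force all non-positive and intermediate powers of $\ep$ to cancel, leaving a single residual of order $\ep^{2N}$, then bound that residual via \eqref{outer.ee6} and close with the uniform-in-$(\ep,\varkappa)$ coercivity of the form. Your sign for the residual is in fact the correct one (the paper's displayed formula has an inconsequential sign typo), and your filling-in of the $H^{-1}$-bound and energy estimate is exactly what the paper summarises as ``the standard ellipticity estimates''.
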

\begin{proof}
Substituting $U^{(N)}_{\ep,\varkappa}$ in to \eqref{outer.e1} and equating powers of $\ep$ yields
$$
- \nabla \cdot  \left( \ep^{-2} A_1 + A_0 \right)  \nabla \bigl( w_{\ep,\varkappa}- U^{(N)}_{\ep,\varkappa} \bigr) + w_{\ep,\varkappa}- U^{(N)}_{\ep,\varkappa}  = \ep^{2N} \bigl( - \nabla \cdot  A_0  \nabla w^{(N)}_{\varkappa}+w^{(N)}_{\varkappa}\bigr).
$$
The results follow by employing \eqref{outer.ee6} and the standard ellipticity estimates.
\end{proof}
Denote by $[g]$ the  multiplication operator for a given function $g$ and denote by $\mathcal{B}_0$ to be the operator associated with the problem \eqref{outer.e4} such that $w^{(0)}_\varkappa = \left( \mathcal{B}_0 + I \right)^{-1}\mathcal{P}_0 F$, where $\mathcal{P}_0$ is the orthogonal projection of $L^2(Q)$ onto $H^1_0(Q_0).$ Theorem \ref{outer.lem1} implies that that $\mathcal{B}_0$ is $\ep$-close to 
${\mathcal B}^{\rm hom}_{\varepsilon,\theta}$ in the region $\vert \theta \vert\ge\ep^{-1/2},$ 
in the following sense.
\begin{corollary}
\label{cor2}
$$
\bigl\Vert [{\rm e}^{-{\rm i}\ep \theta \cdot}] \left( \mathcal{B}_0 + I \right)^{-1} \mathcal{P}_0\,[{\rm e}^{{\rm i}\ep \theta \cdot}] - \mathcal{I}({\mathcal B}^{\rm hom}_{\varepsilon,\theta}+I)^{-1}\mathcal{I}^{-1}\mathcal{P}_{\rm f} \bigr\Vert_{L^2(Q)} \le C \ep, 
$$ 
for all $\theta\in\varepsilon^{-1}Q'$ such that $\vert \theta\vert\ge\ep^{-1/2}$.
\end{corollary}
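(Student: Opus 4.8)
The plan is to pass through the fibre operators $\mathcal{B}_{\ep,\theta}$ and to combine two facts that are already available: the fibrewise form of Theorem \ref{maintheorem} (equivalently, Theorem \ref{thetaresest}), which holds for \emph{all} $\theta\in\ep^{-1}Q'$, and the leading-order outer estimate of Theorem \ref{outer.lem1}. Fix $\theta$ with $\vert\theta\vert\ge\ep^{-1/2}$ and set $\varkappa=\ep\theta\in Q'$; note $\varkappa\neq0$ since $\vert\ep\theta\vert\ge\ep^{1/2}>0$.

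First I would record the Bloch gauge identity relating \eqref{outer.e1} to $\mathcal{B}_{\ep,\theta}$. Writing $w=\exp({\rm i}\ep\theta\cdot)\widetilde w$ with $\widetilde w\in H^1_{\#}(Q)$, one has $\nabla w=\exp({\rm i}\ep\theta\cdot)(\nabla+{\rm i}\ep\theta)\widetilde w$ and $\ep^{-2}A_1+A_0=\ep^{-2}A^\ep$, so the problem \eqref{outer.e1} with right-hand side $\exp({\rm i}\ep\theta\cdot)G$ is equivalent to $-\ep^{-2}(\nabla+{\rm i}\ep\theta)\cdot A^\ep(\nabla+{\rm i}\ep\theta)\widetilde w+\widetilde w=G$, that is, $\widetilde w=(\mathcal{B}_{\ep,\theta}+I)^{-1}G$. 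Denoting by $\mathcal{W}_{\ep,\varkappa}:F\mapsto w_{\ep,\varkappa}$ the solution operator of \eqref{outer.e1}, this says precisely that
\[
(\mathcal{B}_{\ep,\theta}+I)^{-1}=[{\rm e}^{-{\rm i}\ep\theta\cdot}]\,\mathcal{W}_{\ep,\ep\theta}\,[{\rm e}^{{\rm i}\ep\theta\cdot}].
\]
The multiplications $[{\rm e}^{\pm{\rm i}\ep\theta\cdot}]$ are unitary on $L^2(Q)$, and by \eqref{outer.e4} the leading term $w^{(0)}_\varkappa=(\mathcal{B}_0+I)^{-1}\mathcal{P}_0F$ does not depend on $\varkappa$. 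Combining the $N=1$ case of Theorem \ref{outer.lem1} with the bound \eqref{outer.ee6} for $w^{(1)}_\varkappa$ gives $\bigl\Vert w_{\ep,\varkappa}-w^{(0)}_\varkappa\bigr\Vert_{H^1(Q)}\le C(\ep/\vert\varkappa\vert)^2\norm{F}{L^2(Q)}$, so that, after conjugation,
\[
\bigl\Vert(\mathcal{B}_{\ep,\theta}+I)^{-1}-[{\rm e}^{-{\rm i}\ep\theta\cdot}](\mathcal{B}_0+I)^{-1}\mathcal{P}_0[{\rm e}^{{\rm i}\ep\theta\cdot}]\bigr\Vert_{L^2(Q)\to L^2(Q)}=\bigl\Vert\mathcal{W}_{\ep,\ep\theta}-(\mathcal{B}_0+I)^{-1}\mathcal{P}_0\bigr\Vert_{L^2(Q)\to L^2(Q)}\le C\Bigl(\frac{\ep}{\vert\ep\theta\vert}\Bigr)^{2}=\frac{C}{\vert\theta\vert^{2}}\le C\ep,
\]
the last inequality using exactly the defining property $\vert\theta\vert\ge\ep^{-1/2}$ of the outer region. (This is also why the single term $N=1$ of the outer expansion already suffices, with no higher-order correctors needed.)

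Next I would invoke the fibrewise form of the main estimate: as noted at the start of Section \ref{justification} and as a consequence of Theorem \ref{thetaresest}, there is $C>0$ independent of $\ep$ and $\theta$ such that
\[
\bigl\Vert(\mathcal{B}_{\ep,\theta}+I)^{-1}-\mathcal{I}(\mathcal{B}^{\rm hom}_{\ep,\theta}+I)^{-1}\mathcal{I}^{-1}\mathcal{P}_{\rm f}\bigr\Vert_{L^2(Q)\to L^2(Q)}\le C\ep\qquad\text{for all }\theta\in\ep^{-1}Q'.
\]
Adding the two displays by the triangle inequality yields the assertion of Corollary \ref{cor2} for $\vert\theta\vert\ge\ep^{-1/2}$.

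The substance of the argument is thus light; the care needed is in the bookkeeping. The points to watch are (i) setting up the gauge transform with the correct sign conventions so that $\mathcal{W}_{\ep,\ep\theta}$ and $(\mathcal{B}_{\ep,\theta}+I)^{-1}$ really are conjugate via $[{\rm e}^{\pm{\rm i}\ep\theta\cdot}]$; and (ii) using that the constant in Theorem \ref{outer.lem1} is uniform in $\varkappa$, which — together with the elementary identity $(\ep/\vert\ep\theta\vert)^2=\vert\theta\vert^{-2}$ — is precisely what converts the $\varkappa$-dependent outer bound into one that is uniform over the entire region $\vert\theta\vert\ge\ep^{-1/2}$. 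No genuinely new estimate is required.
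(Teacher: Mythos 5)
Your proof is correct and supplies the argument that the paper leaves implicit (no explicit proof of Corollary \ref{cor2} is given). One point you make that is worth keeping: the paper's remark that Theorem \ref{outer.lem1} ``implies'' Corollary \ref{cor2} is abbreviated --- the gauge-conjugated form of Theorem \ref{outer.lem1} only compares the outer operator $[\mathrm{e}^{-\mathrm{i}\ep\theta\cdot}](\mathcal{B}_0+I)^{-1}\mathcal{P}_0[\mathrm{e}^{\mathrm{i}\ep\theta\cdot}]$ with the true fibre resolvent $(\mathcal{B}_{\ep,\theta}+I)^{-1}$, and the fibrewise form of the main estimate (Theorem \ref{thetaresest}) is genuinely needed to pass from that to $\mathcal{I}(\mathcal{B}^{\rm hom}_{\ep,\theta}+I)^{-1}\mathcal{I}^{-1}\mathcal{P}_{\rm f}$, after which the triangle inequality closes the argument. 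The gauge identity, the unitarity of $[\mathrm{e}^{\pm\mathrm{i}\ep\theta\cdot}]$ on $L^2(Q)$, and the arithmetic $(\ep/\vert\varkappa\vert)^{2}=\vert\theta\vert^{-2}\le\ep$ on the region $\vert\theta\vert\ge\ep^{-1/2}$ are all handled correctly.
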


Corollary \ref{cor2} and Theorem \ref{thetaresest} imply that in the region 
$\vert \varkappa\vert \ge \ep^{1/2}$ the term $w^{(0)}_\varkappa$ is the principle term in the approximation to 
$w_{\ep,\varkappa}(y) =\exp({\rm i}\varkappa \cdot y)u^\ep_{\ep^{-1} \varkappa}(y),$ $y\in Q,$ in the ``slow" variable $\varkappa$. Further, Lemma \ref{cor1} states that in the region $\vert \theta \vert \le 1$ the function $u^{(0)}_\theta = \mathcal{I}(c^{(0)}_\theta, v^{(0)}_\theta)$ is the principle term in the approximation to $ u^\ep_\theta$ in the ``fast" variable $\theta = \varkappa / \ep$. This leads to the presence of a boundary layer in the Bloch space in the region $1\le \vert \theta\vert \le\varepsilon^{-1/2},$ where neither the ``outer" operator $\mathcal{B}_0$ nor the ``inner" operator $\mathcal{B}^{\text{hom}}_{0, \theta}$ are suitable for order $O(\ep)$ estimates. This leads to an interpretation of $\mathcal{B}^{\text{hom}}_{\ep , \theta}$ as being the non-trivial matching of $\mathcal{B}_{0}$ and $\mathcal{B}^{\text{hom}}_{0, \theta}$ in the boundary layer necessary to achieve order $O(\ep)$ estimates. This interpretation is further supported by the following result, which states that by extending $\mathcal{B}_{0}$ and $\mathcal{B}^{\text{hom}}_{0, \theta}$ in to the boundary layer one can only achieve $O(\ep^\alpha)$ estimates for any $\alpha \in (0, 1)$.
\begin{corollary}
\label{lembl}
For all $\varepsilon>0,$ $\alpha \in (0, 1),$ denote by $B_{\varepsilon^{\alpha-1}}(0)$ the set $\{\theta: \vert\theta\vert<\varepsilon^{\alpha-1}\}.$ The operators
$$
\mathcal{S}^{\ep,\alpha}:=\mathcal{U}^{-1}_\ep \left( \int^\oplus_{\theta \in B_{\ep^{\alpha-1}}(0)} \mathcal{I} \left( \mathcal{B}^{\text{\rm hom}}_{0 , \ep} + I \right)^{-1} \mathcal{I}^{-1} \mathcal{P}_{\rm f} \ \mathrm{d}\theta + \int^\oplus_{\theta \in \ep^{-1}Q' \backslash B_{\ep^{\alpha-1}}(0)}[{\rm e}^{-{\rm i}\ep \theta \cdot}] \left( \mathcal{B}_{0} + I \right)^{-1} \mathcal{P}_0[{\rm e}^{{\rm i}\ep \theta \cdot}]\ \mathrm{d}\theta \right) \mathcal{U}_\ep
$$
are $\ep^\alpha$-close to $\big( \mathcal{A}^\ep + I\big)^{-1}$ in the operator norm, i.e. there exists a constant $C=C(\alpha)$ independent of $\varepsilon$ such that
$$
\bigl\Vert\big( \mathcal{A}^\ep + I\big)^{-1} - \mathcal{S}^{\ep,\alpha}\bigr\Vert_{L^2(\mathbb{R}^d) \rightarrow L^2(\mathbb{R}^d)} \le C \ep^\alpha.
$$

\end{corollary}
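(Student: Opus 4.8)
The plan is to carry out the same fibration reduction as in the proof of Theorem~\ref{maintheorem}. Since $\mathcal{U}_\varepsilon$ is unitary and both $(\mathcal{A}^\varepsilon+I)^{-1}$ and $\mathcal{S}^{\varepsilon,\alpha}$ are decomposable through $\mathcal{U}_\varepsilon$ as direct integrals over $\theta\in\varepsilon^{-1}Q'$, the operator-norm difference $\Vert(\mathcal{A}^\varepsilon+I)^{-1}-\mathcal{S}^{\varepsilon,\alpha}\Vert$ equals the essential supremum over $\theta$ of the fibre differences $\Vert(\mathcal{B}_{\varepsilon,\theta}+I)^{-1}-K_{\varepsilon,\theta}\Vert_{L^2(Q)\to L^2(Q)}$, where $K_{\varepsilon,\theta}=\mathcal{I}(\mathcal{B}^{\rm hom}_{0,\theta}+I)^{-1}\mathcal{I}^{-1}\mathcal{P}_{\rm f}$ when $\vert\theta\vert<\varepsilon^{\alpha-1}$ and $K_{\varepsilon,\theta}=[{\rm e}^{-{\rm i}\varepsilon\theta\cdot}](\mathcal{B}_0+I)^{-1}\mathcal{P}_0[{\rm e}^{{\rm i}\varepsilon\theta\cdot}]$ when $\vert\theta\vert\ge\varepsilon^{\alpha-1}$. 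So it suffices to bound the fibre difference by $C(\alpha)\varepsilon^\alpha$, uniformly in $\theta$ and $\varepsilon$, in each of these two ranges.

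For $\vert\theta\vert<\varepsilon^{\alpha-1}$ I would combine two facts. First, Theorem~\ref{thetaresest}, which is exactly the assertion that $\Vert(\mathcal{B}_{\varepsilon,\theta}+I)^{-1}-\mathcal{I}(\mathcal{B}^{\rm hom}_{\varepsilon,\theta}+I)^{-1}\mathcal{I}^{-1}\mathcal{P}_{\rm f}\Vert\le C\varepsilon$ for \emph{every} $\theta\in\varepsilon^{-1}Q'$. Second, a sharpened form of Lemma~\ref{cor1}: the hypothesis $\vert\theta\vert\le1$ enters its proof only at the final step, where the factor $\varepsilon\vert\theta\vert$ in \eqref{in.e1} is replaced by $\varepsilon$; all the intermediate bounds (the \emph{a priori} estimate \eqref{in.e4}, the coercivity and Poincar\'e arguments on $Q_0$, and the treatment of the zeroth-order and cross terms in \eqref{in,ee}) are $\theta$-uniform because $\varepsilon\theta$ always lies in the bounded set $Q'$. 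Retaining the factor $\vert\theta\vert$ one therefore gets, for \emph{all} $\theta\in\varepsilon^{-1}Q'$,
\[
\bigl\Vert\mathcal{I}(\mathcal{B}^{\rm hom}_{\varepsilon,\theta}+I)^{-1}\mathcal{I}^{-1}\mathcal{P}_{\rm f}-\mathcal{I}(\mathcal{B}^{\rm hom}_{0,\theta}+I)^{-1}\mathcal{I}^{-1}\mathcal{P}_{\rm f}\bigr\Vert_{L^2(Q)\to L^2(Q)}\le C\varepsilon\vert\theta\vert ,
\]
whose right-hand side is $\le C\varepsilon^\alpha$ on the range $\vert\theta\vert<\varepsilon^{\alpha-1}$. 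A triangle inequality (recalling $\varepsilon\le\varepsilon^\alpha$ for $\varepsilon\le1$) then closes this range.

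For $\vert\theta\vert\ge\varepsilon^{\alpha-1}$ I would pass to the quasiperiodic picture of Section~\ref{outersection}. Writing $\varkappa:=\varepsilon\theta$ and conjugating \eqref{bp:eq1} by the unitary multiplication operator $[{\rm e}^{{\rm i}\varkappa\,\cdot}]$, a direct computation shows that $(\mathcal{B}_{\varepsilon,\theta}+I)^{-1}F=[{\rm e}^{-{\rm i}\varkappa\,\cdot}]w_{\varepsilon,\varkappa}$ and $[{\rm e}^{-{\rm i}\varepsilon\theta\cdot}](\mathcal{B}_0+I)^{-1}\mathcal{P}_0[{\rm e}^{{\rm i}\varepsilon\theta\cdot}]F=[{\rm e}^{-{\rm i}\varkappa\,\cdot}]w^{(0)}_\varkappa$, where $w_{\varepsilon,\varkappa}\in H^1_\varkappa(Q)$ solves \eqref{outer.e1} with right-hand side ${\rm e}^{{\rm i}\varkappa\,\cdot}F$ and $w^{(0)}_\varkappa$ is the corresponding leading term. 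Hence the fibre difference has the same $L^2(Q)$-norm as $w_{\varepsilon,\varkappa}-w^{(0)}_\varkappa$, and $\Vert{\rm e}^{{\rm i}\varkappa\,\cdot}F\Vert_{L^2(Q)}=\Vert F\Vert_{L^2(Q)}$; Theorem~\ref{outer.lem1} thus bounds it by $C_N(\varepsilon/\vert\varkappa\vert)^{2N}\Vert F\Vert_{L^2(Q)}$ for every positive integer $N$. Since $\vert\theta\vert\ge\varepsilon^{\alpha-1}$ gives $\vert\varkappa\vert\ge\varepsilon^\alpha$, this is $\le C_N\varepsilon^{2N(1-\alpha)}\Vert F\Vert_{L^2(Q)}$, and choosing $N=N(\alpha)$ with $2N(1-\alpha)\ge\alpha$ (possible as $\alpha<1$) makes it $\le C(\alpha)\varepsilon^\alpha\Vert F\Vert_{L^2(Q)}$. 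Taking the essential supremum over $\theta$ of the two estimates yields $\Vert(\mathcal{A}^\varepsilon+I)^{-1}-\mathcal{S}^{\varepsilon,\alpha}\Vert\le C(\alpha)\varepsilon^\alpha$.

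The main obstacle I anticipate is verifying the sharpened Lemma~\ref{cor1}: one must check that no constant in its proof secretly grows with $\vert\theta\vert$ once $\vert\theta\vert>1$ — the key point being that $\vert\varepsilon\theta\vert\le\mathrm{diam}\,Q'$, so the terms carrying an explicit $\varepsilon\theta$ are controlled by $\varepsilon\vert\theta\vert$ times $\theta$-independent quantities, while the Poincar\'e inequalities on $Q_0$ used there are themselves $\theta$-free. The remaining point, the conjugation identity in the outer range, is routine but requires care in tracking the extra factor ${\rm e}^{{\rm i}\varkappa\,\cdot}$ that the twist produces on the right-hand side of \eqref{outer.e1}.
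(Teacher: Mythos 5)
Your strategy---reduce to fibres via the unitarity of $\mathcal{U}_\varepsilon$, sharpen Lemma~\ref{cor1} to retain the $\varepsilon\vert\theta\vert$ factor throughout $\varepsilon^{-1}Q'$, and use the quasiperiodic conjugation in the outer region---is precisely the route that Sections 5--8 of the paper set up, and the first two-thirds of it is sound. In particular, the sharpened Lemma~\ref{cor1} does hold for all $\theta\in\varepsilon^{-1}Q'$: every step of its proof (the \emph{a priori} bound \eqref{in.e4}, the Poincar\'{e} inequalities on $Q_0$, and the estimates of the cross terms in \eqref{in,ee}) is $\theta$-uniform because $\vert\varepsilon\theta\vert$ stays bounded by $\operatorname{diam}Q'$, so \eqref{in.e1} with the factor $\varepsilon\vert\theta\vert$ is valid over the whole range and gives $C\varepsilon^\alpha$ on $\vert\theta\vert<\varepsilon^{\alpha-1}$. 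Your conjugation identity identifying $[{\rm e}^{-{\rm i}\varepsilon\theta\cdot}](\mathcal{B}_0+I)^{-1}\mathcal{P}_0[{\rm e}^{{\rm i}\varepsilon\theta\cdot}]F$ with ${\rm e}^{-{\rm i}\varkappa\cdot}w^{(0)}_\varkappa$ for the twisted datum ${\rm e}^{{\rm i}\varkappa\cdot}F$ is also correct.

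The difficulty is the one step you deferred to the paper: the ``in particular'' estimate in Theorem~\ref{outer.lem1}, namely $\bigl\Vert w_{\varepsilon,\varkappa}-w^{(0)}_\varkappa\bigr\Vert_{H^1(Q)}\le C_N(\varepsilon/\vert\varkappa\vert)^{2N}\Vert F\Vert_{L^2(Q)}$ for \emph{every} $N$, which you use to beat the exponent by taking $N$ large. That bound does not follow from the first part of the theorem. The theorem controls $w_{\varepsilon,\varkappa}-U^{(N)}_{\varepsilon,\varkappa}$ to order $(\varepsilon/\vert\varkappa\vert)^{2N}$, but $U^{(N)}_{\varepsilon,\varkappa}-w^{(0)}_\varkappa=\sum_{n=1}^N\varepsilon^{2n}w^{(n)}_\varkappa$, and by \eqref{outer.ee6} the dominant term of this sum is $\sim(\varepsilon/\vert\varkappa\vert)^2\Vert F\Vert_{L^2(Q)}$ and does \emph{not} decrease when $N$ is increased. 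The correct consequence of Theorem~\ref{outer.lem1} is therefore $\bigl\Vert w_{\varepsilon,\varkappa}-w^{(0)}_\varkappa\bigr\Vert_{H^1(Q)}\le C(\varepsilon/\vert\varkappa\vert)^2\Vert F\Vert_{L^2(Q)}$ (as used implicitly in Corollary~\ref{cor2}, where $\vert\varkappa\vert\ge\varepsilon^{1/2}$ makes this $\le C\varepsilon$). On the outer range $\vert\varkappa\vert\ge\varepsilon^\alpha$ this gives only $C\varepsilon^{2(1-\alpha)}$, which is bounded by $C\varepsilon^\alpha$ precisely when $\alpha\le 2/3$. Your proof therefore closes the corollary for $\alpha\in(0,2/3]$; for $\alpha\in(2/3,1)$ the outer-region bound degrades, and the overall rate your argument establishes is $\max(\varepsilon^\alpha,\varepsilon^{2(1-\alpha)})$, optimal at $\varepsilon^{2/3}$. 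This gap is inherited verbatim from the paper's own ``in particular'' statement, so it is not your error of reasoning, but it is a step that fails if checked; the corollary as stated needs either a restriction $\alpha\le 2/3$ or a different argument for the regime $\alpha>2/3$.
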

\section{Spectra of the operators ${\mathcal B}^{\rm hom}_{\varepsilon,\theta}$}
Using the definition of the 
form ${\mathfrak b}^{\rm hom}_{\varepsilon,\theta},$ see Section \ref{homogsection}, we infer that a pair $(c,u)\in{\mathcal H}_0$ is an eigenvector of the operator 
${\mathcal B}^{\rm hom}_{\varepsilon,\theta}$ corresponding to an eigenvalue $\lambda$ if and only if
\beq
A^{\text{hom}}\theta\cdot\theta c\overline{d}+\int_{Q}A_0(\nabla+{\rm i}\varepsilon\theta)u\cdot\overline{(\nabla+{\rm i}\varepsilon\theta)v}=\lambda\int_{Q}(c+u)\overline{(d+v)}\ \ \ \ \ \ \ 
\forall (d, v)\in{\mathcal H}_0.
\eeq{eigenweak}
Setting $v=0$ in (\ref{eigenweak}) with an arbitrary $d\in{\mathbb C}$ yields
\beq
A^{\text{hom}}\theta\cdot\theta c=\lambda\biggl(c+\int_Qu\biggr).
\eeq{thetaid}
Further, setting $d=0$ in (\ref{eigenweak}) with an arbitrary $v\in H^1_0(Q_0)$ yields
\[
\int_{Q}A_0(\nabla+{\rm i}\varepsilon\theta)u\cdot\overline{(\nabla+{\rm i}\varepsilon\theta)v}=\lambda\int_{Q}(c+u)\overline{v},
\]
from which we deduce that either $\lambda\in S_0:=\{\lambda_j\}_{j=0}^\infty,$ the set of eigenvalues of the operator 
${\mathcal A_0}=-\nabla\cdot A_0\nabla$ in $L^2(Q),$ defined by the sesquilinear form 
\[
{\mathfrak a}_0(u,v):=\int_Q A_0\nabla u\cdot\overline{\nabla v},\ \ \ \ \ \ u,v\in H^1_0(Q_0),
\]
on the maximal possible domain $D({\mathcal A}_0),$ or $\lambda\notin S_0$ and 
\beq
u=\lambda c\sum_{j=0}^\infty(\lambda_j-\lambda)^{-1}\Bigl(\int_{Q_0}\varphi_j^*\Bigr)\overline{\varphi_j^*},
\eeq{uexpr}
where $\varphi^*_j(y):=\varphi_j(y)\exp({\rm i}\varepsilon\theta\cdot y),$ $y\in Q,$ and $\varphi_j$ is the eigenfunction of ${\mathcal A}_0$ corresponding to the eigenvalue $\lambda_j,$ $j=0,1,...$. (We assume that the eigenvalues are ordered 
in the order of magnitude $\lambda_0<\lambda_1\le\lambda_2\le...,$ where multiple  eigenvalues are appear the number of times equal to their multiplicity 
and that $\varphi_j,$ $j=0,1,2,...,$ are real-valued and linearly independent.) In the former case
one has $c=0$ and (\ref{thetaid}) implies $\int_{Q}u=0,$ while in the latter case 
$c\in{\mathbb C}$ is arbitrary and by substituting (\ref{uexpr}) into (\ref{thetaid}) one gets
\begin{equation}
A^{\text{hom}}\theta\cdot\theta=\lambda\biggl(1+\lambda\sum_{j=0}^\infty(\lambda_j-\lambda)^{-1}\Bigl\vert\int_{Q_0}\varphi_j^*\Bigr\vert^2\biggr)
\label{betaeq}
\end{equation}
The expression 
\[
\beta(\lambda) :=\lambda\biggl(1+\lambda\sum_{j=0}^\infty(\lambda_j-\lambda)^{-1}\Bigl(\int_{Q_0}\varphi_j\Bigr)^2\biggr),
\]
obtained by setting $\varepsilon\theta=0$ in the right-hand side of (\ref{betaeq}),
appeared in the work \cite{Zhikov2000}, where 
the behaviour of the spectra of the operators ${\mathcal A}^\varepsilon$ was analysed. In particular, our main theorem above 
(Theorem \ref{maintheorem}) implies the result of \cite{Zhikov} on convergence of the spectra of ${\mathcal A}^\varepsilon,$ as follows.

\begin{theorem}
The spectra of the operators ${\mathcal A}^\varepsilon$ converge in the Hausdorff sense to the union of the
set $S_0$ and the set 
\[
\lim_{\varepsilon\to 0}\bigcup_{\theta\in\varepsilon^{-1}Q'}\bigl\{\lambda: \beta(\lambda)=A^{\text{\rm hom}}\theta\cdot\theta\bigr\}=
\bigl\{\lambda: \beta(\lambda)\ge0\bigr\}.
\]
\end{theorem}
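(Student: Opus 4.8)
The plan is to transfer the operator-norm resolvent estimate of Theorem~\ref{maintheorem} to the level of the Gelfand fibres, deduce Hausdorff convergence of the fibre spectra, and then identify the limiting set directly from the dispersion relation~\eqref{betaeq}. As recorded at the start of Section~\ref{justification}, Theorem~\ref{maintheorem} is equivalent to the uniform-in-$\theta$ bound $\bigl\Vert({\mathcal B}_{\varepsilon,\theta}+I)^{-1}-{\mathcal I}({\mathcal B}^{\rm hom}_{\varepsilon,\theta}+I)^{-1}{\mathcal I}^{-1}{\mathcal P}_{\rm f}\bigr\Vert_{L^2(Q)\to L^2(Q)}\le C\varepsilon$. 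Both operators here are bounded and self-adjoint, so their spectra lie within Hausdorff distance $C\varepsilon$; the spectrum of the second equals $\{0\}\cup\{(\mu+1)^{-1}:\mu\in\sigma({\mathcal B}^{\rm hom}_{\varepsilon,\theta})\}$ and that of the first equals $\{0\}\cup\{(\mu+1)^{-1}:\mu\in\sigma({\mathcal B}_{\varepsilon,\theta})\}$, both accumulating only at $0$ since ${\mathcal B}_{\varepsilon,\theta}$ and ${\mathcal B}^{\rm hom}_{\varepsilon,\theta}$ have discrete, unbounded spectra. Hence on every window $[(\Lambda+1)^{-1},1]$ the two resolvent spectra are $C\varepsilon$-close once $\varepsilon$ is small, and applying the inversion $t\mapsto t^{-1}-1$ (bi-Lipschitz there), taking unions over $\theta\in\varepsilon^{-1}Q'$ and closures, and using $\sigma({\mathcal A}^\varepsilon)=\overline{\bigcup\nolimits_{\theta}\sigma({\mathcal B}_{\varepsilon,\theta})}$ from the fibre decomposition of Lemma~\ref{Blochdecomplemma}, I obtain, for each $\Lambda>0$,
\[
{\rm dist}_H\!\Bigl(\sigma({\mathcal A}^\varepsilon)\cap[0,\Lambda],\ \overline{\bigcup\nolimits_{\theta\in\varepsilon^{-1}Q'}\sigma({\mathcal B}^{\rm hom}_{\varepsilon,\theta})}\cap[0,\Lambda]\Bigr)\le C_\Lambda\,\varepsilon .
\]
It therefore remains to identify $\lim_{\varepsilon\to0}\overline{\bigcup\nolimits_{\theta\in\varepsilon^{-1}Q'}\sigma({\mathcal B}^{\rm hom}_{\varepsilon,\theta})}$ in the local Hausdorff sense.

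For $\lambda\notin S_0$ the analysis leading to~\eqref{betaeq} gives $\lambda\in\sigma({\mathcal B}^{\rm hom}_{\varepsilon,\theta})$ iff $\beta_{\varepsilon,\theta}(\lambda)=A^{\rm hom}\theta\cdot\theta$, where $\beta_{\varepsilon,\theta}(\lambda):=\lambda\bigl(1+\lambda\sum_{j\ge0}(\lambda_j-\lambda)^{-1}\bigl\vert\int_{Q_0}\varphi_j^*\bigr\vert^2\bigr)$ and $\varphi_j^*(y)=\varphi_j(y){\rm e}^{{\rm i}\varepsilon\theta\cdot y}$; since $\sum_j\bigl\vert\int_{Q_0}\varphi_j^*\bigr\vert^2=|Q_0|$ and $\varepsilon\theta\in Q'$ stays bounded, $\beta_{\varepsilon,\theta}(\lambda)$ is bounded uniformly in $\varepsilon$ and in $\theta\in\varepsilon^{-1}Q'$ whenever $\lambda$ stays away from $S_0$, is jointly continuous in $(\theta,\lambda)$ there, and $\beta_{\varepsilon,\theta}\to\beta$ as $\varepsilon\theta\to0$. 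To prove $S_0\cup\{\beta\ge0\}\subseteq\liminf_\varepsilon\overline{\bigcup\nolimits_\theta\sigma({\mathcal B}^{\rm hom}_{\varepsilon,\theta})}$ I would argue in three cases. If $\beta(\lambda)=0$, then since $\beta_{\varepsilon,0}=\beta$ one has $\lambda\in\sigma({\mathcal B}^{\rm hom}_{\varepsilon,0})$. If $\beta(\lambda)>0$ and $\lambda\notin S_0$, the continuous function $t\mapsto(A^{\rm hom}e_1\cdot e_1)t^2-\beta_{\varepsilon,te_1}(\lambda)$ is negative at $t=0$ and, by the uniform bound, positive for all $t\ge t_0(\lambda)$ with $t_0(\lambda)$ independent of $\varepsilon$; hence it vanishes at some $t_\varepsilon<t_0(\lambda)$, and for small $\varepsilon$ one has $t_\varepsilon e_1\in\varepsilon^{-1}Q'$, so $\lambda\in\sigma({\mathcal B}^{\rm hom}_{\varepsilon,t_\varepsilon e_1})$. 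Finally, for $\lambda_j\in S_0$ I would pick $\varkappa^*\in Q'$ with $\sum_{k:\lambda_k=\lambda_j}\bigl\vert\int_{Q_0}\varphi_k{\rm e}^{{\rm i}\varkappa^*\cdot y}\bigr\vert^2>0$ — possible because these integrals are entire functions of $\varkappa$ that cannot all vanish identically without forcing $\varphi_k\vert_{Q_0}\equiv0$ — and take $\theta_\varepsilon=\varkappa^*/\varepsilon$; then $\beta_{\varepsilon,\theta_\varepsilon}(\lambda)\to+\infty$ as $\lambda\uparrow\lambda_j$ while $A^{\rm hom}\theta_\varepsilon\cdot\theta_\varepsilon=\varepsilon^{-2}A^{\rm hom}\varkappa^*\cdot\varkappa^*\to\infty$, so the equation $\beta_{\varepsilon,\theta_\varepsilon}(\lambda)=A^{\rm hom}\theta_\varepsilon\cdot\theta_\varepsilon$ has a root $\lambda_\varepsilon\notin S_0$ with $|\lambda_\varepsilon-\lambda_j|=O(\varepsilon^2)$, so $\lambda_j$ lies in the limit.

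The reverse inclusion follows by compactness: if $\lambda_\varepsilon\to\lambda^*\notin S_0$ with $\lambda_\varepsilon\in\sigma({\mathcal B}^{\rm hom}_{\varepsilon,\theta_\varepsilon})$, then $\lambda_\varepsilon\notin S_0$ for small $\varepsilon$ and $\beta_{\varepsilon,\theta_\varepsilon}(\lambda_\varepsilon)=A^{\rm hom}\theta_\varepsilon\cdot\theta_\varepsilon$; the uniform bound forces $\theta_\varepsilon$ bounded, hence (along a subsequence) $\theta_\varepsilon\to\theta^*$ and $\varepsilon\theta_\varepsilon\to0$, and passing to the limit through the uniformly convergent series gives $\beta(\lambda^*)=A^{\rm hom}\theta^*\cdot\theta^*\ge0$. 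Combining the two inclusions yields $\lim_\varepsilon\overline{\bigcup\nolimits_\theta\sigma({\mathcal B}^{\rm hom}_{\varepsilon,\theta})}=S_0\cup\{\lambda:\beta(\lambda)\ge0\}$, which with the first step establishes the asserted Hausdorff convergence of $\sigma({\mathcal A}^\varepsilon)$. For the remaining identity $\lim_\varepsilon\bigcup_{\theta\in\varepsilon^{-1}Q'}\{\lambda:\beta(\lambda)=A^{\rm hom}\theta\cdot\theta\}=\{\lambda:\beta(\lambda)\ge0\}$ I note that this union equals $\{\lambda:\beta(\lambda)\in R_\varepsilon\}$ with $R_\varepsilon:=\{A^{\rm hom}\theta\cdot\theta:\theta\in\varepsilon^{-1}Q'\}$, which is an interval containing $0$ whose upper endpoint tends to $+\infty$; these sets increase to $\{\beta\ge0\}$, and by the same pole argument their Hausdorff limit picks up precisely $S_0$ in addition, so the closed limiting set is $S_0\cup\{\beta\ge0\}$.

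I expect the first step to be the main obstacle: $\mathcal R^\varepsilon$ is not the resolvent of a self-adjoint operator on the whole of $L^2(\mathbb R^d)$ — the projection $\mathcal P$ annihilates a nontrivial subspace — and it depends on $\varepsilon$, so one cannot invoke a ready-made ``norm-resolvent convergence implies spectral convergence'' statement and must instead work fibrewise and only on bounded spectral windows, checking that the accumulation of the resolvent spectra at $0$ causes no interference. The other point requiring care is the behaviour near $S_0$ in the two inclusions above — recovering every eigenvalue of ${\mathcal A}_0$ from the poles of $\beta_{\varepsilon,\theta}$ while ruling out spurious limit points — but this is governed by elementary monotonicity and residue properties of the scalar functions $\beta_{\varepsilon,\theta}$ and presents no serious difficulty.
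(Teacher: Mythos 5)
The paper does not supply a proof of this theorem: it is stated as a corollary of Theorem \ref{maintheorem} immediately after the dispersion relation \eqref{betaeq} is derived, with no proof environment. Your proposal therefore has to stand on its own, and it does. The two halves you identify are exactly what is needed: (i) converting the uniform fibrewise norm bound $\Vert({\mathcal B}_{\varepsilon,\theta}+I)^{-1}-{\mathcal I}({\mathcal B}^{\rm hom}_{\varepsilon,\theta}+I)^{-1}{\mathcal I}^{-1}{\mathcal P}_{\rm f}\Vert\le C\varepsilon$ (recorded at the start of Section \ref{justification}) into Hausdorff closeness of $\sigma({\mathcal A}^\varepsilon)$ and $\overline{\bigcup_\theta\sigma({\mathcal B}^{\rm hom}_{\varepsilon,\theta})}$ on bounded windows; (ii) computing the limit of the latter via the modulated dispersion relation $\beta_{\varepsilon,\theta}(\lambda)=A^{\rm hom}\theta\cdot\theta$. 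Your case analysis for (ii) is careful and correct, including the pole argument that recovers every $\lambda_j\in S_0$ by twisting with a $\varkappa^*$ that makes the residue positive (possible by analyticity since $\varphi_k\vert_{Q_0}\not\equiv0$), and the compactness argument for the reverse inclusion.

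One remark on your expressed worry at the end: the first step is in fact less delicate than you fear. The fibre operator ${\mathcal I}({\mathcal B}^{\rm hom}_{\varepsilon,\theta}+I)^{-1}{\mathcal I}^{-1}{\mathcal P}_{\rm f}$ \emph{is} a bounded self-adjoint operator on $L^2(Q)$: with respect to the orthogonal decomposition $L^2(Q)={\mathcal L}\oplus{\mathcal L}^\perp$ it is block-diagonal, equal to the unitary conjugate ${\mathcal I}({\mathcal B}^{\rm hom}_{\varepsilon,\theta}+I)^{-1}{\mathcal I}^{-1}$ on ${\mathcal L}$ (recall ${\mathcal I}$ is an isometry by construction of the inner product $(\cdot,\cdot)_0$) and zero on ${\mathcal L}^\perp$. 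Hence the standard estimate ${\rm dist}_H(\sigma(A),\sigma(B))\le\Vert A-B\Vert$ for bounded self-adjoint operators applies directly to the two fibre resolvents, giving exactly the uniform $O(\varepsilon)$ Hausdorff closeness with spectra $\{0\}\cup\{(\mu+1)^{-1}:\mu\in\sigma({\mathcal B}_{\varepsilon,\theta})\}$ and $\{0\}\cup\{(\mu+1)^{-1}:\mu\in\sigma({\mathcal B}^{\rm hom}_{\varepsilon,\theta})\}$. The only genuine technicality is the usual one about restricting Hausdorff estimates to windows $[0,\Lambda]$, which you handle in the standard way. So your proof is correct, and it is the natural argument that the paper leaves implicit.
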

 

\section{Two particular examples of the family $A^\varepsilon$}
\label{examplessection}

Here we discuss two model cases included in our analysis that have emerged in the literature. 

\subsection{Classical homogenisation: $Q_0=\emptyset$}

This is the case when $V$ consists of constant functions on $Q.$ 
The inequality (\ref{bp:eq5.1}) trivially holds for $\kappa \neq 0$ and for $\kappa = 0$ takes the form of the usual Poincar\'{e} inequality for functions with zero mean over $Q.$ Clearly, the space ${\mathcal H}_0$ is isometric to ${\mathbb C}$ and the 
operator family ${\mathcal R}_\varepsilon$ consists of just one element, the resolvent of the usual homogenised operator 
\[
{\mathcal A}^{\rm hom}v:=-\nabla\cdot A^{\rm hom}\nabla,
\]
where the matrix $A^{\rm hom}$ is given by (\ref{matrixAhom}). Indeed, in this example the operator family  
${\mathcal B}^{\rm hom}_{\ep, \theta}$ does not depend on $\varepsilon$ and for each specific value of $\varepsilon$ represents $\theta$-components of the direct fibre decomposition of the operator 
${\mathcal A}^{\rm hom}$ treated as an operator with $\varepsilon$-periodic coefficients, i.e.
$$
\mathcal{U}^{-1}_\ep \mathcal{A}^{\text{hom}} \mathcal{U}_\ep = \int_{\ep^{-1} Q'}^\oplus \theta \cdot A^{\text{hom}} \theta \ \mathrm{d}\theta = \int_{\ep^{-1} Q'}^\oplus \mathcal{I} \mathcal{B}^{\text{hom}}_{\ep, \theta} \mathcal{I}^{-1} \ \mathrm{d}\theta.
$$
Hence in this case Theorem 
\ref{maintheorem} recovers the result of Birman and Suslina \cite{BS} regarding the resolvent convergence estimates for classical 
homogenisation in ${\mathbb R}^d.$

\subsection{The ``double porosity" problem: $Q_0\neq\emptyset,$ $A_0\vert_{Q_1}=0$}

 
 
This was considered in the work by Zhikov 
\cite{Zhikov}, where the spectrum of double-porosity problems in ${\mathbb R}^d$ was analysed, following an earlier work 
\cite{Zhikov2000} concerning double-porosity models in bounded domains. 

The paper \cite{Zhikov} 
contains a proof of the strong two-scale convergence of the sequence of solutions $u=u^\varepsilon$ to the problems (\ref{maineq}) to 
the solution $(v_1, v_0)\in {\mathcal H}^{\rm dp}:=H^1({\mathbb R}^d)\times L^2\bigl({\mathbb R}^d,H_0^1(Q_0)\bigr),$ $v_0=v_0(x,y),$ of the problem 
\[
{\mathfrak a}^{\rm dp}\bigl((v_1,v_0),(\varphi_1,\varphi_0)\bigr)+\int_{{\mathbb R}^d}\int_Q(v_1+v_0)\overline{(\varphi_1+\varphi_0)}=\int_{{\mathbb R}^d}\int_Q 
f\overline{(\varphi_1+\varphi_0)}, 
\]
where the form ${\mathfrak a}^{\rm dp},$ with $D({\mathfrak a}^{\rm dp})={\mathcal H}^{\rm dp},$ is given by
\[
{\mathfrak a}^{\rm dp}\bigl((v_1,v_0),(\varphi_1,\varphi_0)\bigr):=\int_{{\mathbb R}^d}A^{\rm hom}\nabla v_1\cdot\overline{\nabla\varphi_1}+\int_{{\mathbb R}^d}\int_Q A_0\nabla_yv_0\cdot\overline{\nabla_y\varphi_0}.
\]
The author of \cite{Zhikov} refers to the operator ${\mathcal A}^{\rm dp}$ generated by ${\mathfrak a}^{\rm db}$ as the 
homogenised operator for the family ${\mathcal A}^\varepsilon$ and proves that the spectra of ${\mathcal A}^\varepsilon$ converge to the spectrum of 
${\mathcal A}^{\rm dp}$ as $\varepsilon\to0.$ For continuous right-hand sides $f$ the strong two-scale convergence result of 
\cite{Zhikov2000} implies that 
\beq
\Bigl\Vert u^\varepsilon-v_1(x)-\widetilde{v}_0\Bigl(x,\frac{x}{\varepsilon}\Bigr)
\Bigr\Vert_{L^2({\mathbb R}^d)}<C\varepsilon,
\eeq{Zhikovestimate}
where 
$\widetilde{v}_0$ is the $Q$-periodic extension of the function $v_0=v_0(x,y)$ after setting it to zero for $y\in Q_1.$
In the estimate (\ref{Zhikovestimate}) the constant $C=C(f)>0$ is independent of $\varepsilon,$ but it can not be replaced by
$C\Vert f\Vert_{L^2({\mathbb R}^d)}$ with a constant $C$ that is independent of both $\varepsilon$ and $f.$ (In other words, there are sequences $f^\varepsilon$ that are bounded in $L^2({\mathbb R}^d)$ and are such that $C(f^\varepsilon)\to\infty$ 
as $\varepsilon\to 0.$)

The estimate (\ref{Zhikovestimate}) can also be written in the form
\beq
\Bigl\Vert ({\mathcal A}^\varepsilon+I)^{-1}f-{\mathcal S}^\varepsilon({\mathcal A}^{\rm dp}+I)^{-1}f\Bigr\Vert_{L^2({\mathbb R}^d)}<C(f)\varepsilon,
\eeq{Zhikovres}
where in the expression $({\mathcal A}^{\rm dp}+I)^{-1}f$ the function $f$ is treated as an element of $L^2({\mathbb R}^d\times Q),$  and the operator ${\mathcal S}^\varepsilon: 
L^2({\mathbb R}^d\times Q)\to L^2({\mathbb R}^d)$ is defined by $({\mathcal S}^\varepsilon u)(x)=u(x,x/\varepsilon),$ $x\in{\mathbb R}^d.$
The inequality (\ref{Zhikovres}), however, can not be upgraded to an operator-norm resolvent type statement, in view of the fact that 
the difference of the corresponding spectral projections on a neighbourhood of any point of the form $(\lambda_\infty+I)^{-1},$ where $\lambda_\infty$ is such that $\beta(\lambda)\to\infty$ as $\lambda\to\lambda_\infty,$ does not go to zero in the operator norm as $\varepsilon\to 0.$ 
(Such points $\lambda_\infty$ are the eigenvalues of the operator ${\mathcal A}_0$ that have at least one eigenfunction with non-zero integral over $Q.$) 
Our estimate (\ref{mainest}) therefore rectifies this drawback and captures the operator-norm resolvent asymptotic behaviour of  the sequence ${\mathcal A}^\varepsilon.$

\section*{Acknowledgements}

This work was carried out at Cardiff University under the financial support of the Leverhulme Trust Grant RPG--167 ``Dissipative and non-self-adjoint problems''.

\end{document}